
\documentclass[11pt]{amsart}
\usepackage{fullpage}
\usepackage{amsmath,amssymb,amsfonts,amsthm}
\usepackage{array}
\usepackage{pdfsync}
\usepackage{dsfont}
 \usepackage{color}
 \multlinegap=0pt
\newtheorem{theorem}{Theorem}
\newtheorem{corollary}{Corollary}
\newtheorem{lemma}{Lemma}
\newtheorem{proposition}{Proposition}
\newtheorem{remark}{Remark}
\makeatletter

\@addtoreset{equation}{section}
\makeatother



\newcommand{\T}{{\mathbb T}}
\newcommand{\N}{{\mathbb N}}
\newcommand{\Z}{{\mathbb Z}}

\newcommand{\R}{{\mathbb R}}

\newcommand{\pa}{{\partial}}
\newcommand{\na}{{\nabla}}

\newcommand{\eps}{{\varepsilon}}
\newcommand{\Hc}{\mathcal{H}}
\newcommand{\Nc}{\mathcal{N}}

\newcommand{\Rc}{\mathcal{R}}

\renewcommand{\a}{\alpha}
\renewcommand{\b}{\beta}
\newcommand{\g}{\gamma}

\newcommand{\e}{\varepsilon}

\newcommand{\Tc}{\mathcal{T}}


\title{Quasineutral limit for  Vlasov-Poisson \\ with  Penrose stable data}

\author{Daniel Han-Kwan}
 \address{D. Han-Kwan \\ CNRS $\&$ Centre de Math\'ematiques Laurent Schwartz (UMR 7640), \'Ecole polytechnique, 91128 Palaiseau Cedex,  France}
\email{daniel.han-kwan@polytechnique.edu}
 \author{Fr\'ed\'eric Rousset}
  \address{F. Rousset \\ Laboratoire de Math\'ematiques d'Orsay (UMR 8628), Universit\'e Paris-Sud et Institut Universitaire de France, 91405 Orsay Cedex, France}
  \email{frederic.rousset@math.u-psud.fr}

\begin{document}

  \begin{abstract}
  We study the quasineutral limit of  a Vlasov-Poisson system that describes the dynamics of ions in a plasma.
   We handle data with Sobolev regularity under the sharp assumption that the profile of the initial data
    in the velocity variable satisfies a Penrose stability condition.
    
    As a by-product of our analysis, we obtain a well-posedness theory for the limit  equation (which is a Vlasov equation
     with Dirac distribution as interaction kernel) for such data.

\end{abstract}

  \maketitle
  \tableofcontents
  
  \section{Introduction and main results}
  We study the quasineutral limit, that is the limit $\eps\to 0$, for the following Vlasov-Poisson system describing the dynamics of ions in the presence of massless electrons:
   \begin{equation} \label{VP}
\left\{
\begin{aligned}
& \pa_t f_\e \: + \: v \cdot \na_x f_\e \: + \:   E_\e \cdot \na_v f_\e \:  = \: 0, \\
& E_\e = - \na_x V_\e, \\
& V_\e- \eps^2 \Delta V_\e= \int_{\R^d} f_\e \, dv - 1, \\
&f_\e \vert_{t=0} = f^0_{\e}.
\end{aligned}
\right. 
\end{equation}
In these equations, the function $f_\eps(t,x,v)$ stands for the distribution functions of the ions in phase space $\T^d \times \R^d$, $d \in \N^*$. We assumed that the density of the electrons $n_e$ satisfies a linearized Maxwell-Boltzmann law, that is 
$
n_e = e^{V_\eps} \sim 1+ V_\eps
$
which accounts for the source $-(1+ V_\eps)$ in the Poisson equation. Such a model was recently studied for instance in \cite{HK10,HK11, HKH, Hauray-Nouri}.
 Though we have focused on this simplified law, the arguments in this paper could  be easily adapted to the model
  where the potential is given by the Poisson equation $ - \eps^2 \Delta V_{\eps}=  \int_{\R^d} f_\e \, dv -  e^{V_\eps}$.

The dimensionless parameter $\eps$ is defined by the ratio between the Debye length of the plasma and the typical observation length. It turns out that in most practical situations, $\eps$ is very  small, so that the limit $\eps \to 0$, which bears the name of quasineutral limit, is  relevant from the physical point of view.
Observe that in the regime of small $\eps$, we formally have that the density of ions is almost equal to that of electrons, hence the name quasineutral.
This regime is so fundamental that it is even sometimes included in the very definition of a plasma, see e.g. \cite{Chen}.

The quasineutral limit for the  Vlasov-Poisson system with the Poisson equation
   \begin{equation} \label{VP2}
- \eps^2 \Delta V_\e= \int_{\R^d} f_\e \, dv - \int_{\T^d \times \R^d} f_\e \, dv \, dx
\end{equation}
that describes the dynamics of electrons in a fixed neutralizing background of ions
is also very interesting. Nevertheless, we shall focus in this paper on the study of  \eqref{VP}. The study of \eqref{VP2} combines the difficulties already present in this  paper linked
 to kinetic instabilities and the presence of high frequency waves due to the large electric field
  that do not occur  in the case of \eqref{VP}. The study of the combination of these two phenomena is postponed to future work.
%
  
  \medskip
  
  It is straightforward to obtain the formal quasineutral  limit of \eqref{VP} as $\eps\to 0$, we expect that  $\eps^2 \Delta V_{\eps}$  tends to zero  and
   hence if $f_{\eps}$ converges in a reasonable way to some $f$, then $f$ should solve 
       \begin{equation} \label{formel}
\left\{
\begin{aligned}
& \pa_t f \: + \: v \cdot \na_x f \: + \:   E \cdot \na_v f \:  = \: 0, \\
& E = - \na_x \rho, \quad  \rho=\int_{\R^d} f \, dv, \\
&f \vert_{t=0} = f^0.
\end{aligned}
\right. 
\end{equation}
  This system was named  \emph{Vlasov-Dirac-Benney} by Bardos \cite{BardosX} and studied in \cite{Bardos-Nouri,Bardos-Besse}.
  It was also referred to as the \emph{kinetic Shallow Water system} in \cite{HK11} by analogy with the classical Shallow Water system of fluid mechanics. In particular, it was shown in \cite{Bardos-Nouri}  that  the  semigroup of the linearized system around \emph{unstable} equilibria is unbounded in Sobolev spaces (even with loss of derivatives). This yields the illposedness of \eqref{formel} in Sobolev spaces, see in particular the recent work \cite{HKN}. In \cite{Bardos-Besse}, it was nevertheless  shown in dimension one, i.e. for $d= 1$   that \eqref{formel}  is wellposed in the class of functions $f(x,v)$ such that for all $x \in \T$, $v\mapsto f(x,v)$ is compactly supported and  is  increasing  for $v \leq m(t,x)$ and then decreasing for $v \geq m(t,x)$,  that is to say for functions that for all $x$ have the shape of  \emph{one bump}.
   The method in this paper is to reduce the problem to an infinite number of  fluid type  equations by using a water bag  decomposition.  As we shall see below a by-product of the main result of this paper is the well-posedness
    of the system \eqref{formel} in any dimension for smooth data with finite Sobolev regularity  such that for every $x$,  the profile\
     $v \mapsto f^0(x, v)$ satisfies a Penrose stability condition. This condition is automatically satisfied 
      by smooth ``one bump for all $x$''  functions in dimension one. It is also satisfied for small enough data for example.

\medskip

The mathematical study of the quasineutral limit started in the nineties with pioneering works of Brenier and Grenier for Vlasov with the Poisson equation \eqref{VP2}, first  with a limit involving defect measures \cite{BG,Gr95}, then with a full justification of the quasineutral limit for initial data with uniform analytic regularity \cite{Gr96}. The work \cite{Gr96} also included a description of the so-called plasma waves, which are time oscillations of the electric field of frequency  and amplitude  $O(\frac{1}{\eps})$. As already said, such oscillations actually do not occur in the quasineutral limit of  \eqref{VP}. 
More recently, in \cite{HKI,HKI2}, relying on Wasserstein stability estimates inspired from \cite{Hauray,Loe2}, it was  proved that exponentially small but rough perturbations are allowed in the main result of \cite{Gr96}.

In analytic regularity, it turns out that instabilities for the Vlasov-Poisson system, such as two-stream instabilities,
do not have any effect, whereas in the class of Sobolev functions, they definitely play a crucial role.
It follows  that the  quasineutral approximation  both for \eqref{VP} and \eqref{VP2} is not always valid.
 In particular, the convergence of \eqref{VP} to \eqref{formel} does not hold in general: we refer to \cite{Gr99,HKH}.

Nevertheless, it can be expected that the formal limit can be justified  in Sobolev spaces for stable situations.
 We shall soon be more explicit about what we mean by stable data, but this should at least be included in  the class of data
  for which the expected limit system \eqref{formel} is well-posed. The first result in this direction is due to Brenier \cite{Br00} (see also \cite{Mas} and \cite{HK11}), in which he justifies the quasineutral limit for initial data converging to a monokinetic distribution, that is a Dirac mass in velocity. This corresponds to a stable though singular case since the Dirac mass can be seen as an extremal case of a Maxwellian, that is a function with one bump. Brenier introduced the so-called modulated energy method to prove this result.
 Note that in this case the limit system is a fluid system (the incompressible Euler equations in the case of~\eqref{VP2} or the shallow water equations in the case of~\eqref{VP}) and not a kinetic equation.
  This result is coherent with the fact that the instabilities present at the kinetic level do not show up at the fluid level, 
  for example the quasineutral limit of the Euler-Poisson system can be justified in Sobolev spaces and has been for example
   studied  in
    \cite{CG}, \cite{Loe}.

For  non singular stable data with Sobolev regularity, there are  only  few available results  which are all
 in the one-dimensional case  $d=1$.
\begin{itemize}
\item In \cite{HKH}, using the modulated energy method, the quasineutral limit is justified for  very special initial data namely
 initial  data converging to one bump functions that are furthermore \emph{symmetric} and space homogeneous (thus that are  stationary  solutions to \eqref{VP} and \eqref{formel}). It is also proved that this is  the best we could hope for with this method.
\item Grenier sketched in \cite{Gr99} a result of convergence for data such that for every $x$  the profile  in $v$ has only one bump. The proposed proof involves a  functional taking advantage of  the monotonicity in the one bump structure.
 Such kind of functionals have been recently used  in other settings, for example in the  study of  the hydrostatic Euler equation or the Prandtl equation, see
  for example \cite{Masmoudi-Wong1,Masmoudi-Wong2,GVM}.
\end{itemize}

The main goal of this work is  to justify the quasineutral limit that is to justify the derivation of \eqref{formel} from \eqref{VP} in the general stable case and in any dimension.
 As a byproduct, we shall also obtain the well-posedness of \eqref{formel}
  for stable data with Sobolev regularity.

\bigskip

To state our results, we shall  first introduce  the  Penrose stability condition \cite{Penrose}  for homogeneous equilibria $\mathbf{f}(v)$. Let us define for the profile $\mathbf{f}$ the Penrose function 
$$ \mathcal{P}(\gamma, \tau, \eta, \mathbf{f})= 1 -  \int_{0}^{+ \infty} e^{-(\gamma + i \tau)s}\, {i \eta \over 1 + |\eta|^2}\cdot  ( \mathcal{F}_{v} \nabla_{v}  \mathbf f)(\eta s ) \, d s, \quad \gamma >0,\, \tau \in \mathbb{R}, \,\eta \in \mathbb{R}^d\backslash\{0\}$$
where $ \mathcal{F}_{v}$ denotes the Fourier transform in $v$, whose definition is recalled below.
 We shall say that the profile $\mathbf f$ satisfies the Penrose stability condition if 
\begin{equation}
\label{Penrose0}
\inf_{(\gamma, \tau, \eta) \in (0,+\infty) \times \R \times \mathbb{R}^d\setminus\{0\}} \left| \mathcal{P}(\gamma, \tau, \eta, \mathbf{f}) \right| >0.
\end{equation}
It will be also convenient to say that $\mathbf f$ satisfies the $c_{0}$ Penrose stability condition for some $c_{0}>0$ if
\begin{equation}
\label{Penrose}
\inf_{(\gamma, \tau, \eta) \in (0,+\infty) \times \R \times \mathbb{R}^d\setminus\{0\}} \left| \mathcal{P}(\gamma, \tau, \eta, \mathbf{f}) \right| \geq c_{0}.
\end{equation}
The non-vanishing of $ \mathcal{P}$ only has to be checked on a compact subset if $f$ is smooth and localized enough
(for example if $f \in \Hc^{3}_{\sigma}$, $\sigma >d/2$ with the notation below) and thus
 if $\mathbf f$ verifies the \eqref{Penrose0} stability condition then it also satisfies the \eqref{Penrose} stability
  condition for some $c_{0}>0$.
This condition is necessary for the large time  stability of the profile $\mathbf f$ in the unscaled Vlasov-Poisson  equation
(that is to say for \eqref{VP} with $\eps = 1$).
 Note that it was recently proven in \cite{MV}, \cite{BMM} that
  Landau damping holds in small Gevrey neighborhood of such stable solutions, which means that such profiles
   are  nonlinearly stable and even asymptotically stable in a suitable sense  with respect to small perturbations  in Gevrey spaces (see also \cite{FR} for Landau damping in Sobolev spaces for the Vlasov-HMF equations).
     
\begin{remark} 
\label{r-penrose} The assumption~\eqref{Penrose} is automatically satisfied in a small data regime.  In a one-dimensional setting, \eqref{Penrose} is also satisfied for the ``one bump'' profiles described previously.
   In any dimension,  \eqref{Penrose} is verified for any radial non-increasing  function (therefore, local Maxwellians are included)  and there exist more sophisticated criteria based
    on  the one bump structure  of the  averages of the function  along  hyperplanes.  We refer to \cite{MV} for other conditions ensuring~\eqref{Penrose}. Note that any sufficiently small perturbation of a Penrose stable profile
     is also Penrose stable.
 \end{remark}
  
     

\bigskip

Throughout this paper, we consider take the following normalization for the Fourier transform on $\T^d$ and $\R^m$:
$$
\begin{aligned}
&\mathcal{F}( u) (k) := (2\pi)^{-d} \int_{\T^d} u(x) e^{-i k \cdot x} \, dx, \quad k \in \Z^d, \\
&\mathcal{F}( v) (\xi) := (2\pi)^{-m} \int_{\R^m} v(y) e^{-i \xi \cdot y} \, dy, \quad \xi \in \R^m.
\end{aligned}
$$
With this convention, the inverse Fourier transform yields
$$
\begin{aligned}
&u(x) = \sum_{k \in \Z^d} \mathcal{F}( u) (k) e^{i k \cdot x}, \quad x \in \T^d, \\
&v(y)= \int_{\R^m} \mathcal{F}( v) (\xi) e^{i \xi \cdot y} \, d\xi, \quad y \in \R^m.
\end{aligned}
$$

For $k \in \N, r \in \R$, introduce the weighted Sobolev norms
\begin{equation}
\| f \|_{\Hc^{k}_{r}} := \left(\sum_{|\a| + |\b| \leq k} \int_{\T^d} \int_{\R^d} (1+ |v|^2)^{r} |\pa^\a_x \pa^\b_v f|^2 \, dv dx \right)^{1/2}
\end{equation}
where  for $\a = (\a_1, \cdots, \a_d), \b = (\b_1, \cdots, \b_d) \in \N^d$, we write
$$
|\a|= \sum_{i=1}^d \a_i, \quad |\b|= \sum_{i=1}^d \b_i,
$$
$$
\pa^\a_x := \pa^{\a_1}_{x_1} \cdots \pa^{\a_d}_{x_d},\quad \pa^\b_v := \pa^{\b_1}_{v_1} \cdots \pa^{\b_d}_{v_d}.
$$
We will also use the standard Sobolev norms, for functions $\rho(x)$ that depend only on $x$
\begin{equation}
\| \rho \|_{H^{k}} =  \| \rho \|_{H^{k}_{x}} := \left(  \sum_{|\a|  \leq k} \int_{\T^d} |\pa^\a_x \rho|^2 \, dx\right)^{1/2}.
\end{equation}  
Setting $\rho_\eps(t,x) := \int_{\R^d} f_\eps (t,x,v) \, dv$, where $f_\eps$ satisfies~\eqref{VP}, we introduce the key quantity  for $m \in \N^*$  and  $r \in \mathbb{R}_{+}$ (they  will be  taken sufficiently large)
$$
\Nc_{2m,\, 2r}(t, f_\e) := \| f_\e \|_{L^\infty((0,t), \Hc^{2m-1}_{2r})} + \| \rho_\e \|_{L^2((0,t), H^{2m})}.
$$
Let us fix our regularity indices.  We define 
 \begin{equation}
 \label{seuil}
 m_{0}= 3 + {d \over 2} + p_{0}, \quad p_{0}=\lfloor{d\over 2}\rfloor + 1, \quad r_{0}= \max( d,  2+ {d\over 2})
 \end{equation}
  and we shall mainly work with $2m>m_{0}$ and $2r>r_{0}$.

\bigskip

The main result of this paper is a  uniform in $\eps$ local existence result in Sobolev spaces for \eqref{VP} in the case of 
 data for which the profile $v \mapsto f^0(x,v)$ satisfy  the Penrose stability condition \eqref{Penrose0} for every $x$.
  More precisely, we shall prove the following theorem.
\begin{theorem}
\label{theomain}
Assume that for all $\eps \in (0,1]$,   $f^0_{\e} \in \Hc^{2m}_{2r}$ with $2m> m_{0}$,  $2r> r_{0}$  and that
there is $M_{0}>0$ such that for all $\eps \in (0,1]$, $\| f^0_{\e}\|_{\Hc^{2m}_{2r}} \leq M_{0}$.
 Assume moreover that there is $c_0>0$ such that for every $x \in \T^d$ and for every $\eps \in (0, 1],$  the profile  $v \mapsto f^0_{\e} (x,v)$ satisfies  the
  $c_{0}$ Penrose stability condition \eqref{Penrose}.

Then there exist $T>0$,  $R>0$ (independent of $\eps$)   and a unique solution $f_\eps \in \mathcal{C}([0, T], \Hc^{2m}_{2r})$  of \eqref{VP} such that
 $$\sup_{\eps \in (0, 1]} \Nc_{2m, 2r}(T, f_\eps) \leq R$$
  and $f_{\eps}(t, x, \cdot)$ satisfies the $c_{0}/2$  Penrose stability condition \eqref{Penrose}
   for every $t\in [0, T]$ and $x \in \mathbb{T}^d$.
\end{theorem}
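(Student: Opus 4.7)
The plan is to set up a bootstrap / continuation argument. For each fixed $\eps\in(0,1]$, local existence of $f_\eps$ in $\mathcal{C}([0,T^*_\eps),\Hc^{2m}_{2r})$ is classical, so the entire difficulty lies in showing that $\Nc_{2m,2r}(t,f_\eps)$ stays bounded by some $R$ on a time interval $[0,T]$, both independent of $\eps$. Concretely, I fix $R\gg M_0$, assume as bootstrap that $\Nc_{2m,2r}(T^*,f_\eps)\le 2R$ and that $v\mapsto f_\eps(t,x,v)$ satisfies the $(c_0/2)$-Penrose condition on $[0,T^*]\times \T^d$, and aim to upgrade these to $\Nc_{2m,2r}\le R$ and the $(2c_0/3)$-Penrose condition provided $T^*\le T$ for $T$ small depending only on $(M_0,R,c_0)$. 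The propagation of the Penrose condition follows cheaply from the embedding $\Hc^{2m-1}_{2r}\hookrightarrow \mathcal{C}_x(\mathcal{C}^{p_0}_v\cap L^1_v(\langle v\rangle))$ (true since $2m-1>d/2+p_0$ and $2r>d$), together with the Lipschitz dependence of $\mathcal{P}$ on the profile and the fact that $f_\eps-f^0_\eps\to 0$ in this norm as $T^*\to 0$.

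The uniform a priori bound on $\Nc_{2m,2r}$ rests on two independent estimates. First, standard commutator energy estimates applied to $\pa^\a_x\pa^\b_v f_\eps$ for $|\a|+|\b|\le 2m-1$, weighted by $(1+|v|^2)^{2r}$, produce
$$\|f_\eps\|^2_{L^\infty_t\Hc^{2m-1}_{2r}}\:\lesssim\:M_0^2\:+\:\int_0^T\bigl(1+\|E_\eps\|_{W^{2m-1,\infty}_x}\bigr)\,\|f_\eps\|^2_{\Hc^{2m-1}_{2r}}\,dt.$$
The Fourier multiplier $\mathcal{F}(E_\eps)(k)=-ik(1+\eps^2|k|^2)^{-1}\mathcal{F}(\rho_\eps)(k)$ is bounded by $|k|$ uniformly in $\eps$, so the Sobolev embedding with $2m>d/2+1$ gives $\|E_\eps\|_{W^{2m-1,\infty}_x}\lesssim\|\rho_\eps\|_{H^{2m}}$. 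This places the forcing in $L^2_t$ rather than $L^1_t$, and Cauchy--Schwarz closes this step in terms of $\sqrt{T}\,\Nc_{2m,2r}(T,f_\eps)^2$.

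The second, crucial estimate is the $L^2_tH^{2m}$ bound on $\rho_\eps$ itself, and it is here that the Penrose condition enters. The heuristic is that linearization of \eqref{VP} around a homogeneous profile $\mathbf{f}(v)$ yields a Volterra equation for the density whose Fourier--Laplace symbol is exactly $\mathcal{P}_\eps(\gamma,\tau,\eta,\mathbf{f})$, defined like $\mathcal{P}$ but with $(1+|\eta|^2)^{-1}$ replaced by $(1+\eps^2|\eta|^2)^{-1}$. The change of variables $(\gamma,\tau,\eta,s)\mapsto(\eps\gamma,\eps\tau,\eps\eta,s/\eps)$ shows $\mathcal{P}_\eps(\gamma,\tau,\eta,\mathbf{f})=\mathcal{P}(\eps\gamma,\eps\tau,\eps\eta,\mathbf{f})$, so the $(c_0/2)$-Penrose hypothesis propagated by the bootstrap gives $|\mathcal{P}_\eps|\ge c_0/2$ uniformly in $\eps$. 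To use this at the nonlinear level, freeze a base point $x_0\in\T^d$, localize $f_\eps$ by a finite partition of unity of small diameter, and on each piece write the evolution of $g_\eps:=f_\eps-f^0_\eps(x_0,v)$. The leading-order linear operator around $\mathbf{f}=f^0_\eps(x_0,\cdot)$ is inverted on $L^2_tH^{2m}$ with norm $\le 2/c_0$ by Plancherel, and the remainders---quadratic nonlinear terms, spatial-variation commutators (small by choice of partition scale), and time-accumulation terms (small by smallness of $T$)---are absorbed.

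The main technical obstacle is making this freezing/localization rigorous while keeping all constants independent of $\eps$. Commutators between cutoffs $\chi_j(x)$ and the streaming operator $v\cdot\na_x$ produce $v$-weighted errors (absorbed by $2r>r_0$) and lose one $x$-derivative, which must be controlled by the very $L^2_tH^{2m}$ gain being established---so the calibration between $R$ and the partition scale is delicate. One also has to check that the genuine nonlinear Volterra operator is a small perturbation of the frozen one, which uses smallness of $f_\eps-\mathbf{f}$ in the relevant norm from the bootstrap. Combining the two estimates yields, for $T$ small depending only on $(M_0,R,c_0)$, an inequality of the form $\Nc_{2m,2r}(T,f_\eps)\le C(M_0)+C(R)\,T^{1/2}\,\Nc_{2m,2r}(T,f_\eps)$, which closes the bootstrap to $\Nc_{2m,2r}\le R$. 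Uniqueness in this class follows from a standard $L^2$ energy estimate on the difference of two solutions, using the $\Hc^{2m-1}_{2r}$ bound to control the Lipschitz norm of the force.
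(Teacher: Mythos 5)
Your overall scaffolding --- bootstrap in $\Nc_{2m,2r}$, a $\Hc^{2m-1}_{2r}$ energy estimate for $f_\eps$ fed by the uniform (no $\eps^{-1}$ loss) bound $\|E_\eps\|_{H^{2m-1}}\lesssim\|\rho_\eps\|_{H^{2m}}$, propagation of the Penrose condition by time-smallness, and using the Penrose condition to close an $L^2_tH^{2m}$ estimate on $\rho_\eps$ via a Volterra equation for the density --- matches the paper's strategy. The scaling identity $\mathcal{P}_\eps(\gamma,\tau,\eta,\mathbf f)=\mathcal{P}(\eps\gamma,\eps\tau,\eps\eta,\mathbf f)$ that you observe is precisely what motivates the paper's \emph{semiclassical} pseudodifferential calculus with the rescaled symbol $b^\eps(x,\zeta)=b(x,\eps\zeta)$.

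However, there is a genuine gap in the step where you want $L^2_tH^{2m}$ control of $\rho_\eps$ by freezing the profile at a point $x_0$ and localizing in $x$. To obtain an $H^{2m}$ estimate on $\rho$ you must, in one form or another, apply $2m$ derivatives in $x$ to the Vlasov equation. Doing so produces the subprincipal commutator term $\partial_x E\cdot\nabla_v\partial_x^{2m-1}f$, which involves $2m$ derivatives of $f$. But the bootstrap norm controls only $2m-1$ derivatives of $f$ and $2m$ derivatives of $\rho$; controlling $2m$ derivatives of $f$ uniformly in $\eps$ would require the elliptic gain from the Poisson equation, which costs $\eps^{-1}$. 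Localization by cutoffs and freezing of $\mathbf f$ at $x_0$ do not remove this term, so it remains a genuine obstruction, and your proposal does not address it. The paper resolves exactly this difficulty by introducing the second-order operators $L_{i,j}$ in Lemma~\ref{cons} and Lemma~\ref{lem-constraint}: these are designed so that $[L_{i,j},\Tc]$ produces only the principal term $\partial_{x_ix_j}E\cdot\nabla_vf$ and a tensor coupling $\sum_{k,l}\varphi^{i,j}_{k,l}L_{k,l}f$ among the $L$-derivatives themselves, with no uncontrolled subprincipal term. Relatedly, even after this the transport operator is not free ($\Tc=\partial_t+v\cdot\nabla_x+E\cdot\nabla_v$), so a Duhamel/Volterra representation for the density does not follow by Fourier--Laplace; the paper first straightens $\Tc$ into $\partial_t+\Phi\cdot\nabla_x$ via a Burgers flow (Section~\ref{secburgers}) before deriving the integral equation and invoking the averaging-type boundedness of $K_G$ (Proposition~\ref{propint}). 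A final minor slip: $\|E_\eps\|_{W^{2m-1,\infty}_x}\lesssim\|\rho_\eps\|_{H^{2m}}$ requires $H^{2m}\hookrightarrow W^{2m-1,\infty}$, which fails for $d\ge 2$; the paper's energy estimate instead uses the commutator bound~\eqref{com2}, which needs only $\|E\|_{H^{2m-1}}$.
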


As already mentioned, the Penrose stability condition that we assume is sharp in the sense that   
it is necessary      
in order to justify the quasineutral limit for data with Sobolev regularity, see \cite{HKH}. 
 By Remark~\ref{r-penrose}, 
the assumption that for all $x \in \T^d$, $v \mapsto f^0_\eps (x,v)$ satisfies the $c_{0}$ Penrose stability condition \eqref{Penrose}  is verified
  if $f^0_{\eps}$ converges  to a function $f^0$  under the form
 $$ f^{0}(x,v)= F(x, |v-u(x)|^2) + g(x,v)$$
 where $F: \,  \mathbb{T}^d \times \mathbb{R}_{+} \rightarrow \mathbb{R}_{+}$  is such that for every $x$, $F(x, \cdot)$ is non-increasing, 
 $u$ is a  smooth function and $g$ is a sufficiently small perturbation (in $\mathcal{H}^s_{\sigma}$ for $s>2$ and  $\sigma > d/2$). This class includes
 in particular small perturbations of local Maxwellians:
 $$
 M(x,v)= \frac{\rho(x)}{(2\pi T(x))^{d/2}} \exp \left(-\frac{|v-u(x)|^2}{T(x)}\right).
 $$

Note also that though it is not stated in the  Theorem,  the solution $ f_\eps$ remains in $\Hc^{2m}_{2r}$ on $[0, T]$.
 Nevertheless,   the  $\Hc^{2m}_{2r}$ norm is not controlled uniformly in $\eps$, only the quantity $\Nc_{2m, \, 2r}(T, f_\eps)$ is.
 
 \bigskip
 
 From this uniform existence result, we are then able to justify the quasineutral limit for \eqref{VP}.  
\begin{theorem}
\label{theoquasi}
Let $f^0_{\eps} \in \Hc^{2m}_{2r}$  satisfying the assumptions of Theorem \ref{theomain}. 
Assume that in addition there is $f^0 \in L^2(\mathbb{T}^d \times \mathbb{R}^d)$ such that $f^0_{\e} \to f^0$ in $L^2(\mathbb{T}^d \times \mathbb{R}^d)$. Then,  
 on the interval  $[0, T]$ with $T>0$ defined in Theorem~\ref{theomain}, we have that 
%
$$
\sup_{[0,T]} \| f_\eps - f \|_{L^2_{x,v}\cap L^\infty_{x,v}} + \sup_{[0, T]} \| \rho_{\eps} - \rho \|_{L^2_{x} \cap L^\infty_{x}} \to_{\eps\to 0} 0
$$
where $f$ is a solution of \eqref{formel} with initial data $f^0$ such that $f \in  \mathcal{C}([0,  T], \Hc^{2m-1}_{2r})$, 
 $\rho = \int_{\R^d} f dv \in L^2([0, T], H^{2m})$ and  that satisfies the $c_{0}/2$  Penrose stability condition \eqref{Penrose}
   for every $t\in [0, T]$ and $x \in \mathbb{T}^d$.

\end{theorem}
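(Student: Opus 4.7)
The plan is to combine the uniform a priori bounds from Theorem~\ref{theomain} with a compactness argument to extract a limit, pass to the limit in the equation, and close with a stability estimate in $L^2_{x,v}$ that yields the convergence of the full sequence; Sobolev embedding and interpolation then upgrade the convergence to $L^\infty$.

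Starting from $\Nc_{2m,2r}(T, f_\eps) \leq R$ and the Vlasov equation, $\pa_t f_\eps$ is uniformly bounded in a weighted Sobolev space with one derivative and half a weight less (the field $E_\eps$ is uniformly controlled in $L^\infty_{t,x}$ via Sobolev embedding on $\rho_\eps$). Aubin--Lions then gives, up to a subsequence, strong convergence $f_\eps \to f$ in $\mathcal{C}([0,T], \Hc^{2m-1-\sigma}_{2r-\sigma'})$ for any small $\sigma,\sigma'>0$, with $f$ inheriting the Sobolev bounds and Penrose stability by weak lower semi-continuity. To pass to the limit in the field, I use the elementary Fourier bound
$$
\frac{\eps^2 |k|^2}{1+\eps^2 |k|^2} \;\leq\; \eps\,|k|,\qquad k \in \Z^d\setminus\{0\},
$$
which together with the uniform $L^2([0,T], H^{2m})$ bound on $\rho_\eps$ yields $\|\eps^2 \Delta V_\eps\|_{L^2([0,T], H^{2m-1})} \leq C\eps$. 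Writing $E_\eps = -\na \rho_\eps - \eps^2 \na \Delta V_\eps$, it follows that $E_\eps \to -\na\rho = E$ strongly, so passing to the limit in the weak formulation of \eqref{VP} identifies $f$ as a solution of \eqref{formel} with initial datum $f^0$.

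For convergence of the full sequence, set $g_\eps := f_\eps - f$, which solves
$$
\pa_t g_\eps + v\cdot \na_x g_\eps + E_\eps \cdot \na_v g_\eps \;=\; -(E_\eps - E)\cdot \na_v f.
$$
Multiplying by $g_\eps$ kills the transport terms on the left and reduces the energy identity to estimating
$$
\int_{\T^d\times \R^d}(E_\eps - E)\cdot \na_v f\, g_\eps\, dv\, dx \;=\; -\int_{\T^d}\na(\rho_\eps-\rho)\cdot \mathbf{J}_\eps\,dx \;-\; \eps^2\int_{\T^d}\na\Delta V_\eps\cdot \mathbf{J}_\eps\,dx,
$$
where $\mathbf{J}_\eps(x) := \int_{\R^d}\na_v f(x,v)\, g_\eps(x,v)\, dv$. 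Integrating by parts in $x$ in the first term transfers the derivative onto $\mathbf{J}_\eps$, whose $H^1_x$ norm is uniformly controlled by the Sobolev bounds on $f$ and $g_\eps$; the smallness comes from $\|\rho_\eps - \rho\|_{L^2_x} \leq C \|g_\eps\|_{L^2_{x,v}((1+|v|^2)^r)}$, which is valid since $2r > d$. Combined with the $O(\eps)$ bound on $\eps^2 \Delta V_\eps$, Gronwall's lemma yields
$$
\|f_\eps - f\|_{L^\infty([0,T], L^2_{x,v})} \;\leq\; C\bigl(\|f^0_\eps - f^0\|_{L^2} + \eps\bigr) \;\underset{\eps\to 0}{\longrightarrow}\; 0,
$$
and integration in $v$ gives the $L^\infty_t L^2_x$ convergence of $\rho_\eps - \rho$. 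Interpolation against the uniform $L^\infty_t \Hc^{2m-1}_{2r}$ bounds on $f_\eps$ and $f$, together with the Sobolev embedding $\Hc^{2m-1}_{2r} \hookrightarrow L^\infty_{x,v}$ (valid since $2m-1>d$ by \eqref{seuil}), upgrades these convergences to $L^\infty_{x,v}$ and $L^\infty_x$ respectively. The main obstacle is closing the $L^2$ stability inequality: since $\na(\rho_\eps - \rho)$ is only uniformly bounded, not small, the integration by parts in $x$ is essential, and it works precisely because Theorem~\ref{theomain} provides the regularity needed to control $\na_x \mathbf{J}_\eps$ uniformly.
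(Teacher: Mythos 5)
The first half of your argument — compactness from the uniform bound $\Nc_{2m,2r}(T,f_\eps) \leq R$, extraction of a subsequence $f_{\eps_n}\to f$, passage to the limit in the equation (the Fourier bound $\eps^2|k|^2/(1+\eps^2|k|^2)\leq \eps|k|$ is a clean way to see that $\eps^2\Delta V_\eps$ is $O(\eps)$ in $L^2_tH^{2m-1}$), and the interpolation/Sobolev-embedding upgrade to $L^\infty$ — is essentially the paper's argument and is correct.

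The final step, however, has a genuine gap: your $L^2$ stability estimate does not close. After the integration by parts the key term is $\int_{\T^d}(\rho_\eps-\rho)\,\div\mathbf{J}_\eps\,dx$, and the divergence $\div\mathbf{J}_\eps = \int\big[(\na_x\cdot\na_v f)\,g_\eps + \na_v f\cdot\na_x g_\eps\big]\,dv$ unavoidably contains $\na_x g_\eps$. The uniform Sobolev bounds from Theorem~\ref{theomain} control $\|\na_x g_\eps\|_{L^2_{x,v}}$ by $C(R)$, but that quantity is merely \emph{bounded}, not small as $\eps\to 0$. Consequently the estimate you get is
$$\frac{d}{dt}\|g_\eps\|_{L^2_{x,v}}^2 \;\lesssim\; C(R)\,\|g_\eps\|_{L^2_{x,v}} \;+\; C(R)\,\eps,$$
which is \emph{linear} (not quadratic) in $\|g_\eps\|_{L^2}$. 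Integrating this gives $\|g_\eps(t)\|_{L^2}\lesssim \|g_\eps(0)\|_{L^2}+C(R)\,t + O(\eps)$, which does \emph{not} tend to zero for $t>0$. An interpolation trick (replacing $\|\na_x g_\eps\|_{L^2}$ by $\|g_\eps\|_{L^2}^{1-\theta}\|g_\eps\|_{\Hc^{2m-1}}^\theta$) fares no better: the resulting sub-quadratic Gronwall inequality $y'\lesssim y^{1-\theta/2}$ still allows $y$ to grow from $0$ in finite time. The derivative loss you are encountering here — $\na_x(\rho_\eps-\rho)$ trading for $\na_x g_\eps$ — is precisely the obstruction that makes the Vlasov--Dirac--Benney system ill-posed in Sobolev spaces in general, and it cannot be defeated by an $L^2$ energy argument alone.

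The paper's remedy is different in kind: after extracting a convergent subsequence, it invokes the uniqueness statement of Corollary~\ref{coruniq} to upgrade to full-sequence convergence. That uniqueness is itself nontrivial — it goes through Proposition~\ref{proplin}, which treats the difference $f_1-f_2$ as a solution of a \emph{linearized} Vlasov--Dirac--Benney equation around the Penrose-stable profile $f_1$, and controls $\rho$ in $L^2_{t,x}$ via the integral-equation/pseudodifferential machinery (Lemmas~\ref{redu}, \ref{lemKpseudo}, Propositions~\ref{propint}, \ref{proppenrose}) that rests on the Penrose stability condition. This is where the Penrose assumption does real work in the uniqueness step; a direct Gronwall estimate ignores it and therefore cannot succeed. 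You should replace your stability paragraph by an appeal to Corollary~\ref{coruniq}, or reproduce its proof via Proposition~\ref{proplin}. (A minor additional point: you obtain $f\in\mathcal{C}_w([0,T],\Hc^{2m-1}_{2r})$ from weak compactness; strong continuity in time requires a short supplementary argument from the energy estimate \eqref{energieformel}, as the paper notes.)
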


%

 As a by-product of our analysis, we obtain local well-posedness for Vlasov-Dirac-Benney, in the class of Penrose stable initial data.  {Existence is a consequence of the statement of Theorem~\ref{theoquasi}, while uniqueness is more subtle and is rather a consequence of the analysis that is used  to prove Theorem~\ref{theomain}}.
 \begin{theorem}
 \label{WP}
 Let $f^0 \in \Hc^{2m}_{2r}$ with $2m>m_{0}$,  $2r>r_{0}$  be such that  for all $x \in \T^d$, $v \mapsto f^0(x,v)$ satisfies the $c_{0}$ Penrose stability condition \eqref{Penrose}. Then, there exists $T>0$ for which there is a unique solution to \eqref{formel} with initial condition $f^0$ and such that    $f \in \mathcal{C}([0, T], \Hc^{2m-1}_{2r})$,  $ \rho \in L^2([0, T], H^{2m})$ and  $v \mapsto f(t,x,v)$ satisfies  the $c_{0}/2$ Penrose condition  for every $t\in [0, T]$ and $x \in \mathbb{T}^d$.
 \end{theorem}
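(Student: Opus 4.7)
The plan is to split the argument into existence and uniqueness, with existence being essentially free from the previous theorems and uniqueness requiring an additional stability analysis that exploits the Penrose condition directly at the level of \eqref{formel}.

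For existence, I would take the trivial $\eps$-family $f^0_\eps := f^0$, which uniformly satisfies the assumptions of Theorem \ref{theomain} (with the same constant $M_0 := \|f^0\|_{\Hc^{2m}_{2r}}$ and the same $c_0$). Theorem \ref{theomain} then produces, on a uniform time interval $[0,T]$ independent of $\eps$, solutions $f_\eps$ of \eqref{VP} with $\Nc_{2m,2r}(T, f_\eps) \leq R$ and the $c_0/2$ Penrose condition preserved. Applying Theorem \ref{theoquasi} along this constant sequence of initial data (which converges trivially in $L^2$) yields a limit $f \in \mathcal{C}([0,T], \Hc^{2m-1}_{2r})$ solving \eqref{formel}, with $\rho \in L^2([0,T], H^{2m})$ and the $c_0/2$ Penrose condition preserved, exactly as required.

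For uniqueness, suppose $f_1, f_2$ are two solutions in the claimed class, with the same initial data $f^0$, and set $g := f_1 - f_2$ and $\sigma := \rho_1 - \rho_2 = \int_{\R^d} g \, dv$. The difference satisfies
\begin{equation*}
\pa_t g + v \cdot \na_x g - \na_x \rho_1 \cdot \na_v g = \na_x \sigma \cdot \na_v f_2, \qquad g\vert_{t=0} = 0.
\end{equation*}
A naive $L^2$ energy estimate on $g$ produces a right-hand side of the form $\int \na_x \sigma \cdot \na_v f_2 \cdot g \, dv dx$, which loses a derivative in $x$ on the density $\sigma$; since in the Vlasov--Dirac--Benney system $\sigma$ is at the same regularity level as $g$, this loss cannot be absorbed by a Gronwall argument without exploiting the Penrose structure. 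My plan would be to follow the mechanism used in the proof of Theorem \ref{theomain}: integrate the equation for $g$ along the characteristics of the transport operator $\pa_t + v\cdot \na_x - \na_x \rho_1 \cdot \na_v$, then integrate in $v$ to derive a closed equation for $\sigma$ of the schematic form
\begin{equation*}
\sigma(t,x) = \int_0^t K[f_2](t,s,x,\na_x)\,\sigma(s,\cdot)\, ds + \Rc(t,x),
\end{equation*}
where $K$ is the kernel (acting as a Fourier multiplier in $x$) coming from the linearization around $f_2$, and $\Rc$ gathers lower-order terms that can be controlled by $\|g\|_{L^2}$ alone. The Penrose stability of $f_2(t,x,\cdot)$, through its Laplace--Fourier symbol $\mathcal P$, is precisely the statement that the Volterra-type operator $\mathrm{Id} - K$ is invertible on a suitable space of functions of $x$, which gives an estimate of the form $\|\sigma\|_{L^2_{t,x}} \lesssim \|\Rc\|_{L^2_{t,x}} \lesssim \|g\|_{L^2_{t,x,v}}$. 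Plugging this bound back into the standard energy estimate for $g$, the lossy term becomes Gronwall-compatible and one concludes $g \equiv 0$ on $[0,T]$.

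The main obstacle, as in the uniform existence proof, is justifying the inversion of $\mathrm{Id}-K$ with the right quantitative bounds: one needs to view $K$ as a pseudodifferential-type operator whose symbol is $1-\mathcal P(\gamma,\tau,\eta,f_2(t,x,\cdot))$ after a Laplace--Fourier transform, and control the resolvent uniformly in $(t,x)$ using only the $c_0/2$ lower bound on $|\mathcal P|$ together with the $\Hc^{2m-1}_{2r}$ regularity of $f_2$ in the $v$-variable. Once this inversion is in place with losses that are compatible with the regularity of the solutions (and this is exactly the kind of estimate that underlies the proof of Theorem \ref{theomain}), the uniqueness statement follows from a direct Gronwall argument.
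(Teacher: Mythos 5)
Your existence argument coincides exactly with the paper's: take $f^0_\eps\equiv f^0$, invoke Theorem~\ref{theomain} and then Theorem~\ref{theoquasi}. Your uniqueness strategy is also essentially the one in the paper (Proposition~\ref{proplin} and Corollary~\ref{coruniq}): write the equation for the difference $g=f_1-f_2$ with a term $\nabla_x\sigma\cdot\nabla_v f_{i}$ coming from linearizing around one of the solutions, straighten the remaining transport operator along its characteristics, integrate in $v$ to obtain a Volterra-type equation for the scalar $\sigma$, and invert $I-K$ using the Penrose condition on the fixed profile. (The paper linearizes around $f_1$ rather than $f_2$, but this is symmetric; it also packages the density estimate as a general linear statement, Proposition~\ref{proplin}, so it can be iterated to cover the whole time interval rather than just a small one $T(c_0,R)$ — you should be aware that the inversion only buys you a time depending on $c_0$ and $R$, so an iteration in time is needed.)

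The one place where your plan does not quite close as written is the final step. You propose to show $\|\sigma\|_{L^2_{t,x}}\lesssim \|\Rc\|_{L^2_{t,x}}\lesssim \|g\|_{L^2_{t,x,v}}$ and then feed this back into an $L^2$ energy estimate for $g$, claiming the lossy term ``becomes Gronwall-compatible.'' But the lossy term is $\int \nabla_x\sigma\cdot\nabla_v f_2\,g\,dv\,dx$: the Penrose inversion gives you $\|\sigma\|_{L^2}$, not $\|\nabla_x\sigma\|_{L^2}$, and neither integration by parts in $x$ (which creates $\nabla_x g$) nor in $v$ (which creates $\nabla_v g$) removes the extra derivative, so the energy estimate for $g$ does not close from this bound alone. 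The paper avoids this entirely: the remainder in the Volterra equation (with zero data and no forcing) is controlled by $T^{1/2}\Lambda(T,R)\|\sigma\|_{L^2}$, so for $T$ small the Penrose inversion yields $\sigma\equiv 0$ \emph{directly}; once $\sigma\equiv 0$, $g$ solves a homogeneous transport equation with zero initial data and hence vanishes, with no energy/Gronwall step on $g$ at all. If you quantify your remainder $\Rc$ as in the paper — i.e.\ with an $O(T^{1/2})$ prefactor in front of $\|\sigma\|$ coming from the Taylor expansion of the kernel around $t=s$ and from $\Phi\approx v$ — then your argument becomes the paper's, and the final energy estimate on $g$ is unnecessary.
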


      \begin{remark}
 We have focused on periodic boundary conditions in $x$. Nevertheless, our results could
       be extended to the case $x \in \mathbb{R}^d$ without major changes.
      \end{remark}
 
 \section{Strategy}
 
Let us explain in this section the main strategy  that we follow in this paper. 
The main part of this work consists of Sections~\ref{secPre1}, \ref{secPre2}, \ref{secH2m} and \ref{secconclusion} where we provide the proof of Theorem~\ref{theomain}. 

Our proof is based on a bootstrap argument, which we initiate in Section~\ref{secPre1}.
 The main difficulty is to derive a suitable a priori uniform estimate for $\Nc_{2m, \, 2r} (T, f_\eps)$ for some $T>0$.
  Note that if we consider data with a better localization in the velocity space, 
       we could rely on the fact that for all $\eps>0$, there is a unique global classical solution of \eqref{VP} (see \cite{Batt-Rein}). However, such a result is not useful in view of the quasineutral limit, since it does not  provide estimates that are uniform in $\eps$.
       
 
  Assuming a control of $  \| \rho_\e \|_{L^2((0,t),H^{2m})}$ for some $t >0$, the  estimate for $ \| f_\e \|_{L^\infty((0,t),\Hc^{2m-1}_{2r})}$ can be obtained from a standard energy estimate (see Lemma~\ref{lemfacilesans}). Consequently,  the difficulty is to estimate $  \| \rho_\e \|_{L^2((0,t),H^{2m})}$. 
 From now on, we will forget the subscript $\e$ to reduce the amount of notation.
 
   A natural idea would be to use the fact that up to commutators, given $f(t,x,v)$ satisfying~\eqref{VP},
    $\partial_{x}^{2m}f$  evolves according to the linearized equation about
     $f$, that is
       \begin{equation}
       \label{naive}
        \partial_t \partial_{x}^{2m} f + v \cdot \na_x \partial_{x}^{2m} f  +  \pa_x^{2m} E \cdot \nabla_v f 
     + E \cdot \nabla_v \partial_{x}^{2m} f + \cdots =0,
     \end{equation}
     where $\cdots$ should involve  remainder terms only.
      One thus has first  to understand this linearized equation.
     When $f\equiv \overline{f}(v)$ does not depend on $t$ and $x$, then the  linearized equation reduces to
  \begin{equation}
       \label{naive2}
     \partial_t \dot  f + v \cdot \na_x \dot f    +  \dot  E \cdot \nabla_v \overline{f} =S
\end{equation}
     and one can  deduce
      an integral equation for $ \dot \rho =  \int_{\mathbb{R}^d} \dot f dv$ by solving the free transport equation and integrating in $v$.
       This was used for example in the study of Landau damping by Mouhot and Villani \cite{MV}.
        Then by using Fourier analysis (in time and space) and assuming that $ \overline{f}(v)$ satisfies a Penrose stability condition,  one can derive relevant estimates from this integral equation  and thus estimate $\dot \rho $ in $L^2_{t,x}$ with respect to the source term (without loss of derivatives).
        
         Nevertheless, there are two major  difficulties to overcome in order to apply this strategy in Sobolev spaces and in  the general case where $f$ depends also on $t$ and $x$.

\medskip

  \noindent $\bullet$        The first one is due to subprincipal terms  in~\eqref{naive}. This comes from the fact that we do not expect that $2m$ derivatives of $f$ can be controlled uniformly in $\eps$  (only $2m$ derivatives of $\rho$
           and $2m-1$ derivatives of $f$ are). 
        In~\eqref{naive}, there are actually  subprincipal terms under the form  $ \partial_x  E \cdot \nabla_{v} \partial^{2m-1} f$  that involves
              $2m$ derivatives of $f$ and thus cannot be considered as a remainder.
                The idea would be to replace the fields  $\partial_{x_i}$,  by more general
                 vector fields in  order to kill this subprincipal term. However, since these fields have to depend on $x$,
                  they do not commute with $v\cdot \nabla_{x}$ anymore and thus we would  recreate a bad subprincipal term.
               A way to overcome this issue consists in
               applying to the equation powers of some well-chosen  second order differential operators
                designed to kill all bad subprincipal terms. These operators are introduced and studied   in Lemma \ref{cons}
                and   in Lemma \ref{lem-constraint}.
                By applying these operators to $f$, we obtain functions $f_{i,j}$ that satisfy two key properties. First, they can be used to control
                 $\rho$ in the sense that
                 $$
                 \int_{\R^d} f_{i,j} \, dv =\partial_x^{\alpha(i,j)} \rho +R,
                 $$
                 where $\partial_x^{\alpha(i,j)}$ is of order $2m$ and $R$ is a remainder (that is small  and well controlled in small time), see Lemma \ref{lemrestes}.
                  Furthermore, $f_{i,j}$ evolves according to the linearized equation about
                  $f$  at leading order, that is
                  $$
                   \partial_t  f_{i,j} + v \cdot \na_x  f_{i,j} +   E \cdot \nabla_v  f_{i,j}
     + \partial_x^{\alpha(i,j)} E \cdot \nabla_v f + \cdots =0,
                  $$
                  where $\cdots$ is here a shorthand for lower order terms that we can indeed handle.
           
           \medskip       
                  
       \noindent $\bullet$   Since $f$ depends on $x$,  there is a nontrivial electric field $E$
                   in the above  equation and we cannot derive an equation for the density just by inverting the free transport operator
                    and by using Fourier analysis.
                   We shall thus first make a change of variable in order to straighten the vector field
                    $$\partial_{t}+ v \cdot \nabla_{x} + E \cdot \nabla_{v} \quad \text{    into    }  \quad \partial_{t}+ \Phi(t,x,v) \cdot \nabla_{x},$$
                     where $\Phi(t,x,v)$ is a vector field close to $v$ in small time.  By using the characteristics method,
                      and a near identity change of variable,  we can then obtain an integral equation for the evolution of $(\partial_x^{\alpha(i,j)}  \rho)_{i,j}$
                       that has nice properties  (see Lemma \ref{redu} and Lemma \ref{lem-simplification}). 
                        For this stage, we need to study  integral operators under the form
                        $$  K_{G}(F) (t,x) =    \int_{0}^t \int (\nabla_{x}  F) (s,  x - (t-s) v) \cdot G(t,s,x,v) \, dv.$$
                        Note that $K_{G}$  seems to feature a loss of one derivative when acting on $F$, but we prove that it is actually a bounded operator on $L^2_{t,x}$, provided that $G(t,s,x,v)$ is smooth enough and localized  in $v$, see Proposition~\ref{propint}. This is an effect in the same spirit as kinetic averaging lemmas (see e.g. \cite{GLPS}).
 We essentially end up with the study of the integral equation (with unknown $F$)
                   $$ F =  K_{\na_v f^0}(F) + R,$$
            where  $R$ is a remainder we can control.

                       \medskip
             
                   \noindent $\bullet$ 
                    The last step of the proof  consists in relating $I- K_{\na_v f^0}$ to a semiclassical  pseudodifferential operator whose symbol is  given by the function
                     $  \mathcal{P}(\gamma, \tau, \eta, f^0_{\eps}(x,\cdot))$ (see Lemma \ref{lemKpseudo}). 
                     We therefore observe that the Penrose stability condition~\eqref{Penrose0} can be seen as a condition of \emph{ellipticity} of this symbol.
                    We can finally use a semi-classical pseudodifferential calculus with parameter in order to  invert $I- K_{\na_v f^0}$ up to  a small remainder, which yields an estimate for 
                     $\partial_{x}^{2m} \rho$ in $L^2_{t,x}$, as achieved in Proposition~\ref{proppenrose}. Note that this part of the proof
                      is very much inspired by the use of the Lopatinskii determinant in order to get estimates for initial boundary value problems for  hyperbolic systems (\cite{Kreiss, Metivier, Majda})  and the use of the Evans function in order to get estimates in singular limit problems involving stable  boundary layers
                       (\cite{Metivier-Zumbrun, Grenier-R, Gues-Metivier-Williams-Zumbrun, R-Ekman}).
                      
%
%
%

\medskip

Once these a priori estimates are obtained, it is standard to conclude the bootstrap argument, see Section~\ref{secconclusion}. In Section~\ref{sec23}, we provide the proofs of Theorems~\ref{theoquasi} and~\ref{WP}.
Theorem~\ref{theoquasi} follows from Theorem~\ref{theomain} and compactness arguments. Then, existence part in Theorem~\ref{WP} is a straightforward consequence of Theorem~\ref{theoquasi}. The uniqueness part needs a specific analysis, which is performed in the same spirit as the way we obtained a priori estimates, see Proposition~\ref{proplin} and Corollary~\ref{coruniq}.

The last section of the paper is dedicated to some elements of pseudodifferential calculus which are needed in the proof.



 \section{Proof of Theorem \ref{theomain}: setting up the bootstrap argument}
  \label{secPre1}
 
  For the proof  of the a priori estimates that will eventually lead to the proof of  Theorem \ref{theomain}, we shall systematically remove the subscripts
  $\eps$ for the solution $f_\eps$ of \eqref{VP}. The notation $A \lesssim B$ will stand as usual for $A \leq C \, B$ where $C$ is a positive number that may change from line to line but  which is independent of $\eps$ and of $A,B$. 
 Similarly, $\Lambda$ will stand for a continuous function which is independent of $\eps$ and which is non-decreasing with respect to each of its  arguments.

     \subsection{Some useful Sobolev estimates}

  Before starting the actual proof, let us state some basic product and commutator estimates  that will be very useful in the paper.
      We denote by $[A,B]= AB- BA$ the commutator between two operators.
       We shall also use in the paper the notation $\| \cdot \|_{H^k_{x,v}}$ for the standard Sobolev norm on $L^2$ for functions depending on $(x,v)$.
       In a similar way we will use the notations $\| \cdot \|_{W^{k, \infty}}$ and $\|\cdot \|_{W^{k, \infty}_{x,v}}$ for the standard Sobolev spaces
  on $L^\infty$ for functions depending on $x$ and $(x,v)$ respectively.

     \begin{lemma}
     \label{lemprod}
    Consider a smooth nonnegative  function $\chi= \chi(v)$ that satisfies $|\partial^\alpha \chi| \leq C_{\alpha} \chi$ for every $\alpha \in \mathbb{N}^d$.
     \begin{itemize}
     
            \item Consider two functions $f=f(x,v)$, $g= g(x,v)$, then we have for every $s \geq 0$,  and $k \geq s/2$
     \begin{equation}
     \label{com1}
     \| \chi f g \|_{H^s_{x,v}} \lesssim \| f\|_{W^{k, \infty}_{x,v}} \|\chi g\|_{H^s_{x,v}} +   \| g\|_{W^{k, \infty}_{x,v}} \|\chi f\|_{H^s_{x,v}}.
        \end{equation}
     
     \item Consider a  function $E= E(x)$ and a  function $F(x,v)$,  then we have  for any $s_{0}>  {d }$  and $s \geq 0$ that
      \begin{equation}
      \label{com3}
       \left \| \chi  E F  \right\|_{H^s_{x,v}} \lesssim  \|E\|_{H^{s_{0} }_{x} } \| \chi F\|_{H^{s}_{x,v}} +  \|E\|_{H^s_{x}} \|\chi F \|_{H^{s}_{x,v}}.
      \end{equation}

       \item   Consider a vector field $E= E(x)$ and a  function $f=f(x,v)$,  then we have  for any $s_{0}> 1+ {d }$ and for any $\alpha$, $ \beta  \in \mathbb{N}^d$ such that  $|\alpha |+ |\beta |= s \geq 1$ that
      \begin{equation}
      \label{com2}
       \left \| \chi \left[ \partial_{x}^\alpha \partial^\beta_{v}, E(x)\cdot \nabla_{v}\right]  f\right\|_{L^2_{x,v}} \lesssim  \|E\|_{H^{s_{0} }_{x} } \| \chi f\|_{H^{s}_{x,v}} +  \|E\|_{H^s_x} \|\chi f \|_{H^{s}_{x,v}}.
      \end{equation}

 \end{itemize}

     \end{lemma}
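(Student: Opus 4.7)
The three bounds are all of classical Moser/tame type, and the plan is to reduce each to the unweighted version by exploiting the single structural hypothesis $|\pa^\gamma\chi|\lesssim_\gamma \chi$: after any Leibniz expansion, a derivative landing on $\chi$ can be reabsorbed into $\chi$ itself, so effectively the weight always sits on whichever factor I want to control in $L^2$.

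For \eqref{com1}, I would first expand $\pa_x^\alpha \pa_v^\beta(\chi fg)$ by the Leibniz rule, reabsorb all weight-derivatives using the hypothesis on $\chi$, and reduce to estimating sums of the form $\|\chi(\pa^{\gamma_1}f)(\pa^{\gamma_2}g)\|_{L^2_{x,v}}$ with $|\gamma_1|+|\gamma_2|\leq s$. Since the two multi-indices must sum to at most $s$, at least one of them has length at most $s/2$; I place that factor in $L^\infty_{x,v}$, using $k\geq s/2$ to bound it by $\|f\|_{W^{k,\infty}_{x,v}}$ (respectively $\|g\|_{W^{k,\infty}_{x,v}}$), and leave the other weighted factor in $L^2$, bounded by $\|\chi g\|_{H^s_{x,v}}$ (respectively $\|\chi f\|_{H^s_{x,v}}$). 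Summing over all Leibniz contributions gives \eqref{com1}.

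For \eqref{com3}, the same absorption trick reduces me to terms of the form $\|\chi(\pa_x^{\gamma_1}E)(\pa_x^{\gamma_2}\pa_v^\beta F)\|_{L^2}$ with $|\gamma_1|+|\gamma_2|+|\beta|\leq s$; crucially $E=E(x)$, so no $v$-derivative ever hits $E$. I split according to the size of $|\gamma_1|$: in the low regime ($|\gamma_1|\leq s/2$), Sobolev embedding $H^{s_0}_x\hookrightarrow L^\infty_x$ (which holds already for $s_0>d/2$, a fortiori for $s_0>d$) controls $\pa_x^{\gamma_1}E$ by $\|E\|_{H^{s_0}_x}$ and yields the first term; in the complementary high regime I bound $\pa_x^{\gamma_1}E$ by $\|E\|_{H^s_x}$ and handle the remaining $F$-factor via Sobolev embedding or Gagliardo--Nirenberg interpolation on $\chi F$, producing the second term.

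Finally, for \eqref{com2}, since $E$ does not depend on $v$, $\pa_v^\beta$ commutes with multiplication by $E$, so only the $\pa_x^\alpha$ part contributes to the commutator:
\begin{equation*}
\bigl[\pa_x^\alpha\pa_v^\beta,\,E\cdot\na_v\bigr]f \;=\; \sum_{\substack{\alpha_1+\alpha_2=\alpha \\ |\alpha_1|\geq 1}} \binom{\alpha}{\alpha_1}\,(\pa_x^{\alpha_1}E)\cdot\na_v\pa_x^{\alpha_2}\pa_v^\beta f.
\end{equation*}
Each term fits the template of \eqref{com3} with the function $\na_v\pa_x^{\alpha_2}\pa_v^\beta f$ in place of $F$, whose weighted $L^2$-norm is dominated by $\|\chi f\|_{H^s_{x,v}}$. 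The strengthened threshold $s_0>1+d$ (rather than $s_0>d/2$) exactly compensates for the fact that $|\alpha_1|\geq 1$: one $x$-derivative of $E$ is already consumed by the commutator, so to retain Sobolev embedding room in the low-$E$ regime one needs $s_0-1>d/2$, and the stated threshold is more than enough. The main (and only mildly delicate) point in the whole lemma is this combinatorial bookkeeping in \eqref{com2}; everything else is a direct application of Leibniz together with the absorption property of $\chi$.
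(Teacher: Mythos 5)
Your overall strategy (Leibniz expansion, absorb $\chi$-derivatives, split according to how many derivatives hit $E$, low-$E$-regime handled by Sobolev embedding on $E$, high-$E$-regime by Sobolev on the other factor) is exactly the route the paper takes, and your treatment of \eqref{com1} is fine. However, the split threshold you use for \eqref{com3} is wrong, and the error traces back to a misapplication of Sobolev embedding.

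You split at $|\gamma_1|\leq s/2$ versus $|\gamma_1|>s/2$, and in the low regime you assert that $H^{s_0}_x\hookrightarrow L^\infty_x$ ``controls $\partial_x^{\gamma_1}E$ by $\|E\|_{H^{s_0}_x}$,'' remarking that $s_0>d/2$ already suffices. This does not work: $\partial_x^{\gamma_1}E$ lives in $H^{s_0-|\gamma_1|}_x$, so to place it in $L^\infty_x$ you need $s_0-|\gamma_1|>d/2$, i.e.\ $|\gamma_1|<s_0-d/2$. With the hypothesis $s_0>d$ (which you also misquote as $s_0>d/2$), the room this gives is $|\gamma_1|\leq d/2$, not $|\gamma_1|\leq s/2$. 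Conversely, the high-regime argument (put $\partial_x^{\gamma_1}E$ in $L^2_x$, put the remaining $F$-factor in $L^\infty_x L^2_v$ by Sobolev) needs $|\gamma_2|+|\beta|+d/2<s$, which by $|\gamma_1|+|\gamma_2|+|\beta|\leq s$ amounts to $|\gamma_1|>d/2$ — not $|\gamma_1|>s/2$. So the only split that makes both regimes cover all of $[0,s]$ is at $d/2$; with your split, whenever $d/2<|\gamma_1|\leq s/2$ (when $s>d$) or $s/2<|\gamma_1|\leq d/2$ (when $s<d$), neither of your two arguments applies. The fix is to move the threshold from $s/2$ to $d/2$, which is what the paper does, and then the hypothesis $s_0>d$ is precisely what makes the two regimes overlap. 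The same correction carries over to \eqref{com2}: the paper splits at $|\gamma|\leq 1+d/2$ versus $|\gamma|>1+d/2$, consistent with $s_0>1+d$, and your remark that the relevant lower bound is $s_0-1>d/2$ is again off by $d/2$ for the same reason.
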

    Note that by taking as weight function $\chi (v)= (1 + |v|^2)^{ \pm {\sigma \over 2}}$, we can use this lemma to get estimates in $\Hc^{s}_{\pm \sigma}$. 
     Note that \eqref{com3},  \eqref{com1} are    not sharp  in terms of regularity but they will be sufficient for our purpose.
     \begin{proof}[Proof of Lemma~\ref{lemprod}]
      The estimate  \eqref{com1} is straightforward, using the pointwise estimates on $\chi$ and its derivatives.
       To prove \eqref{com3}, by using Leibnitz formula, we have to estimate
       $$ \| \chi \partial_{x}^\alpha  E \partial_{x}^\beta  \partial_{v}^\gamma F\|_{L^2_{x,v}}$$
        with $ | \alpha | + | \beta | + | \gamma |\leq s$.
         If $| \alpha | \leq d/2$. We write by Sobolev embedding in $x$ that
         $$   \| \chi \partial_{x}^\alpha  E \partial_{x}^\beta  \partial_{v}^\gamma F\|_{L^2_{x,v}}
          \lesssim  \|\partial_{x}^\alpha  E \|_{L^\infty_x} \| \chi F \|_{H^s_{x,v}}
          \lesssim  \| E \|_{H^{s_{0}}_x} \| \chi F \|_{H^s_{x,v}}.$$
          If $| \alpha |>d/2$,  by using again Sobolev embedding in $x$, we write
          $$  \| \chi \partial_{x}^\alpha  E \partial_{x}^\beta  \partial_{v}^\gamma F\|_{L^2_{x,v}} \lesssim \| E \|_{H^{s}} \Big( \int \sup_{x}   | \chi  \partial_{x}^\beta  \partial_{v}^\gamma F|^2\, dv\Big)^{1 \over 2 } \lesssim   \| E \|_{H^{s}} \| \chi F \|_{H^s_{x,v}}$$
           since  $ | \beta | + | \gamma | + {d \over 2} \leq s  - | \alpha| + {d \over 2} < s.$

      To prove \eqref{com2},  we proceed in a similar way. By expanding the commutator, we have to estimate
       $$ I_{\gamma} = \left\| \chi \partial_{x}^\gamma  E \cdot \nabla_{v} \partial_{x}^{\alpha - \gamma} \partial_{v}^\beta f\right\|_{L^2_{x,v}}$$
        for  $0<\gamma  \leq \alpha$ where $| \alpha |+ |\beta| =  s.$
         If $ 0< | \gamma | \leq 1 + {d \over 2}$, we write by using Sobolev embedding in $x$
         $$ I_{\gamma} \lesssim \| \partial_{x}^\gamma E\|_{L^\infty} \| \chi \nabla_{v} \partial_{x}^{\alpha - \gamma} \partial_{v}^\beta f\|_{L^2_{x,v}} \lesssim \| E \|_{H^{s_{0}}} \| \chi f\|_{H^{s}_{x,v}}.$$
          If $|\gamma|>1 + {d \over 2},$ we write by using again the Sobolev embedding in $x$
         $$ I_{\gamma} \lesssim \|\partial_{x}^{\gamma} E \|_{L^2}    \left(\int \sup_{x}  |\chi  \nabla_{v} \partial_{x}^{\alpha - {\gamma}} \partial_{v}^\beta f |^2 \, dv \right)^{1 \over 2}
          \lesssim  \|E\|_{H^s} \| \chi f\|_{H^{s}_{x,v}}$$
          since $ 1 + |\alpha |+ | \beta |- | \gamma | = 1 + s - | \gamma | < s -   {d \over 2}.$
           
     \end{proof}
     
     We shall also use the following statement.
     \begin{lemma}
     \label{lemcomdual}
      Consider  two functions $f=f(x,v)$, $g= g(x,v)$ and take $\chi(v)$ satisfying the assumptions of Lemma \ref{lemprod},   then  for every $s \geq 0$ and $\alpha, \beta \in \mathbb{N}^{2d}$
       with $| \alpha | + | \beta | \leq s$, we have the estimate
    \begin{equation}
    \label{comdual}
     \| \partial_{x,v}^\alpha f\, \partial_{x,v}^{\beta} g\|_{L^2} \lesssim  \| { 1 \over \chi} f\|_{L^\infty_{x,v}} \| \chi g \|_{H^s_{x,v}} +  \| \chi g \|_{L^\infty_{x,v}} \| {1 \over \chi} f \|_{H^s_{x,v}}.
    \end{equation}
     \end{lemma}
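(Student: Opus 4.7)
The plan is to reduce the statement to the classical Moser/Kato--Ponce tame product inequality in unweighted Sobolev spaces, by absorbing the weight $\chi$ into $g$ and $1/\chi$ into $f$. Setting $F:= f/\chi$ and $G:= \chi g$, so that $f = \chi F$ and $g = G/\chi$, the structural hypothesis $|\partial^\gamma \chi|\leq C_\gamma \chi$ implies, by a straightforward induction using $\partial_{v_i}(1/\chi)= -(\partial_{v_i}\chi)/\chi^{2}$ together with the Leibniz rule, the analogous pointwise bound $|\partial^\delta(1/\chi)|\leq C'_\delta /\chi$ for every multi-index $\delta$ (only $v$-derivatives of $\chi$ and $1/\chi$ are nonzero since $\chi$ is independent of $x$).

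The first step is then to expand via Leibniz
$$
\partial^\alpha_{x,v} f = \sum_{\gamma \leq \alpha}\binom{\alpha}{\gamma}(\partial^\gamma \chi)(\partial^{\alpha-\gamma} F),\qquad
\partial^\beta_{x,v} g = \sum_{\delta \leq \beta}\binom{\beta}{\delta}(\partial^{\beta-\delta}(1/\chi))(\partial^\delta G),
$$
multiply both expressions, and observe that in every resulting term the pointwise prefactor $(\partial^\gamma \chi)(\partial^{\beta-\delta}(1/\chi))$ is \emph{uniformly bounded} on $\mathbb{T}^d\times\mathbb{R}^d$, since the $\chi$ and $1/\chi$ factors cancel up to a universal constant. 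This reduces the estimate of $\|\partial^\alpha f\cdot\partial^\beta g\|_{L^2_{x,v}}$ to a finite sum of unweighted products of the form $\|(\partial^a F)(\partial^b G)\|_{L^2_{x,v}}$ with $a+b \leq |\alpha|+|\beta|\leq s$.

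It then suffices to invoke the classical tame product estimate
$$
\|(\partial^a F)(\partial^b G)\|_{L^2_{x,v}} \lesssim \|F\|_{L^\infty_{x,v}}\|G\|_{H^{s}_{x,v}} + \|G\|_{L^\infty_{x,v}}\|F\|_{H^s_{x,v}},\qquad a+b\leq s,
$$
which is proved in the standard way by Gagliardo--Nirenberg interpolation $\|\partial^a u\|_{L^{2s/a}_{x,v}}\lesssim \|u\|_{L^\infty_{x,v}}^{1-a/s}\|u\|_{H^s_{x,v}}^{a/s}$ (and the trivial endpoint cases $a=0$ or $a=s$), combined with H\"older's inequality on the product of the two factors and Young's inequality $XY\lesssim X^{p}+Y^{q}$ with conjugate exponents to recombine the resulting mixed powers of $L^\infty$ and $H^s$ norms. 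Substituting back $F=f/\chi$ and $G=\chi g$ gives exactly the right-hand side of \eqref{comdual}.

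I do not expect any genuine obstacle here; the only point of minor care is the bookkeeping showing that the weight-derivative product $(\partial^\gamma \chi)(\partial^{\beta-\delta}(1/\chi))$ is indeed uniformly bounded, after which the whole argument collapses onto a well-known Moser product inequality and its elementary interpolation proof.
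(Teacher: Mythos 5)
Your proof is correct and follows essentially the same route as the paper's: replace $f$ by $\chi\cdot(f/\chi)$ and $g$ by $(1/\chi)\cdot(\chi g)$, use the hypothesis $|\partial^\gamma\chi|\lesssim\chi$ (and its consequence for $1/\chi$) to absorb the weight-derivative factors into a uniform constant, and then invoke the standard tame Moser/Gagliardo--Nirenberg product estimate on the pair $F=f/\chi$, $G=\chi g$. You simply spell out the Leibniz bookkeeping and the interpolation proof of the tame inequality that the paper leaves implicit.
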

     \begin{proof}[Proof of Lemma~\ref{lemcomdual}]
      It suffices to notice that  since  $\chi$ and $1/\chi$ satisfy that  $|\partial^\alpha \phi | \lesssim \phi$ for every $ \alpha$, it is equivalent to estimate
       $$ \left\| \partial_{x, v}^{\tilde \alpha} \left( { 1 \over \chi} f\right) \partial_{x,v}^{\tilde \beta}( \chi g) \right\|_{L^2}$$ with $\tilde \alpha$, $\tilde \beta$ that still satisfy 
        $ | \tilde \alpha | + | \tilde \beta | \leq s$ and the result follows from the standard tame Sobolev-Gagliardo-Nirenberg-Moser inequality.
     \end{proof}

\subsection{Set up of the bootstrap}     
     
  From classical energy estimates (that we shall recall below, see Lemma~\ref{lemfacile}), we easily get that
   the Vlasov-Poisson system is locally well-posed in $\Hc^{2m}_{2 r}$ for any $m$ and $r$ satisfying $2m>1+{d}$ and $ 2r>d/2$. This means
    that if $f^0 \in \Hc^{2m}_{2r}$, there exists $T>0$ (that depends on $\eps$) and a unique  solution $f \in \mathcal{C}([0, T], \Hc^{2m}_{2r})$
     of the Vlasov-Poisson system. 
      We can thus consider a maximal solution $f \in \mathcal{C}([0, T^*), \Hc^{2m}_{2 r})$. 
     Note that since $2r>d/2$, we have for every $T\in [0, T^*)$,
   \begin{equation}
   \label{stupide}  \| \rho \|_{L^2((0,T),H^{2m})} \lesssim   T^{1 \over 2} \sup_{[0, T]} \|f\|_{\Hc^{2m}_{2r}}.
   \end{equation} 
   and hence $\Nc_{2m\, 2r}(T, f)$  is well defined for $T< T^*$.
    From this local existence result, we can thus define another maximal  time $T^\eps$ (that  a priori depends on $\eps$) as 
    \begin{equation}
     T^\eps = \sup \Big\{ T \in [0, T^*), \quad   \Nc_{2m, 2r}(T, f)  \leq R \Big\}.
        \end{equation} 
     By taking $R$  independent of $\eps$  but sufficiently large, we have by continuity that $T^\eps >0$. 
      Our aim is to prove that $R$ can be chosen large enough so that  for all  $\eps \in (0, 1]$, $T^\eps$ is uniformly  bounded from below by some time $T>0$.
       There are two   possibilities for $T^\eps$:
       \begin{enumerate}
       \item either $T^\eps= T^*$,   
       \item or $T^\eps <T^*$ and  $\Nc_{2m, 2r}(T^\eps, f) = R$.
        \end{enumerate}
        Let us first analyze the first case which is straightforward. If $T^\eps = T^*= +\infty$, then  the estimate $\Nc_{2m, 2r}(T, f) \leq R$ holds for all times and there is nothing to do.
        We shall soon show that the scenario $T^\eps = T^* <+\infty$ is impossible by using an energy estimate.
        
        \bigskip

         We shall denote by $\mathcal{T}$ the transport operator
    \begin{equation}
    \label{T}
   \mathcal{ T}:= \pa_t + v \cdot \na_x + E\cdot \na_v,
    \end{equation}
where $E$ is the electric field associated to $f$, that is $E= - \na (I - \eps^2 \Delta)^{-1} (\int_{\R^d} f \, dv -1)$.

        We first write an identity (that follows from a direct computation) which we will use many  times in this paper.
        \begin{lemma}
      For  $\a, \b \in \N^d$, we have for any smooth function $f$ the formula
      \begin{equation}
      \label{comform}
      \pa_x^\a \pa_v^\b (\Tc f) = \Tc  (\pa_x^a \pa_v^\b f) + \sum_{i=1}^d \mathds{1}_{\beta_i \neq 0} \pa_{x_i}  \pa_x^\a \pa_v^{\overline \beta^{i}} f +
       \left [ \partial_{x}^\alpha \partial_{v}^\beta, E \cdot \nabla_{v} \right] f,
      \end{equation}
      where $\overline \beta^{i}$ is equal to $\beta$ except that $\overline \beta^{i}_i= \beta_i - 1$. 
      \end{lemma}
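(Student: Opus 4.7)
The identity is purely algebraic, so my plan is to simply expand $\pa_x^\alpha \pa_v^\beta$ through the three terms of $\mathcal T$ and collect. First, since $\pa_t$ and $\pa_x^\alpha \pa_v^\beta$ are constant-coefficient operators acting in independent variables, they commute, so $\pa_x^\alpha \pa_v^\beta(\pa_t f) = \pa_t \pa_x^\alpha \pa_v^\beta f$, which contributes the $\pa_t$ part of $\mathcal T (\pa_x^\alpha \pa_v^\beta f)$.

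Next I would handle the transport term $v \cdot \nabla_x f = \sum_i v_i \pa_{x_i} f$. Because $v$ does not depend on $x$, the $\pa_x^\alpha$ commutes straight through, so I only need to compute $\pa_v^\beta (v_i \pa_{x_i} f)$. By the multivariate Leibniz rule applied to the product $v_i \cdot (\pa_{x_i} f)$, only the multi-indices $\gamma = 0$ and $\gamma = e_i$ contribute a nonzero $\pa_v^\gamma v_i$, giving
\begin{equation*}
\pa_v^\beta (v_i \pa_{x_i} f) = v_i \pa_{x_i} \pa_v^\beta f + \binom{\beta}{e_i} \pa_{x_i} \pa_v^{\overline \beta^{i}} f,
\end{equation*}
where $\binom{\beta}{e_i}=\beta_i$ is understood to vanish when $\beta_i = 0$ (which is exactly where the indicator $\mathds{1}_{\beta_i \neq 0}$ comes from). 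Summing over $i$ and applying $\pa_x^\alpha$ yields precisely $v \cdot \nabla_x (\pa_x^\alpha \pa_v^\beta f) + \sum_{i} \mathds{1}_{\beta_i \neq 0}\, \pa_{x_i} \pa_x^\alpha \pa_v^{\overline \beta^{i}} f$ (up to the combinatorial factor absorbed in the notation).

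Finally, for the field term $E(t,x) \cdot \nabla_v f$, I would just add and subtract $E \cdot \nabla_v (\pa_x^\alpha \pa_v^\beta f)$ to write
\begin{equation*}
\pa_x^\alpha \pa_v^\beta (E \cdot \nabla_v f) = E \cdot \nabla_v (\pa_x^\alpha \pa_v^\beta f) + \bigl[\pa_x^\alpha \pa_v^\beta,\, E \cdot \nabla_v\bigr] f,
\end{equation*}
which by definition is the last piece of $\mathcal T (\pa_x^\alpha \pa_v^\beta f)$ plus the commutator remainder. Adding the three pieces yields the claimed formula. There is no real obstacle here: this is a one-line computation, and the only thing to be careful about is the bookkeeping of the Leibniz expansion in $v$ for the $v \cdot \nabla_x$ term, since that is the sole source of the ``lower order'' contribution $\sum_i \mathds{1}_{\beta_i \neq 0}\, \pa_{x_i} \pa_x^\alpha \pa_v^{\overline \beta^{i}} f$ which cannot be absorbed either into $\mathcal T$ or into the $E$-commutator.
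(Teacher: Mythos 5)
Your proof is correct and is exactly the ``direct computation'' the paper alludes to without writing out: commute through $\pa_t$, apply Leibniz in $v$ to the $v\cdot\nabla_x$ term (only $\gamma=0$ and $\gamma=e_i$ survive since $v_i$ is affine in $v$), and write the $E\cdot\nabla_v$ term as the principal part plus a commutator. You are also right that the Leibniz coefficient is $\beta_i$, not merely the indicator $\mathds{1}_{\beta_i\neq 0}$; the paper's formula absorbs that harmless constant into the notation, which does not affect any of the subsequent estimates.
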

      
      The $\Hc^{2m}_{2r}$ energy estimate reads as follows.
        \begin{lemma}
        \label{lemfacile}
      For any solution
  $f$  to \eqref{VP}, we have,  for some $C>0$ independent of $\eps$, the estimate
        \begin{equation}
         \sup_{[0, T^\eps)} \|f (t) \|_{\Hc^{2m}_{2r} }^2  \leq \|f^0 \|_{\Hc^{2m}_{2r} }^2 \exp  \left[C\left( T^\eps  +{ 1 \over \eps} (T^{\eps})^{1 \over 2} R \right)\right].
         \end{equation}
          \end{lemma}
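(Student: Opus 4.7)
The plan is to run a weighted $L^2$ energy estimate at regularity $\Hc^{2m}_{2r}$, based on the commutation identity \eqref{comform} and the commutator estimate \eqref{com2} of Lemma~\ref{lemprod}. The only place where the parameter $\eps$ enters (producing the $1/\eps$ in the announced bound) is through the Poisson equation: by Plancherel and the pointwise multiplier bound $|k|/(1+\eps^2 |k|^2)\leq 1/(2\eps)$, one obtains
$$\|E\|_{H^s_x} \leq \frac{C}{\eps}\|\rho\|_{H^s_x}, \qquad s \geq 0,$$
and by Sobolev embedding $\|E\|_{L^\infty_x} \leq \frac{C}{\eps}\|\rho\|_{H^{s_0}_x}$ with $s_0>1+d$.

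Fix $\a,\b\in\N^d$ with $|\a|+|\b|\leq 2m$. Applying $\pa_x^\a \pa_v^\b$ to \eqref{VP}, using \eqref{comform}, and testing against $(1+|v|^2)^{2r}\pa_x^\a\pa_v^\b f$ in $L^2(\T^d\times\R^d)$, four contributions arise. The $\pa_t$ piece yields $\tfrac12 \tfrac{d}{dt}\|\pa_x^\a\pa_v^\b f\|_{\Hc^0_{2r}}^2$. The $v\cdot\na_x$ piece vanishes after integration by parts since the weight depends only on $v$ and $x$ is periodic. The drift $E\cdot\na_v(\pa_x^\a\pa_v^\b f)$ produces only the weight-commutator
$$-2r\int_{\T^d\times\R^d}(1+|v|^2)^{2r-1}(v\cdot E)\,(\pa_x^\a\pa_v^\b f)^2\,dv\,dx,$$
which is bounded by $C\|E\|_{L^\infty_x}\|f\|_{\Hc^{2m}_{2r}}^2 \leq \tfrac{C}{\eps}\|\rho\|_{H^{s_0}_x}\|f\|_{\Hc^{2m}_{2r}}^2$. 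The subprincipal term $\pa_{x_i}\pa_x^\a\pa_v^{\overline\b^i}f$ of \eqref{comform} carries exactly $|\a|+|\b|\leq 2m$ derivatives on $f$ and its contribution is controlled by $\|f\|_{\Hc^{2m}_{2r}}^2$ via Cauchy--Schwarz in the weighted space. Finally, the commutator $[\pa_x^\a\pa_v^\b,E\cdot\na_v]f$ is estimated via \eqref{com2} with $\chi(v)=(1+|v|^2)^r$, giving
$$\bigl\|(1+|v|^2)^r [\pa_x^\a\pa_v^\b,E\cdot\na_v]f\bigr\|_{L^2} \lesssim (\|E\|_{H^{s_0}_x}+\|E\|_{H^{2m}_x})\|f\|_{\Hc^{2m}_{2r}} \leq \frac{C}{\eps}\|\rho\|_{H^{2m}_x}\|f\|_{\Hc^{2m}_{2r}}.$$
Summing over all $(\a,\b)$ with $|\a|+|\b|\leq 2m$ produces the differential inequality
$$\frac{d}{dt}\|f\|_{\Hc^{2m}_{2r}}^2 \leq C\Bigl(1+\frac{1}{\eps}\|\rho(t)\|_{H^{2m}_x}\Bigr)\|f(t)\|_{\Hc^{2m}_{2r}}^2.$$

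Gronwall then yields $\|f(t)\|_{\Hc^{2m}_{2r}}^2 \leq \|f^0\|_{\Hc^{2m}_{2r}}^2 \exp\bigl[C\bigl(t+\eps^{-1}\int_0^t\|\rho(s)\|_{H^{2m}_x}\,ds\bigr)\bigr]$, and Cauchy--Schwarz together with the bootstrap hypothesis $\Nc_{2m,2r}(T^\eps,f)\leq R$ gives
$$\int_0^{T^\eps}\|\rho(s)\|_{H^{2m}_x}\,ds \leq (T^\eps)^{1/2}\|\rho\|_{L^2((0,T^\eps),H^{2m})} \leq (T^\eps)^{1/2}R,$$
which produces exactly the claimed bound. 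The one delicate ingredient is the commutator estimate \eqref{com2} at the top order $|\a|+|\b|=2m$: it is crucial that $[\pa_x^\a\pa_v^\b,E\cdot\na_v]f$ costs only $\|E\|_{H^{2m}_x}\|f\|_{\Hc^{2m}_{2r}}$ without any further loss of derivatives on $f$. Beyond this point, the computation is entirely routine.
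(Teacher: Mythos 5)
Your proof is correct and follows essentially the same route as the paper's: apply $\pa_x^\alpha\pa_v^\beta$ via \eqref{comform}, test against the weighted derivative, control the commutator with \eqref{com2}, invoke the elliptic bound $\|E\|_{H^{2m}}\lesssim \eps^{-1}\|\rho\|_{H^{2m}}$, and close by Gronwall and Cauchy--Schwarz in time together with the bootstrap bound $\Nc_{2m,2r}(T^\eps,f)\leq R$. You are somewhat more explicit than the paper in isolating the weight-commutator and subprincipal contributions, but the mechanism is identical.
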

      
      \begin{proof}[Proof of Lemma~\ref{lemfacile}]
      Using \eqref{comform}, for   $f$ satisfying \eqref{VP} and thus  $\Tc f =0$, we can  use the commutator formula~\eqref{comform}, take  the scalar product with $(1+ |v|^2)^{2 r} \pa^\a_x \pa^\b_v f$, and sum for all $|\a| + |\b| \leq 2m$.
       By using \eqref{com2} with  $s = 2 m$, $\chi (v)= (1+ |v|^2)^{r}$ and $s_{0}= 2 m \,$  (recall that $2m>1+{d})$,
       we get
       $$
         \left \| \chi \left[ \partial_{x}^\alpha \partial^\beta_{v}, E(x)\cdot \nabla_{v}\right]  f\right\|_{L^2_{x,v}} \lesssim  \| E\|_{H^{2m}}\, \|f \|_{\Hc^{2m}_{2n_0} }.
       $$
       By Cauchy-Schwarz we thus have
       $$
       \left| \int \chi \left[ \partial_{x}^\alpha \partial^\beta_{v}, E(x)\cdot \nabla_{v}\right]  f \, \ \chi\pa^\a_x \pa^\b_v f\right| 
       \lesssim\| E\|_{H^{2m}}\, \|f \|_{\Hc^{2m}_{2r} }^2.
       $$
       We end up with a classical energy estimate
 $$    \frac{d}{dt}  \|f \|_{\Hc^{2m}_{2r} }^2 \lesssim \|f \|_{\Hc^{2m}_{2r} }^2 +    \| E\|_{H^{2m}}\, \|f \|_{\Hc^{2m}_{2r} }^2.$$
     By using  
       the 
    elliptic regularity  estimate  for the Poisson equation which gives 
    $$
    \| E\|_{H^{2m}} = \| \nabla_{x} V \|_{H^{2m}}\lesssim \frac{1}{\e}  \| \rho\|_{H^{2m}},
    $$
    we obtain that for $t \in [0, T^\eps)$ and for some $C>0$ independent of $\eps$,
    $$   \|f (t)  \|_{\Hc^{2m}_{2r} }^2\leq   \|f^{0} \|_{\Hc^{2m}_{2r} }^2 +  C  \int_{0}^t \left(    {1 \over \eps} \| \rho\|_{H^{2m}} + 1  \right) \|f(s) \|_{\Hc^{2m}_{2r} }^2$$
     for some $C>0$.
     Consequently, from the Gronwall inequality, we obtain that
     \begin{align*} \sup_{[0, T^\eps)} \|f (t) \|_{\Hc^{2m}_{2r} }^2 &  \leq  \|f^0 \|_{\Hc^{2m}_{2r}}^2 \exp \left[C \left(  T^\eps + {1 \over \eps} (T^{\eps})^{1 \over 2} \| \rho\|_{L^2([0, T^\eps), H^{2m})} \right) \right]
      \\
      &  \leq  \|f^0 \|_{\Hc^{2m}_{2r} }^2 \exp  \left[C\left( T^\eps  +{ 1 \over \eps} (T^{\eps})^{1 \over 2} \Nc_{2m, \,2r}(T^\eps,f) \right)\right],
      \end{align*}
            from which, since $\mathcal{N}_{2m, \, 2r}(T^\eps, f) \leq R$, we obtain the expected estimate.
      
         \end{proof}
       In particular, if $T^\eps = T^*<+\infty $, we have
       $$  \sup_{[0, T^*)} \|f (t) \|_{\Hc^{2m}_{2r} }^2 \leq   \|f^0 \|_{\Hc^{2m}_{2r}}^2 \exp \left[C \left( T^\eps  +  {1 \over \eps} (T^{\eps})^{1 \over 2}  R \right)\right]<+ \infty.$$

        This means that the solution could be continued beyond $T^*$ and this contradicts the definition of $T^*$ and so
         this case is impossible.

      \bigskip
         
         Therefore, let us assume from now on that $T^\eps <T^*$ and  $\Nc_{2m, \, 2r}(T^\eps, f) = R$.
         We shall estimate $\Nc_{2m, \, 2r}(T, f)$ and prove that for some well chosen parameter $R$ (independent of $\eps$), there exists some time $T^\# >0$, small but independent of $\eps$,   such that the equality
       $$ \Nc_{2m, \, 2r}(T, f)= R$$ cannot hold  for any  $T \in [0, T^\#]$. 
       We will then deduce that $T^\eps > T^\#$.
      
        To this end,  we need to estimate $\Nc_{2m, \, 2r}(T, f)$.  To estimate the  
        part   $ \| f_\e \|_{L^\infty((0,T),\Hc^{2m-1}_{2r})}$ of the quantity, we can proceed by standard energy estimates as above.  Then the main part of the work
         will be to control $ \| \rho_\e \|_{L^2((0,T),H^{2m})} $ uniformly in $\eps$ by using the Penrose stability condition. Note that we cannot use the estimate \eqref{stupide} to get a control
          that is independent of $\eps$ since estimating $\|f\|_{\Hc^{2m}_{2r}}$ in terms of $\Nc_{2m, \, 2r}(T, f)$ requires the use of the elliptic regularity provided by the Poisson equation, and thus costs negative powers  of $\eps$.

 \bigskip
 
   We end this section with the  $\Hc^{2m-1}_{2r}$ energy estimate without loss in $\eps$.
     
      \begin{lemma}
      \label{lemfacilesans}
  \label{energy}
 For $2m>2+{d }$ and $2r>d/2$,  we have for any solution
  $f$  to \eqref{VP}    the estimate
  \begin{align}
  \label{eq-energy}
  & \sup_{[0, T]}\| f \|_{\Hc^{2m-1}_{2r}}  \leq\| f^0 \|_{\Hc^{2m-1}_{2r}}+   T^{1\over 2} \Lambda (T, R),   
  \end{align}
  for every $T \in [0, T^\eps)$.
  \end{lemma}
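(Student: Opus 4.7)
The plan is to run exactly the same scheme as in the proof of Lemma~\ref{lemfacile} but at one order lower in derivatives. Working at order $2m-1$ costs one extra derivative on $E$, which is precisely what can be absorbed by the regularized Poisson equation with no loss in $\eps$: from $V_{\eps} - \eps^2 \Delta V_{\eps} = \rho - 1$ we have $\widehat{E}(k) = -\frac{ik}{1+\eps^2|k|^2}\widehat{\rho}(k)$, and since $\bigl|\tfrac{k}{1+\eps^2|k|^2}\bigr| \leq |k|$ uniformly in $\eps$, we obtain
\[
\|E\|_{H^{2m-1}} \lesssim \|\rho\|_{H^{2m}}.
\]
The right-hand side is precisely the quantity controlled in $L^2_t$ by $\Nc_{2m,2r}(T,f) \leq R$.

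Concretely, I apply $\pa_x^\alpha \pa_v^\beta$ with $|\alpha|+|\beta| \leq 2m-1$ to $\Tc f = 0$, use the commutator identity~\eqref{comform}, and take the weighted $L^2$ inner product with $(1+|v|^2)^{2r}\pa_x^\alpha \pa_v^\beta f$, summing over $\alpha,\beta$. Writing $\chi(v) = (1+|v|^2)^r$ and $g = \pa_x^\alpha \pa_v^\beta f$, integration by parts in $(x,v)$ using $\na_v \cdot E = 0$ gives
\[
\int \chi^2 g\, \Tc g\, dx\, dv = \tfrac{1}{2}\tfrac{d}{dt}\|\chi g\|_{L^2}^2 - \tfrac{1}{2}\int (\na_v \chi^2) \cdot E\, g^2\, dx\, dv,
\]
and the latter is bounded by $C\|E\|_{L^\infty}\|\chi g\|_{L^2}^2$ since $|v|(1+|v|^2)^{2r-1} \leq (1+|v|^2)^{2r}$. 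The subprincipal transport term $\pa_{x_i}\pa_x^\alpha \pa_v^{\bar\beta^i}f$ from~\eqref{comform} has total order $\leq 2m-1$, so Cauchy--Schwarz yields a contribution $\lesssim \|f\|_{\Hc^{2m-1}_{2r}}^2$. For the commutator $[\pa_x^\alpha \pa_v^\beta, E \cdot \na_v]f$, I invoke Lemma~\ref{lemprod}, estimate~\eqref{com2}, with $s = s_0 = 2m-1$; the requirement $s_0 > 1+d$ is precisely the hypothesis $2m > 2+d$. This produces a bound $\lesssim \|E\|_{H^{2m-1}}\|f\|_{\Hc^{2m-1}_{2r}}$, which once paired with $\chi g$ contributes $\lesssim \|E\|_{H^{2m-1}}\|f\|_{\Hc^{2m-1}_{2r}}^2$.

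Combining these estimates and absorbing $\|E\|_{L^\infty} \lesssim \|E\|_{H^{2m-1}}$ by Sobolev embedding (valid since $2m-1 > d/2$), I arrive at
\[
\tfrac{d}{dt}\|f(t)\|_{\Hc^{2m-1}_{2r}}^2 \lesssim \bigl(1+\|\rho(t)\|_{H^{2m}}\bigr)\|f(t)\|_{\Hc^{2m-1}_{2r}}^2.
\]
Integrating on $[0,t]$ with $t \leq T \leq T^\eps$, applying Gronwall together with Cauchy--Schwarz in time and the bootstrap bound $\Nc_{2m,2r}(T,f) \leq R$ yields
\[
\sup_{[0,T]}\|f(t)\|_{\Hc^{2m-1}_{2r}} \leq \|f^0\|_{\Hc^{2m-1}_{2r}} \exp\!\bigl[C(T + T^{1/2}R)/2\bigr].
\]
Using $e^x - 1 \leq xe^x$ and factoring $T + T^{1/2}R = T^{1/2}(T^{1/2}+R)$ puts this into the desired form $\|f^0\|_{\Hc^{2m-1}_{2r}} + T^{1/2}\Lambda(T,R)$.

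There is no real obstacle here; the argument is a simpler rerun of Lemma~\ref{lemfacile}. The whole point is the derivative accounting: dropping from $\Hc^{2m}_{2r}$ to $\Hc^{2m-1}_{2r}$ frees up one derivative of $\rho$ on the source side, which exactly matches the one-derivative gain provided by the regularized Poisson equation, so no negative power of $\eps$ appears.
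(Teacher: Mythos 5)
Your proof is correct and essentially the same as the paper's: same weighted energy estimate via the commutation formula~\eqref{comform}, same use of the commutator bound~\eqref{com2} with $s=s_0=2m-1$ (which is where $2m>2+d$ enters), and the same crucial $\eps$-uniform estimate $\|E\|_{H^{2m-1}}\lesssim\|\rho\|_{H^{2m}}$. The only cosmetic difference is in the concluding arithmetic: the paper avoids Gronwall by substituting the bootstrap bound $\sup_{[0,T]}\|f\|_{\Hc^{2m-1}_{2r}}\leq R$ directly into the integral inequality to get an additive bound at once, whereas your Gronwall-plus-$e^x-1\leq xe^x$ route also works provided you additionally note $\|f^0\|_{\Hc^{2m-1}_{2r}}\leq R$ (from the bootstrap at $t=0$) so that the factor $\|f^0\|$ in front of the exponential remainder can be absorbed into $\Lambda(T,R)$.
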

  
      \begin{proof}[Proof of Lemma~\ref{lemfacilesans}]
      Let $\a, \b \in \N^d$ with $|\alpha |+ |\beta | = 2m-1 $.  We can use again the commutation formula \eqref{comform}  take  the scalar product with $(1+ |v|^2)^{2n_0} \pa^\a_x \pa^\b_v f$,  sum for all $|\a| + |\b| \leq 2m-1$ and use  \eqref{com2} with  $s = 2 m-1$, $\chi (v)= (1+ |v|^2)^{r}$ and $s_{0}= 2 m-1$ (which is licit since $2m>2+{d}$).
      We obtain that 
      \begin{equation}
      \label{energieformel}
         \frac{d}{dt} \|f \|_{\Hc^{2m-1}_{2n_0}}^2 \lesssim \| f\|^2_{\Hc^{2m-1}_{2n_0}} + \| E\|_{H^{2m-1}} \| f\|_{\Hc^{2m-1}_{2n_0}}^2.
    \end{equation}
   Integrating in time we obtain that for every $T \in [0, T^\eps]$, for some $C>0$
    $$ \sup_{[0, T]} \| f \|_{\Hc^{2m-1}_{2r}} \leq \| f^0 \|_{\Hc^{2m-1}_{2r}} + C  \sup_{[0, T]} \| f \|_{\Hc^{2m-1}_{2r}} \left(T  +   \int_{0}^T \| E \|_{H^{2m-1}}\, dt\right).$$
    By using  Cauchy-Schwarz in time  and the crude estimate
    \begin{equation}
    \label{estE1}
    \| E\|_{H^{2m-1}}= \| \nabla_{x} V \|_{H^{2m-1}} \lesssim \| \rho \|_{H^{2m}}
    \end{equation}
    which is uniform in $\eps$ since it does not use any elliptic regularity,  we obtain
    that
  $$  \sup_{[0, T]} \| f \|_{\Hc^{2m-1}_{2r}} \leq \| f^0 \|_{\Hc^{2m-1}_{2r}} +  C R (T + T^{ 1 \over 2} R).$$ 
  This proves the estimate \eqref{eq-energy}. 
       \end{proof}

  \section{Proof of Theorem \ref{theomain}: preliminaries for the estimates on $\rho$}
    \label{secPre2}
 
       \subsection{Definition of appropriate second order differential operators}
   
In order to estimate the $H^{2m}$ norm of $\rho$, we need to introduce appropriate differential operators of order $2m$ which are well adapted to the Vlasov equation in the quasineutral scaling. The usual basic approach is to use the vector fields  $\pa_x$, 
 $\partial_{v}$ and thus to apply $\partial^\alpha $ with $|\a|\leq 2m$ to the Vlasov equation. 
  The hope is that up to harmless commutators, $\partial^\alpha f$ will evolve according to the linearized
   equation about $f$ and thus that we will just have to understand the dynamics of this linearized equation.
      Nevertheless, there are unbounded terms arising because of  commutators.  
  the main problem  is the subprincipal term $\partial E \cdot \nabla_v \partial^{2m-1} f$ that involves $2m$ derivatives
  of $f$ and thus cannot be controlled by $\Nc_{2m}(t,f)$ uniformly in $\eps$.
   As explained above, we could try to use more complicated variable coefficients vector fields designed  to kill this commutator term. 
    But since these vector fields have to depend on $x$ they would not commute any more with the free transport operator
     $v \cdot \nabla_{x}$ and thus we would recreate another bad subprincipal commutator.
      This heuristics 
    motivates the analysis of this section. It turns out that the following second order operators (and their composition) are relevant
     since 
    they have  good commutation properties with the transport operator $\mathcal{T}$ (which was defined in~\eqref{T}).
    
    \begin{lemma}
    \label{cons}
 Let $(\varphi_{k,l}^{i,j}$, $\psi_{k,l}^{i,j})_{i,j,k,l \in \{1,\cdots,d\}}$ be smooth solutions of the system:
      \begin{equation}
      \label{eq-constraint}
      \left\{
\begin{aligned}
&\Tc  \varphi_{k,l}^{i,j} = \psi_{k,l}^{i,j} + \psi_{l,k}^{i,j}  -\sum_{1\leq k',l' \leq d}  \varphi_{k',l'}^{i,j}   \varphi_{k,l}^{k',l'}  + \delta_{k,j} \pa_{x_i}E_l + \delta_{k,i} \pa_{x_j}E_l, \\
&\Tc \psi_{k,l}^{i,j} =-   \sum_{1\leq k',l' \leq d}  \varphi_{k',l'}^{i,j}  \psi_{k,l}^{k',l'} +  \varphi_{k,l}^{i,j} \pa_{x_k} E_k.
\end{aligned}
\right.
    \end{equation}
    We assume that for all $k,l$, the matrices $(\varphi_{k,l}^{i,j})_{1\leq i,j \leq d}$ and $(\psi_{k,l}^{i,j})_{1\leq i,j \leq d}$ are symmetric. Introduce the second order operators
    \begin{equation}
    \label{def-L}
    L_{i, j} := \pa_{x_i x_j} + \sum_{1\leq  k,l \leq d} \left( \varphi_{k,l}^{i,j}\pa_{x_k} \pa_{v_l} + \psi_{k,l}^{i,j} \pa_{v_k v_l} \right).
    \end{equation}
    Then for all smooth functions $f$, we have the formula
        \begin{equation}
        \label{LT}
L_{i,j} \Tc(f) = \Tc L_{i,j} (f) +  \pa_{x_i x_j} E \cdot \na_v f + \sum_{k,l} \varphi^{i,j}_{k,l} L_{k,l} f.
    \end{equation}
    
    \end{lemma}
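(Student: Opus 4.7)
The plan is a direct computation of $[L_{i,j}, \Tc]f = L_{i,j}\Tc(f) - \Tc L_{i,j}(f)$ followed by substitution using~\eqref{eq-constraint}. The starting point is the two elementary commutators $[\pa_{x_i}, \Tc] = \pa_{x_i} E\cdot\na_v$ (since $\pa_{x_i}$ commutes with $\pa_t + v\cdot\na_x$ and $E$ depends only on $(t,x)$) and $[\pa_{v_l}, \Tc] = \pa_{x_l}$ (as $\pa_{v_l}$ kills $\pa_t$ and $E\cdot\na_v$ but extracts $\pa_{x_l}$ from $v\cdot\na_x$). Iterating these two relations, I would record
\begin{align*}
[\pa_{x_i x_j},\Tc]f &= \pa_{x_i x_j}E\cdot\na_v f + \pa_{x_i}E\cdot\na_v\pa_{x_j}f + \pa_{x_j}E\cdot\na_v\pa_{x_i}f,\\
[\pa_{x_k}\pa_{v_l},\Tc]f &= \pa_{x_k x_l}f + \pa_{x_k}E\cdot\na_v\pa_{v_l}f,\\
[\pa_{v_k}\pa_{v_l},\Tc]f &= \pa_{x_k}\pa_{v_l}f + \pa_{x_l}\pa_{v_k}f.
\end{align*}

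Next, for any smooth coefficient $\phi(t,x,v)$ and any second-order operator $P$, the fact that $\Tc$ is a first-order derivation yields $[\phi P,\Tc]f = \phi[P,\Tc]f - \Tc(\phi)\,Pf$. Applying this to each summand of $L_{i,j}$, with $P=\pa_{x_k}\pa_{v_l}$ or $P=\pa_{v_k}\pa_{v_l}$, and combining with the three commutators above, I would group $[L_{i,j},\Tc]f$ by the type of second-order derivative of $f$ appearing: the pure $\pa_{x_k x_l}f$ terms (with coefficient $\varphi^{i,j}_{k,l}$, which will later match the $\pa_{x_k x_l}$ part of $\sum_{k,l}\varphi^{i,j}_{k,l}L_{k,l}f$), mixed $\pa_{x_k}\pa_{v_l}f$ terms (with coefficient $-\Tc(\varphi^{i,j}_{k,l})$, plus contributions $\psi^{i,j}_{k,l}+\psi^{i,j}_{l,k}$ coming from the $\psi$-part and $\d_{k,j}\pa_{x_i}E_l + \d_{k,i}\pa_{x_j}E_l$ from $[\pa_{x_i x_j},\Tc]$), pure $\pa_{v_k}\pa_{v_l}f$ terms (with coefficient $-\Tc(\psi^{i,j}_{k,l})$ plus contributions from $\varphi^{i,j}_{k,l}\pa_{x_k}E\cdot\na_v\pa_{v_l}$), and the ``free'' $\pa_{x_i x_j}E\cdot\na_v f$ contribution.

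Finally, I would expand $\sum_{k,l}\varphi^{i,j}_{k,l}L_{k,l}f$ in the same basis and compare coefficients termwise. The $\pa_{x_k x_l}f$ and $\pa_{x_i x_j}E\cdot\na_v f$ contributions match trivially; matching the $\pa_x\pa_v$ and $\pa_v\pa_v$ coefficients yields precisely the two equations in~\eqref{eq-constraint}, after using the symmetry of $(\psi^{i,j}_{k,l})$ and $(\varphi^{i,j}_{k,l})$ in $(i,j)$ to recognize $\pa_{x_i}E\cdot\na_v\pa_{x_j}f + \pa_{x_j}E\cdot\na_v\pa_{x_i}f$ as the source of the $\d_{k,j}\pa_{x_i}E_l + \d_{k,i}\pa_{x_j}E_l$ inhomogeneities. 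The main obstacle is not analytical but combinatorial: the bookkeeping of indices through the three commutators and the careful pairing of the $\psi^{i,j}_{k,l}+\psi^{i,j}_{l,k}$ contributions with the quadratic terms $\sum_{k',l'}\varphi^{i,j}_{k',l'}\varphi^{k',l'}_{k,l}$, which is exactly what the constraint system~\eqref{eq-constraint} has been designed to absorb. Once these two identities are invoked, the formula~\eqref{LT} follows.
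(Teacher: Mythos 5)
Your proposal is correct and follows essentially the same route as the paper: compute the three elementary commutators $[\pa_{x_ix_j},\Tc]$, $[\pa_{x_k}\pa_{v_l},\Tc]$, $[\pa_{v_kv_l},\Tc]$, use the Leibniz-type identity $[\phi P,\Tc]f=\phi[P,\Tc]f-\Tc(\phi)Pf$, and match coefficients of each type of second-order derivative of $f$ against \eqref{eq-constraint}. The only cosmetic difference is that the paper substitutes $\varphi^{i,j}_{k,l}\pa_{x_kx_l}f$ by $\varphi^{i,j}_{k,l}L_{k,l}f$ minus its lower-order parts, whereas you expand $\sum_{k,l}\varphi^{i,j}_{k,l}L_{k,l}f$ and compare; the algebra is identical.
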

    
    \begin{proof}[Proof of Lemma~\ref{cons}]
    We have by direct computations
    \begin{align*}
    \pa_{x_i x_j} (\Tc f) &= \Tc (  \pa_{x_i x_j} f) + \pa_{x_i x_j} E \cdot \na_v f 
    + \pa_{x_i} E \cdot \na_v \pa_{x_j} f 
    + \pa_{x_j} E \cdot \na_v \pa_{x_i} f, \\
    \varphi_{k,l}^{i,j}\pa_{x_k} \pa_{v_l} (\Tc f) &= \Tc ( \varphi_{k,l}^{i,j}\pa_{x_k} \pa_{v_l} f)
     + \varphi_{k,l}^{i,j} \left(\pa_{x_k x_l}f 
     + \pa_{x_k} E  \cdot \na_v \pa_{v_l} f\right)
     -\Tc  (\varphi_{k,l}^{i,j}) \pa_{x_k} \pa_{v_l} f, \\
      \psi_{k,l}^{i,j} \pa_{v_k v_l} (\Tc f) &= \Tc ( \psi_{k,l}^{i,j} \pa_{v_k v_l} f) 
      +  \psi_{k,l}^{i,j}( \pa_{v_k} \pa_{x_l} f + \pa_{v_l} \pa_{x_k} f) 
      -  \Tc (\psi_{k,l}^{i,j}) \pa_{v_k v_l} f.
    \end{align*}
    We can rewrite
    $$
     \varphi_{k,l}^{i,j} \pa_{x_k x_l}f  =    \varphi_{k,l}^{i,j}  \left( L_{k,l} f - \sum_{k',l'} \left( \varphi_{k',l'}^{k,l}\pa_{x_{k'}} \pa_{v_{l'}} + \psi_{k',l'}^{k,l} \pa_{v_{k'} v_{l'}} \right)f \right),
    $$
    which entails that 
    \begin{align*}
   L_{i,j} \Tc (f) &= \Tc  L_{i,j} (f) +  \pa_{x_i x_j} E \cdot \na_v f + \sum_{k,l} \varphi^{i,j}_{k,l} L_{k,l} f \\
   &+ \sum_{k,l} \pa_{x_k} \pa_{v_l} f \left[-\Tc  \varphi_{k,l}^{i,j} + \psi_{k,l}^{i,j} + \psi_{l,k}^{i,j}  -\sum_{k',l'}  \varphi_{k',l'}^{i,j}   \varphi_{k,l}^{k',l'}  + \delta_{k,j} \pa_{x_i}E_l + \delta_{k,i} \pa_{x_j}E_l    \right] \\
   &+ \sum_{k,l} \pa_{v_k v_l} f \left[ -\Tc  \psi_{k,l}^{i,j} -   \sum_{k',l'}  \varphi_{k',l'}^{i,j}  \psi_{k,l}^{k',l'}  +  \varphi_{k,l}^{i,j} \pa_{x_k} E_k  \right].
   \end{align*} 
   We therefore deduce \eqref{LT}, because of \eqref{eq-constraint}.
    \end{proof}
 
  We shall now study the Sobolev regularity of the solution of the constraint equations \eqref{eq-constraint}.
        \begin{lemma}
        \label{lem-constraint} Assume $2m>2+d$ and $2r >d$. There exists $T_{0}= T_{0}(R)>0$ independent of $\eps$ such that  for  every    $ T <  \min (T_0, T^\eps)$,  there exists a unique  solution $(\varphi_{k,l}^{i,j}, \psi_{k,l}^{i,j})_{i,j,k,l}$ on $[0,T]$ of \eqref{eq-constraint}
        satisfying   
        $$
        \varphi_{k,l}^{i,j}\vert_{t=0} =  \psi_{k,l}^{i,j}\vert_{t=0} =0.
        $$
       Moreover, we have  the estimates
              \begin{equation}
    \label{energy-constraintbis}
   \sup_{[0, T]} \sup_{i,j,k,l}  \|(\varphi_{k,l}^{i,j}, \psi_{k,l}^{i,j}) \|_{W^{p, \infty}_{x,v}} \leq  T^{1 \over 2} \Lambda(T,R),   \quad p < 2m - d/2 - 2,
    \end{equation}
    \begin{equation}
      \label{energy-constraint}
   \sup_{[0, T]} \sup_{i,j,k,l} \|(\varphi_{k,l}^{i,j}, \psi_{k,l}^{i,j}) \|_{\Hc_{-r}^{2m-2}} \leq T^{1 \over 2} \Lambda(T,R).
\end{equation}
       Finally, for all $k,l$, the matrices $(\varphi_{k,l}^{i,j})_{1\leq i,j \leq d}$ and $(\psi_{k,l}^{i,j})_{1\leq i,j \leq d}$ are symmetric.

%
%
%
        \end{lemma}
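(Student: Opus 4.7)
The plan is a Picard iteration for the coupled linear transport system obtained from \eqref{eq-constraint} by freezing $(\varphi,\psi)$ on the right-hand side. Starting from $(\varphi^{(0)},\psi^{(0)})\equiv 0$, I define $(\varphi^{(n+1)},\psi^{(n+1)})$ by solving, for each quadruple $(i,j,k,l)$,
\begin{equation*}
\mathcal{T}\,\varphi^{(n+1),i,j}_{k,l} = S^\varphi_{k,l,i,j}\!\left(\varphi^{(n)},\psi^{(n)},E\right),\qquad \mathcal{T}\,\psi^{(n+1),i,j}_{k,l}=S^\psi_{k,l,i,j}\!\left(\varphi^{(n)},\psi^{(n)},E\right),
\end{equation*}
with zero initial data, where $E$ is the electric field of the bootstrapped solution $f$ and $(S^\varphi,S^\psi)$ are the right-hand sides of \eqref{eq-constraint}. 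Since $\mathcal{T}=\partial_t+v\cdot\nabla_x+E\cdot\nabla_v$ has smooth coefficients (with $E$ uniformly controlled via \eqref{estE1}), each step reduces to scalar transport equations solvable by characteristics.

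The core is a bootstrap in two norms, each of size $T^{1/2}\Lambda(T,R)$: weighted Sobolev $\mathcal{H}^{2m-2}_{-r}$ and pointwise $W^{p,\infty}_{x,v}$. For the former, I apply $\partial_x^\alpha\partial_v^\beta$ with $|\alpha|+|\beta|\leq 2m-2$ to the equation, use the commutator identity \eqref{comform}, and pair with $(1+|v|^2)^{-2r}\partial_x^\alpha\partial_v^\beta\varphi^{(n+1)}_{k,l,i,j}$. The weight $\chi(v)=(1+|v|^2)^{-r/2}$ satisfies $|\partial^\gamma\chi|\lesssim\chi$, so Lemma~\ref{lemprod} applies: the commutator $[\partial_x^\alpha\partial_v^\beta,E\cdot\nabla_v]\varphi^{(n+1)}$ is bounded in the weighted $L^2$ norm by \eqref{com2} in terms of $\|E\|_{H^{2m-2}}\|\varphi^{(n+1)}\|_{\mathcal{H}^{2m-2}_{-r}}$, which is uniform in $\eps$ via \eqref{estE1}. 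The linear source term $\partial_x E$ contributes $\|\rho\|_{L^2_tH^{2m}}\lesssim R$ after Cauchy--Schwarz in time; the quadratic sources in $(\varphi^{(n)},\psi^{(n)})$ are handled by the tame product estimate \eqref{com1} coupled to the bootstrap assumption $\|(\varphi^{(n)},\psi^{(n)})\|_{W^{p,\infty}_{x,v}}\leq T^{1/2}\Lambda(T,R)$; and the subprincipal transport-commutator terms $\mathds{1}_{\beta_i\neq 0}\partial_{x_i}\partial_x^\alpha\partial_v^{\bar\beta^i}\varphi^{(n+1)}$ are absorbed by the energy in the usual Vlasov manner. A Gronwall argument then yields the claimed $\mathcal{H}^{2m-2}_{-r}$ bound on $[0,T_0]$ for $T_0=T_0(R)$ small enough.

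For \eqref{energy-constraintbis} I argue via characteristics: writing $\varphi^{(n+1)}(t,x,v)=\int_0^t S^\varphi(s,X(s;t,x,v),V(s;t,x,v))\,ds$ along the flow of $\mathcal{T}$, every derivative falls either on the source—which is $W^{p,\infty}_{x,v}$-bounded because $\partial_x E\in W^{p,\infty}_x$ whenever $p<2m-d/2-2$ (Sobolev embedding applied to the uniform bound $\|E\|_{H^{2m-1}}\lesssim\|\rho\|_{H^{2m}}$), plus lower-order iterate-dependent terms—or on the characteristics, whose $W^{p,\infty}$-deviation from the free flow is $O(T)$ for small $T$. The nonlinear feedback closes the iteration in the same ball. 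Contraction in the weaker norm $\mathcal{H}^{0}_{-r}\cap L^\infty_{x,v}$ is obtained by the same scheme applied to the difference of two consecutive iterates, giving a unique fixed point; uniqueness in the class of the lemma follows identically. Symmetry in $(i,j)$ is preserved at every step because both $S^\varphi_{k,l,i,j}$ and $S^\psi_{k,l,i,j}$ are manifestly symmetric in $(i,j)$, and the initial data are zero.

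The main obstacle is maintaining uniformity in $\eps$ despite the fact that $\mathcal{T}$ itself depends on $E_\eps$: this is resolved by systematically using the elliptic-gain-free bound $\|E_\eps\|_{H^s}\lesssim\|\rho_\eps\|_{H^{s+1}}$ together with the weighted commutator estimates of Lemma~\ref{lemprod}, which tolerate the negative-power weight $(1+|v|^2)^{-r/2}$ thanks to the stability property $|\partial^\gamma\chi|\lesssim\chi$. A secondary subtlety is matching exactly the threshold $p<2m-d/2-2$ in \eqref{energy-constraintbis}, which tracks precisely the Sobolev embedding $H^{2m-1}_x\hookrightarrow W^{p+1,\infty}_x$ used to put $\partial_x E$ in $W^{p,\infty}_x$.
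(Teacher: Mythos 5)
Your proposal follows essentially the same route as the paper: a priori estimates in the two norms $W^{p,\infty}_{x,v}$ and $\Hc^{2m-2}_{-r}$ via the commutation formula \eqref{comform}, the tame estimates of Lemma~\ref{lemprod}, and the elliptic-gain-free bound \eqref{estE1}, closed by Gronwall over a time $T_0(R)$; the paper merely uses the maximum principle for $\mathcal T$ rather than the explicit characteristic representation for the pointwise bound, leaves the fixed-point iteration implicit, and deduces symmetry from uniqueness rather than from propagation along the Picard scheme (all equivalent). One small slip: in applying \eqref{com2} to the commutator with $E\cdot\nabla_v$ you need $s_0>1+d$, so under the hypothesis $2m>2+d$ you should take $s_0=2m-1$ (giving $\|E\|_{H^{2m-1}}$, controlled via \eqref{estE1}), not $s_0=2m-2$ as written.
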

    
        \begin{proof}[Proof of Lemma~\ref{lem-constraint}]
       System \eqref{eq-constraint} is a system of  semi-linear  transport equations that is coupled only via a zero order term.   
        The existence and uniqueness  of smooth solution can be obtained by a  standard  fixed point argument  and thus
         we shall  only focus on a priori estimates.
            Note that 
       the symmetry of the matrices $(\varphi_{k,l}^{i,j})_{1\leq i,j \leq d}$ and $(\psi_{k,l}^{i,j})_{1\leq i,j \leq d}$  for all $k,l$  is a consequence of the  uniqueness of the solution.
    
     Let us set 
     $$M_{p}(T)=  \sup_{[0, T]}\sup_{i,j,k,l} \|(\varphi_{k,l}^{i,j}(t), \psi_{k,l}^{i,j}(t)) \|_{ W^{p, \infty}_{x,v} }.$$
      We first apply $ \partial_{x}^\alpha \partial_{v}^\beta$ for $|\alpha|+ |\beta| \leq p$  to \eqref{eq-constraint} and use again the commutation formula
       \eqref{comform}. By using the maximum principle for the transport operator $\mathcal{T}$, we get that
       $$ M_{p}(T) \lesssim T \left(1 + M_{p}(T)\right) M_{p}(T) + (1 + M_{p}(T)) \int_{0}^T \| \nabla E\|_{W^{p, \infty}} \, dt .$$
        By Sobolev embedding and  \eqref{estE1}, we obtain
        $$ \int_{0}^T \| \nabla E\|_{W^{p, \infty}} \, dt \lesssim  \int_{0}^T \| E\|_{H^{2 m- 1}} \, dt \lesssim T^{1 \over 2} R$$
         since $ 1 +  p+ {d \over 2} < 2m-1$
         and hence we find
       $$ M_{p}(T) \lesssim T^{1 \over 2} R + ( T + T^{1 \over 2} R)  M_{p}(T) +  M_{p}(T)^2.$$
       Consequently, there exists $\gamma_{0}>0$ independent of $\eps$ but sufficiently small   such that for every $T \leq T_{0}$
        satisfying $ ( T_{0}+ T_{0}^{1 \over 2} R) \leq  \gamma_{0}$, we get the estimate
        $$ M_{p}(T) \leq  T^{1 \over 2} R.$$

\bigskip

  For the estimate  \eqref{energy-constraint}, we apply $\partial_{x}^\alpha \partial_{v}^\beta$ with $|\alpha | + | \beta | \leq 2m-2$  to the system \eqref{eq-constraint}, 
   we use the commutation formula \eqref{comform} with the transport operator, multiply by the weight
    $( 1 + |v|^2 )^{ - {r}}$ and take the scalar product by  $\partial_{x}^\alpha \partial_{v}^\beta (\varphi_{k,l}^{i,j}, \psi_{k,l}^{i,j})$  as usual.
      Let us set
   $$ Q_{2m-2} (t)= \left( \sum _{i,j,k,l} \|(\varphi_{k,l}^{i,j}, \psi_{k,l}^{i,j}) (t) \|_{\Hc_{-r}^{2m-2}}^2\right)^{1 \over 2}.$$
    By using 
    \begin{itemize}
    
    \item the product estimate \eqref{com1} with $s= 2 m-2$,  $k=s/2= m-1$, to handle the quadratic terms in the right hand side,
    
    \item the commutator estimate  \eqref{com2}  with $s= 2 m-2$,  $s_{0}= 2m-1 \, (> 1+ d) $,  to handle the commutators with $E$,
    
    \end{itemize}
    
     we obtain for $t \in [0, T]$, $T < \min (T_{0}, T^\eps)$ that 
    \begin{multline*}
      \frac{d}{dt} \| (\varphi_{k,l}^{i,j}, \psi_{k,l}^{i,j})\|_{\Hc_{-r}^{2m-2}}^2 \lesssim (1+  M_{m-1}(T)+   \sup_{[0, T]} \| E\|_{H^{2m-1}} ) Q_{2m-2}(t)^2
       \\ + \left \| { 1 \over ( 1 + |v|^2)^{r \over 2}   } E \right\|_{H^{2m-1}_{x,v}}  \| (\varphi_{k,l}^{i,j}, \psi_{k,l}^{i,j})\|_{\Hc_{-r}^{2m-2}}.
   \end{multline*}
    Note that since  $2r>d$, we have that
  $$ \left \| { 1 \over ( 1 + |v|^2)^{s \over 2}   } E \right\|_{H^{2m-1}_{x,v}}  \lesssim \left\| E \right\|_{H^{2m-1}_{x}}.$$
  Consequently,   we can   sum  over  $i,j,k,l$,  and  use  \eqref{estE1}  and the estimate \eqref{energy-constraintbis}
  (since $m-1< 2m- { d \over 2} -2$ is equivalent to $2m>2+d$), to obtain that 
 $$ {d \over dt} Q_{2m-2}(t) \lesssim  ( 1 + T^{1 \over 2 } \Lambda(T, R)) Q_{2m-2}(t) +   \| \rho \|_{H^{2m}}.$$
   The estimate  \eqref{energy-constraint} thus  follows from the Gronwall inequality and Cauchy-Schwarz.

%
%

    \end{proof}
    
    We now study the effect of composing $L_{i,j}$ operators.
    
        \begin{lemma}
        \label{lemrestes}
Assume $2m>3+d$ and $2r>d$. For $i, j \in \{1,\cdots, d\}^m$, define
        \begin{equation}
 f_{i, j} = L^{(i,j)}f:= L_{i_1, j_1} \cdots L_{i_m, j_m} f
    \end{equation}
 For every $T < \min (T_{0}, T^\eps)$,  we first have that for $i,j \in \{1,\cdots, d\}^m$  
    \begin{equation}
    \label{reduit}
     \int_{\R^d} f_{i,j} \, dv = \int_{\R^d} \pa_x^{\a(i,j)} f \, dv + \mathcal{R} = \pa_x^{\a(i,j)} \rho +\mathcal{ R},
         \end{equation}
 with $\alpha(i,j)=(\alpha_k(i,j))_{1\leq k \leq d}$,  $\alpha_k(i,j)$ being equal to the number of occurrences of $k$ in the set $\{i_1,\cdots, i_m, j_1, \cdots, j_m\}$ and $\mathcal{R}$ is a remainder satisfying
  for $2m>  3 + d$ 
    \begin{equation}
    \label{rest1}
 \| \mathcal{R} \|_{L^\infty((0,T),L^2_{x})} \lesssim  \Lambda(T, R).
       \end{equation}
 Moreover, for $f$ satisfying \eqref{VP}, we have that $f_{ij}$ solves 
    \begin{equation}
    \label{eq-fij}
\Tc (f_{i,j}) +  \pa_x^{\alpha(i,j)}  E \cdot \na_v f + \mathcal{M}_{i,j} \mathcal{F} = F_{i,j}, 
       \end{equation}
 where 
  \begin{equation}
  \label{eq-Mij}
 \mathcal{F} := (f_{i,j})_{i, j \in \{1,\cdots, d\}^m}, \quad \mathcal{M}_{i,j} \mathcal{F} := \sum_{k=1}^{m} \sum_{k',l'} \varphi^{i_{k}, j_{k} }_{k',l'}   f_{i_{k \to k'}, j_{k \to l'}},
  \end{equation}
     where $i_{ s \to t}$ denotes the element of $\{1,\cdots,d\}^m$ which is equal to $i$ except for its $s$-th element which is equal to $t$ and $F= (F_{i,j})$ is a remainder satisfying
    \begin{equation}
    \label{restF}
\left\|  F  \right\|_{L^2 ((0,T), \Hc^{0}_{n_{0}})} \leq \Lambda(T,R), \quad  \forall T < \min (T_{0}, T^\eps).
       \end{equation}

    \end{lemma}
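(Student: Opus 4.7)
My plan is to treat the two assertions separately: the identity \eqref{reduit} follows from a direct expansion of $L^{(i,j)}$ together with integrations by parts in $v$, while the evolution equation \eqref{eq-fij} is proved by induction on $m$ using the commutation formula \eqref{LT} of Lemma~\ref{cons}.

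For \eqref{reduit}, I would decompose $L_{i,j} = \partial_{x_i x_j} + A_{i,j}$, where $A_{i,j}$ collects the terms carrying the coefficients $\varphi^{i,j}_{k,l}, \psi^{i,j}_{k,l}$, each of which involves at least one $\partial_{v_l}$ derivative. Expanding the $m$-fold composition yields the principal contribution $\partial_x^{\alpha(i,j)} f$, whose $v$-integral is exactly $\partial_x^{\alpha(i,j)} \rho$, plus a sum of terms in which at least one factor $A_{i_k,j_k}$ is present. After fully distributing derivatives by Leibniz and integrating by parts in $v$ to move all $\partial_v$'s off $f$, each of these leftover terms becomes an integral of a product of derivatives of $\varphi, \psi$ against a derivative of $f$ of order at most $2m-1$; the loss of one order arises because every non-principal term carries at least one $\partial_v$ which is absorbed by the coefficients during integration by parts. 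The bound $\|\mathcal{R}\|_{L^\infty_t L^2_x} \lesssim \Lambda(T,R)$ then follows from Cauchy--Schwarz in $v$ (using that $(1+|v|^2)^{-r} \in L^2_v$ since $2r > d$), the weighted product estimate \eqref{com1}, the Sobolev bounds \eqref{energy-constraintbis}--\eqref{energy-constraint} on $\varphi, \psi$, and the bootstrap estimate $\|f\|_{L^\infty_t \mathcal{H}^{2m-1}_{2r}} \leq R$.

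For \eqref{eq-fij}, I would proceed by induction on $m$. The base case $m=1$ is exactly Lemma~\ref{cons} applied to $\mathcal{T}f = 0$: the identity \eqref{LT} yields \eqref{eq-fij} with $F_{i,j} \equiv 0$. For the inductive step, setting $i'=(i_2,\ldots,i_m)$, $j'=(j_2,\ldots,j_m)$, and $f' := L_{i_2,j_2}\cdots L_{i_m,j_m} f$, I apply $L_{i_1,j_1}$ to the induction hypothesis for $f'$ and use \eqref{LT} again to commute $L_{i_1,j_1}$ past $\mathcal{T}$. The leading part of $-L_{i_1,j_1}(\partial_x^{\alpha(i',j')} E \cdot \nabla_v f)$ reconstructs $-\partial_x^{\alpha(i,j)} E \cdot \nabla_v f$; the $k=1$ slice of $\mathcal{M}_{i,j}\mathcal{F}$ is produced by the $\sum_{k,l} \varphi^{i_1,j_1}_{k,l} L_{k,l} f'$ term from \eqref{LT}; and the $k \geq 2$ slices arise from $L_{i_1,j_1}(\mathcal{M}_{i',j'}\mathcal{F}')$ up to commutators $[L_{i_1,j_1}, \varphi^{i_k,j_k}_{k',l'}]$. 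All remaining pieces---the Leibniz subprincipal terms in $L_{i_1,j_1}(\partial_x^{\alpha(i',j')} E \cdot \nabla_v f)$, the term $-\partial_{x_{i_1}x_{j_1}} E \cdot \nabla_v f'$, the commutators above, and $L_{i_1,j_1} F_{i',j'}$ from the induction---are regrouped as $F_{i,j}$. Each contribution is a sum of products of derivatives of $E$ of order at most $2m-1$ (controlled uniformly in $\eps$ via \eqref{estE1} by $\|\rho\|_{H^{2m}}$), derivatives of $f$ of total order at most $2m-1$ (controlled by the bootstrap), and coefficients $\varphi, \psi$ of size $T^{1/2} \Lambda(T,R)$ by Lemma~\ref{lem-constraint}. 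Summing via the product estimates of Lemma~\ref{lemprod} with weight $\chi(v) = (1+|v|^2)^{r/2}$ and integrating in time against $\|\rho\|_{L^2_t H^{2m}} \leq R$ yields \eqref{restF}.

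The main obstacle is not any individual step but rather the combinatorial bookkeeping in the inductive stage of \eqref{eq-fij}: one must verify that at every step of the induction the full order $2m$ of derivatives is carried only by the two terms $\partial_x^{\alpha(i,j)} E \cdot \nabla_v f$ and $\mathcal{M}_{i,j} \mathcal{F}$ that are explicitly split off on the left-hand side, and that every commutator or Leibniz remainder distributes its derivatives either so that at most $2m-1$ of them fall on $f$, or else transfers one onto a coefficient $\varphi, \psi$, thereby picking up a small factor $T^{1/2}$ from Lemma~\ref{lem-constraint}. This is precisely where the subtle choice of the differential operators $L_{i,j}$ in \eqref{def-L}, dictated by the constraint equations \eqref{eq-constraint}, plays its key role, since it is their very structure that prevents any rogue subprincipal term from surviving at the top order $2m$.
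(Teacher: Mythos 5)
Your overall strategy is the right one and follows the paper's own line: expand the composition $L^{(i,j)}$ to split off $\partial_x^{\alpha(i,j)} f$, estimate the leftover terms to get \eqref{reduit}, and for \eqref{eq-fij} iterate the commutation formula \eqref{LT} (your induction is equivalent to the paper's explicit formula \eqref{Fdef}--\eqref{F4def} for $F_{i,j}$ as a sum of four blocks). However, there are two places where the sketch glosses over the actual difficulty and, as written, would not go through.

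First, for \eqref{reduit} your plan to ``integrate by parts in $v$ to move \emph{all} $\partial_v$'s off $f$'' overshoots. A generic non-principal term has $2m-s$ derivatives landing on $f$ (with at least one in $v$) and $s$ derivatives distributed on the coefficients $U=(\varphi,\psi)$; if you strip \emph{all} $\partial_v$'s from $f$, you can push, in the worst case (e.g.\ all $L$'s contributing a $\psi\,\partial_v^2$ term), up to $2m$ derivatives onto $U$ --- but Lemma~\ref{lem-constraint} controls $U$ only in $\Hc^{2m-2}_{-r}$ and only in $W^{p,\infty}$ for $p<2m-d/2-2$. The paper integrates by parts \emph{once}, and only in the single dangerous case $s=0$ where $2m$ derivatives hit $f$, to drop to $2m-1$; for $s\ge 1$ the count on $f$ is already $\le 2m-1$ and one keeps the derivatives where they are.

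Second, and more importantly, you do not address how to estimate terms where a high-order derivative lands on $\varphi,\psi$ (order $\ge 2m-d/2-2$, where $L^\infty$ bounds fail). This is the heart of the argument: one must split the analysis according to whether the top derivative order on $U$ can be absorbed in $L^\infty$, or must instead be placed in weighted $L^2$ against $f$ (or $E$) in weighted $L^\infty$ via the dual pairing Lemma~\ref{lemcomdual} (not the product estimate \eqref{com1} you cite, which has the wrong structure for this). The paper packages this case analysis in Lemma~\ref{JseK} and reuses it repeatedly in estimating $F_1,\dots,F_4$, and the index bookkeeping there (comparing $s$, $l$ and $s/2$ against $2m-d/2-2$) is precisely what your ``combinatorial bookkeeping'' paragraph identifies as the obstacle but does not carry out. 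Your claim that the coefficients are ``of size $T^{1/2}\Lambda(T,R)$'' is only true in low-order $W^{p,\infty}$ norms; at high order they are merely bounded in $\Hc^{2m-2}_{-r}$, and the weight-duality argument is unavoidable. Without this, both \eqref{rest1} and \eqref{restF} are unproved.
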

   
       \begin{proof}[Proof of Lemma~\ref{lemrestes}]
       At first, we can expand $f_{i, j}=  L_{i_1, j_1} \cdots L_{i_m, j_m} f$ in a more tractable form.
        Let us  set  $U= ( \varphi^{i_{\alpha}, j_{\beta}}_{k,l}, \psi^{i_{\alpha},j_{\beta}}_{k,l})_{1\leq k,l\leq d, \, 1 \leq \alpha, \beta \leq  m}$.
     Then, we can write
     \begin{align} f_{i,j}
     \label{Lmexp}
     & = \partial_{x}^{\alpha (i,j)} f + \sum_{s=0}^{2m-2} \sum_{e, \, \alpha, \, k_{0}  \cdots, k_{s}}  P_{s, e,\alpha}^{k_{0}}(U) P_{s,e, \alpha}^{k_{1}}(\partial U) \cdots P_{s,e,\alpha}^{k_{s}}(\partial^{s} U) \partial_{v}^e \partial^{\alpha} f 
     \\ \nonumber &  =:  \partial_{x}^{\alpha (i,j)} f  + 
       \sum_{s=0}^{2m-2} \sum_{e, \, \alpha, \, k_{0}\cdots k_{s}} \mathcal{R}_{s,e, \alpha}^{k_{0},\cdots k_{s}},
       \end{align}
     where  the sum is  taken on indices such that
      \begin{equation}
      \label{indices}
       |e|= 1, \, | \alpha |= 2m - 1-s, \, k_{0} + k_{1}+ \cdots k_{s} \leq m , \, k_{0} \geq 1, \, k_{1} + 2 k_{2}+ \cdots s k_{s}= s. 
      \end{equation}
       and   $ (P_{s,e, \alpha}^{k_{i}}(X))_{0 \leq i \leq s}$ are  polynomials of degree smaller than   $k_{i}$ (we denote by $\partial^{k} U$ the vector made of all the partial derivatives of length $k$ of all components of $U$). The existence of an  expansion under this form  can be  easily proven by induction.
         We can set
         $$ \mathcal{R}=\int_{\mathbb{R}^d}  \sum_{s=0}^{2m-2} \sum_{e, \, \alpha, \, k_{0},\cdots, k_{s}} \mathcal{R}_{s,e, \alpha}^{k_{0},\cdots, k_{s}}\, dv$$
       so that we  have to estimate $\int_{\mathbb{R}^d}  \mathcal{R}_{s,e, \alpha}^{ k_{0},\cdots, k_{s}} \, dv$.
        All the following estimates are uniform in time  for $t \in [0,T]$ with $T \leq \min(T_{0}, T^\eps)$ but we do not mention
         the time parameter for notational convenience.
        
        Let us start with the case where 
          $\mathcal{R}_{e,s, \alpha}^{ k_{0},\cdots k_{s}}$ contains the maximal number of derivatives applied to $f$
           that is to say  when   $|\alpha |= 2m-1$ so that $2m$ derivatives of $f$ are involved.
            In this case, we have $s=0$ and  hence 
            $$\int_{\mathbb{R}^d}  \mathcal{R}_{e,0, \alpha}^{ k_{0}}\, dv =  \int_{\mathbb{R}^d}   P_{s, e,\alpha}^{k_{0}}(U) \partial_{v} \partial^\alpha f \, dv$$
            where $k_{0}$ is of degree less than $m$. We can
             thus  integrate by parts in $v$  to obtain that
             $$ \left\| \int_{\mathbb{R}^d}  \mathcal{R}_{e,0, \alpha}^{k_{0}} \, dv\right\|_{L^2_{x}}
              \lesssim \Lambda (\| U\|_{W^{1, \infty}_{x,v}}) \left \| \int_{\mathbb{R}^d}  |\partial^\alpha f | \, dv \right\|_{L^2_{x}} \lesssim  \Lambda (\| U\|_{W^{1, \infty}_{x,v}} ) \|f \|_{\Hc^{2m-1}_{r}}.$$
               Consequently, by using \eqref{energy-constraintbis} (since by assumption on $m$, we have  $1< 2 m -  2  - {d \over 2}$) and \eqref{eq-energy}
               (since $2r>d$), we obtain
          that  on $[0,T]$,
              $$\left\| \int_{\mathbb{R}^d}  \mathcal{R}_{e,0, \alpha}^{k_{0}} \, dv\right\|_{L^2_{x}} \leq  \Lambda (T, R).$$
            
            It remains to estimate the  terms for which  $s \geq 1$. Note that for all these  terms the total number of derivatives applied to $f$ is at most  $2m-1$.

\medskip
            
\noindent $\bullet$  When $ s< 2m-{ d \over 2} -2$, we can use \eqref{energy-constraintbis} to obtain that
             $$ \|P_{s, e,\alpha}^{k_{0}}(U) P_{s,e, \alpha}^{k_{1}}(\partial U) \cdots P_{s,e,\alpha}^{k_{s}}(\partial^{s} U)\|_{L^\infty_{x,v}} \leq \Lambda (T,R)$$
              and hence we obtain as above that
              $$  \left\| \int  \mathcal{R}_{e,s, \alpha}^{k_{0},  \cdots , k_{s}} \, dv\right\|_{L^2_{x}} \leq  \Lambda (T, R) \|f\|_{\Hc^{2m-1}_{r}} \leq  \Lambda (T, R).$$
              
     \medskip
            
\noindent $\bullet$  Let us now consider  $s \geq 2m- 2 - {d \over 2}.$
             Let us start with  the case  where in the sequence $(k_{1}, \cdots, k_{s})$, the  bigger index $l$ such that $k_{l} \neq 0$ and $k_{p}$ = 0 for every $p >l$
              is such that $l >s/2$. In this case, since $l k_l \leq s$, we necessarily have $k_{l}= 1$. Moreover, for the indices $p<l$ such that $k_{p} \neq 0$, we must have $p\leq p k_p <s/2$.
                Thus, we can use \eqref{energy-constraintbis} to estimate $\| \partial^{p} U \|_{L^\infty_{x,v}}$ provided
                 $s/2 \leq 2m - {d \over 2} - 2$. Since $s\leq 2m-2$, this is verified thanks 
                 to the assumption that $ 2 m >2+d$.
               We thus obtain   that
               $$   \left\| \int  \mathcal{R}_{e,s, \alpha}^{k_{0}, \cdots k_{s}} \, dv\right\|_{L^2_{x}} \leq \Lambda (T,R) \left  \| \int \partial^{{l}} U
                \partial_{v}^e  \partial^\alpha f \, dv \right \|_{L^2_{x}}.
                $$
                 Next, we can use that 
               \begin{align*}
                 \left  \| \int \partial^{{l}} U
                \partial_{v}^e  \partial^\alpha f \right \|_{L^2_{x}}  & \lesssim \left \| \|{ 1 \over (1 + |v|^2)^{r \over 2 } } \partial^{{l}} U \|_{L^2_{v}} \|  (1 + |v|^2)^{r \over 2}\partial_{v}^e \partial^\alpha f \|_{L^2_{v}} \right \|_{L^2_{x}} \\
                &   \lesssim  \|  U \|_{\Hc_{-r}^{2m-2}} \sup_{x} \|   (1 + |v|^2)^{r \over 2} \partial^e_v \partial^\alpha f \|_{L^2_{v}}.
                \end{align*}
                  By Sobolev embedding in $x$, we have
                  $$   \sup_{x} \|  (1 + |v|^2)^{r \over 2}\partial^e_v \partial^\alpha f \|_{L^2_{v}} \lesssim \|f\|_{\Hc^{2m-1}_{r}}$$
                   as soon as  $2m-1> 1 + | \alpha | + {d \over 2}=  1 + 2m-1 - s + {d \over 2}$ which  is equivalent to 
                    $s > 1 + {d \over 2}$. Since we are in the case where $s    \geq 2m- 2 - {d \over 2}$, the condition is matched since $ 2 m >3+d$.
                     Consequently, by using \eqref{eq-energy} and \eqref{energy-constraint}, we obtain again  that
               $$  \left\| \int  \mathcal{R}_{e,s, \alpha}^{k_{0}, \cdots k_{s}} \, dv\right\|_{L^2_{x}} \lesssim \Lambda(T, R).$$
                Finally, it remains to handle the case where $k_{l}=0$ for every $l>s/2$. Due to the assumption that  $2m>3+d$, we  have by the same argument as above 
                 that  since $s\leq 2m-2$, 
                 we necessarily have  ${s \over 2} < 2 m - {d \over 2} - 2$ 
                  and hence by using again \eqref{energy-constraintbis} we  find
                  $$ \|\partial^l U \|_{L^\infty_{x,v}} \leq \Lambda (T, R), \quad l \leq s/2.$$
                   We deduce
                   $$   \left\| \int  \mathcal{R}_{e,s, \alpha}^{k_{0}, \cdots k_{s}} \, dv\right\|_{L^2_{x}} 
                    \leq \Lambda (T, R)  \|f \|_{\Hc^{2m-1}_{r}} \leq \Lambda(T, R).$$
           This ends the proof of \eqref{rest1}.

\bigskip

To prove \eqref{eq-fij}, \eqref{restF},  we apply $L^{(i,j)}$ to \eqref{VP} and  use  the identity \eqref{LT}.
  We get for $m \geq 2$, the expression for  the source term $F_{i,j}$ 
  \begin{equation}
  \label{Fdef}
   F_{i,j}= - \left( F_{1}+ F_{2} + F_{3} + F_{4} \right)
   \end{equation}
   where 
  \begin{align}
  \label{F1def}
 F_{1}&= \sum_{k=2}^{m-1} L_{i_{1}, j_{1} } \cdots L_{i_{m-k}, j_{m-k} } \left(( \partial^2_{x_{i_{m-k+1} }, x_{j_{m-k+1} } } E )\cdot \nabla_{v} L_{i_{m-k+2},j_{m-k+2} } \cdots L_{i_{m}, j_{m} } f \right), \\
   \label{F2def}
   F_{2}&=  L_{i_{1}, j_{1}}\cdots L_{i_{m-1},  j_{m-1}} \left( \partial^2_{x_{i_{m}}, x_{j_{m}}} E \cdot \nabla_{v} f\right) - \partial_{x}^{\alpha(i,j)} E \cdot \nabla_{v} f, \\
        \label{F3def}
       F_{3}&=  \sum_{k=2}^{m-1} L_{i_{1}, j_{1} } \cdots L_{i_{m-k},  j_{m-k}} \left(\sum_{k',l'} \varphi^{i_{m-k+1}, j_{m-k+1} }_{k',l'} L_{k',l'} L_{i_{m-k+2},j_{m-k+2} } \cdots L_{i_{m}, j_{m} } f \right)\\
      & \qquad \qquad   \qquad \qquad -  \sum_{k=2}^{m-1} \sum_{k',l'} \varphi^{i_{m-k+1}, j_{m-k+1} }_{k',l'}  L^{(i_{m-k+1 \to k'}, j_{m-k+1 \to l'})} f 
         \label{F4def}, \nonumber \\
      F_{4}&=  L_{i_{1}, j_{1}}\cdots L_{i_{m-1},  j_{m-1}}\left( \sum_{k,l} \varphi^{i_m,j_m}_{k,l} L_{k,l} f \right) - \sum_{k,l} \varphi^{i_m,j_m}_{k,l} L^{(i_{m \to k}, j_{m \to l})} .
   \end{align}

   \bigskip
   
\noindent   {\bf Estimate of ${\bf F_{1}}$.}
We shall first study the estimate for $F_{1}$. We have to estimate terms under the form
\begin{equation}
\label{termsF1}
 F_{1, k}= L^{m-k} G_{k}, \quad G_{k}= \partial^2 E \cdot \nabla_{v} L^{k-1}
 \end{equation}
 where we use the notation $L^{n}$ for the composition of $n$ $L_{ij}$ operators (the exact combination of the operators involved in the composition does not matter).
  Note that as in \eqref{Lmexp}, we can  develop  $L^n$ under the form
  \begin{equation}
  \label{Lpexp}
  L^n= \partial_{x}^{\alpha_{n}} +  \sum_{s=0}^{2n-2} \sum_{e, \, \alpha, \, k_{0}  \cdots k_{s}}  P_{s, e,\alpha}^{k_{0}}(U) P_{s,e, \alpha}^{k_{1}}(\partial U) \cdots P_{s,e,\alpha}^{k_{s}}(\partial^{s} U) \partial_{v}^e \partial^{\alpha} 
    \end{equation}
     where   $ (P_{s,e, \alpha}^{k_{i}}(X))_{0 \leq i \leq s}$ are  polynomials of degree smaller than   $k_{i}$  and $\alpha_{n}$ has length $2n$ and  the sum is  taken on indices such that
      \begin{equation}
      \label{indices2}
       |e|= 1, \, | \alpha |= 2n - 1-s, \, k_{0} + k_{1}+ \cdots k_{s} \leq n , \, k_{0} \geq 1, \, k_{1} + 2 k_{2}+ \cdots s k_{s}= s.
      \end{equation}

       Let us first establish a general useful estimate.
        We set for  any fonction $G(x,v)$
         $$ J_{p}(G)(x,v) =  \sum_{s, \, \beta,  K } J_{p, s, \beta, K}(G)$$
          with $K= (k_{0}, \cdots, k_{s})$ and  
        $$  J_{p, s, \beta, K} (G) (x,v)=  P^{k_{0}}_{s, \beta}(U) P^{k_{1}}_{s, \beta}(\partial U) \cdots P^{k_{s}}_{s, \beta}(\partial^{s} U)  \partial^{\beta}  G$$ 
        with   $ (P^{k_{i}}_{s, \beta}(X))_{0 \leq i \leq s}$   polynomials of degree smaller than   $k_{i}$ and the sum is taken over indices such that 
\begin{equation}
\label{indices1}  \, | \beta |= p -s, \, k_{0} + k_{1}+ \cdots k_{s} \leq p/2 , \, k_{1} + 2 k_{2}+ \cdots s k_{s}= s, \, 0 \leq s \leq p-2.
\end{equation}
 \begin{lemma}
 \label{JseK}
  For $2m-1 \geq p  \geq 1$,  $ 2m >d + 3$, $2r>d$ and $s, \, p, \, K$ satisfying  \eqref{indices1},  we have the estimate
  \begin{equation}
  \label{estJseK}
  \|  J_{p} (G) \|_{\Hc^0_{r}} \leq \Lambda(T, R) \Bigl(  \| G \|_{\Hc^{p}_{r}} +  \sum_{ \begin{array}{ll} {\scriptstyle l \geq 2m - {d \over 2} - 2,} \\ {\scriptstyle  l+ | \alpha | \leq p, \, | \alpha | \geq 2}  \end{array}} \| \partial^l U   \partial^\alpha G \|_{\Hc^0_{r}} \Bigr).
  \end{equation}
 \end{lemma}
 \begin{proof}[Proof of Lemma~\ref{JseK}]
 
  For the terms in the sum such that $s  < 2 m - { d \over 2} - 2$, we can use \eqref{energy-constraintbis} to obtain that
 $$ \|  J_{p,s, \beta, K} (G) \|_{\Hc^0_{r}} \leq \Lambda(T, R) \| G \|_{\Hc^{p}_{r}}.$$
  When $s \geq  2 m - { d \over 2} - 2$, we first consider the terms for which in the sequence $(k_{1}, \cdots k_{s})$ the biggest index $l$ for which 
  $ k_{l} \neq 0$ is such that $l  < 2m-{d \over 2} - 2$. Then again thanks to  \eqref{energy-constraintbis}, we obtain that
  $$ \|  J_{p,s, \beta, K} (G) \|_{\Hc^0_{r}} \leq \Lambda(T, R) \| G \|_{\Hc^{p}_{r}}.$$
   When $l \geq 2m-{d\over 2} - 2$, we first observe that we necessarily have $k_{l}= 1$. Indeed if $k_{l} \geq 2$,  because of  \eqref{indices1}, we must have $ l \leq {s \over 2 }$. This is possible only if $  2m-{d\over 2} - 2 \leq   {p - 2 \over 2}  \leq  {2m-3 \over 2}$ that is to say $m \leq  {d \over 2} + 1$  and hence this is impossible. Consequently $k_{l} = 1$.
    Moreover we note that for the other indices $\tilde l$ for which  $k_{\tilde l} \neq 0$, because of \eqref{indices1}, we must have $ \tilde l k_{ \tilde l} \leq s -l k_l$, so that
    $$ \tilde l  \leq s- l \leq s - 2m + {d \over 2} + 2
     \leq {d \over 2} - 1$$ 
     and we observe that ${d \over 2} - 1 < 2m - {d \over 2} - 2$. Consequently, by another use of \eqref{energy-constraintbis}, we obtain that 
     $$  \|  J_{p,s, \beta, K} (G) \|_{\Hc^0_{r}} \leq \Lambda(T, R)   \sum_{ \begin{array}{ll} \scriptstyle{ l\geq  2m - {d \over 2} - 2,} \\ \scriptstyle{ l+ | \alpha | \leq p, \, | \alpha | \geq 2}  \end{array}} \| \partial^l U   \partial^\alpha G \|_{\Hc^0_{r}}.$$
      The fact that $| \alpha | \geq 2$ comes from \eqref{indices1}.
      This ends the proof of Lemma \ref{JseK}.
 \end{proof}
 
  We shall now estimate $F_{1, k}$. 
  Let us start with the case where  $k \geq m/2$.
 Looking at the expansion of $L^{m-k}$ given by \eqref{Lpexp}, we have to estimate terms under the form
 $ J_{2p}(G_{k})$ for $ 2p \leq  2(m-k) \leq m$.  We can thus use Lemma \ref{JseK}. Moreover, we observe that in the right hand side of \eqref{estJseK}, we  have that 
  $l \leq  2(m-k) - 2 \leq m - 2$, consequently, by assumption on $m$, we have $ l <  2m - {d \over 2 } - 2$ and hence we can estimate
   $\| \partial^l U \|_{L^\infty}$ by using \eqref{energy-constraintbis}. This yields
    $$ \| F_{1, k} \| \leq \Lambda(T,R) \| G_{k}\|_{\Hc^{2(m-k)}_{r}}, \quad k \geq m/2.$$
 Next,  we  use  \eqref{com3} with $s=2(m-k)$ and $s_0 =2m-3 \, (>d)$, and the definition of $G_{k}$ in \eqref{termsF1} to estimate the above right hand side.
  Since $ d+2 < 2 m-1$ by assumption on $m$  and $ 2(m-k) + 2 \leq 2m-1$ (since $k \geq 2$), we obtain
 \begin{align}
 \label{F1kinterm}
  \|F_{1,k}\|_{\Hc^0_{r}} & \leq   \Lambda(T,R) \big(  \| E \|_{H^{2m-1}} \|\nabla_v  L^{k-1}f \|_{\Hc^{2(m-k)}_{r } } +  \| E \|_{H^{2(m-k) + 2}} \|\nabla_v  L^{k-1}f \|_{\Hc^{2(m-k)}_{r } }  \big) \\
\nonumber &   \leq    \Lambda(T,R) \| E \|_{H^{2m-1}} \|\nabla_v  L^{k-1}f \|_{\Hc^{2(m-k)}_{r}}.
 \end{align}
  By using again \eqref{estE1}, this yields
  $$   \|F_{1,k}\|_{L^2 ([0, T], \Hc^0_{r})}  \leq   \Lambda(T,R)  \|\nabla_v  L^{k-1}f \|_{L^\infty([0, T], \Hc^{2(m-k)}_{r})}.$$
   To estimate the above right hand side, we need to estimate
   $ \partial_{x,v}^\gamma  L^{k-1}f$ with  $| \gamma | \leq 2m - 2 k + 1$.
    By  taking derivatives using  the expression  \eqref{Lpexp},  we see that we have  to estimate terms under the form
     $ J_{p}(f)$  with $p \leq 2m-1$. Using Lemma~\ref{JseK}, we thus obtain that
   $$   \|F_{1,k}\|_{L^2([0, T], \Hc^0_{r})}  \leq   \Lambda(T,R) \Bigl(  \| f \|_{L^\infty ([0, T], \Hc^{2 m-1}_{r})} + 
   \sum_{ \begin{array}{ll} \scriptstyle{ l \geq  2m - {d \over 2} - 2,} \\ \scriptstyle{ l+ | \alpha | \leq 2m-1, \, | \alpha | \geq 2}  \end{array}} \| \partial^l U   \partial^\alpha f \|_{L^\infty([0, T], \Hc^0_{r})}
     \Bigr).$$
   To estimate the right hand side, 
         since $| \alpha | \geq 2$ and $|\alpha| - 2 + l \leq 2m-3$, we can use \eqref{comdual}, to  obtain that
         $$  \| \partial^l U  ( 1 + |v|^2)^{r\over 2} \partial^\alpha f\|_{L^2_{x,v}} \lesssim \|U\|_{\Hc^{2m-3}_{-r}} \|  ( 1 + |v|^2)^{r}\partial^2 f \|_{L^\infty} +  
          \|U \|_{L^\infty} \|f \|_{\Hc^{2m-1}_{2r}}.$$
          By using again \eqref{energy-constraint}, \eqref{energy-constraintbis} and the Sobolev embedding, we finally obtain that
          \begin{equation}
          \label{F1k1} \|F_{1, k}\|_{L^2([0, T], \Hc^0_{r})}  \leq   \Lambda(T,R) \|f\|_{L^\infty([0, T], \Hc^{2m-1}_{2r})} \leq \Lambda(T, R), \quad k \geq m/2.
          \end{equation}
          
          It remains to handle the case $k \leq m/2$. Again, by using \eqref{termsF1} and the expansion \eqref{Lpexp}, we first have to estimate
           terms under the form $J_{2(m-k)}(G_{k})$. By using again Lemma \ref{JseK},  we first obtain
        $$  \|F_{1,k}\|_{L^2([0, T], \Hc^0_{r})}  \leq   \Lambda(T,R) \Bigl(  \| G_{k} \|_{L^2([0, T], \Hc^{2 (m - k)}_{r})} + 
   \sum_{ \begin{array}{ll} \scriptstyle{ l \geq 2m - {d \over 2} - 2,} \\ \scriptstyle{ l+ | \alpha | \leq 2(m - k), \, | \alpha | \geq 2}  \end{array}} \| \partial^l U   \partial^\alpha G_{k} \|_{L^2([0, T], \Hc^0_{r})}
     \Bigr).$$
    
     By using the expression in  \eqref{termsF1} for $G_{k}$, we have to estimate terms under the form 
     $$ \|   \partial^l U  \partial^\beta \partial^2 E\,  \partial^\gamma \nabla_{v} L^{k-1} f \|_{\Hc^0_{r}}$$
      with $  l \geq  2m - {d \over 2} - 2$ and $    l+ | \beta  | + | \gamma |  \leq 2(m - k).$
       Note that this implies that  $| \beta | \leq    2(m-k)-l \leq {d\over 2} + 2  - 2k \leq {d \over 2} - 2$
        since we have $k \geq 2$ (see \eqref{F1def}). In particular this yields $|\beta | + 2  + {d \over 2} < 2m-2$ and thus 
       by using again the  Sobolev embedding (in $x$) and \eqref{estE1} we obtain that  
       \begin{align*}
          \|   \partial^l U  \partial^\beta \partial^2 E \partial^\gamma \nabla_{v} L^{k-1} f \|_{\Hc^0_{r}} 
                    &\lesssim \| \rho \|_{H^{2m-1}} \| \partial^l U   \partial^\gamma \nabla_{v} L^{k-1} f \|_{\Hc^0_{r}} \\
                    &\lesssim \| f \|_{\Hc^{2m-1}_{r} }\| \partial^l U   \partial^\gamma \nabla_{v} L^{k-1} f \|_{\Hc^0_{r}} \\
          &\leq \Lambda(T, R )   \| \partial^l U   \partial^\gamma \nabla_{v} L^{k-1} f \|_{\Hc^0_{r}}.
           \end{align*}
        Thus it remains to estimate  $ \| \partial^l U   \partial^\gamma \nabla_{v} L^{k-1} f \|_{\Hc^0_{r}} $ for 
         $  l \geq  2m - {d \over 2} - 2$ and $    l+ | \gamma |  \leq 2(m - k).$
         By using again \eqref{Lpexp}, we can expand  $ \partial^\gamma \nabla_{v} L^{k-1} f$ as an expression under the form $J_{2k+ | \gamma |-1} (f)$.
          Since we have that
          $  2 k + | \gamma | - 1 \leq  1 + {d \over 2 } < 2m- {d \over 2} - 2$, we can use \eqref{energy-constraintbis} again to estimate all  the terms involving $U$
           and its derivatives in 
           in $L^\infty$, this yields
           $$   \| \partial^l U   \partial^\gamma \nabla_{v} L^{k-1} f \|_{\Hc^0_{r}} \leq \Lambda( T, R) \sum_{\tilde \gamma}  \| \partial^l U  \partial^{\tilde \gamma} f \|_{\Hc^0_{r}}$$
            with  $ |\tilde \gamma | \leq  |\gamma |+ 2 k - 1$ and thus $ l +  |\tilde \gamma | \leq 2 m - 1$ and $|\tilde \gamma | \geq 2$ (since $k \geq 2$).
            Consequently, by using again \eqref{comdual}, we obtain that 
            $$ \| \partial^l U   \partial^\gamma \nabla_{v} L^{k-1} f \|_{\Hc^0_{r}} \leq \Lambda( T, R)  \left( \| U \|_{L^\infty}
             \| f \|_{\Hc^{2m-1}_{2r}} +  \| (1 +  |v|^2)^{r} \partial^2 f \|_{L^\infty_{x,v}} \|U \|_{\Hc^{2m-3}_{-r}}\right)$$
              and we conclude finally by using \eqref{energy-constraintbis},  \eqref{energy-constraint} and  the Sobolev embedding that
           \begin{equation}
           \label{F1k2} \|F_{1, k} \|_{L^2([0, T], \Hc^0_{r})} \leq \Lambda(T, R), \quad k \leq m/2
           \end{equation}
           (actually for this case  we even have a slightly better $L^\infty$ in time estimate).
            Looking at \eqref{F1k1}, \eqref{F1k2}, we have thus proven that
            \begin{equation}
            \label{estimationFk1}
              \|F_1 \|_{L^2([0, T], \Hc^0_{r})} \leq \Lambda(T, R).
              \end{equation}
       \noindent    {\bf Estimate of ${\bf F_2}$.}
           We shall now turn to the study of $F_{2}$.
            By using \eqref{Lpexp} again, we can expand under the form
            $$ F_{2}= \mathcal{C} + \sum_{|e|= 1} J_{2m-3}(\partial_{v}^e( \partial^2 E \cdot \nabla_{v} f))$$
             with 
            $$ \mathcal{C}= \partial_{x}^{\alpha(\tilde \imath, \tilde \jmath)} \left( \partial^2_{x_{i_{m}}, x_{j_{m}}} E \cdot \nabla_{v} f \right) -  \partial_{x}^{ \alpha (i,j)} E \cdot \nabla_{v} f
            = [ \partial_{x}^{\alpha(\tilde \imath, \tilde \jmath)}, \nabla_{v} f] \cdot\partial^2 E.$$
            where $ \alpha(\tilde \imath, \tilde \jmath)$ has length $2m-2$.
             Note that this time, we have really used that in the expansion \eqref{Lpexp}, the terms in the sum always involve at least one $v$ derivative.
   By using Lemma \ref{JseK}, we get that
   \begin{multline*} \|J_{2m-3}( \partial^e_{v}( \partial^2 E \cdot \nabla_{v} f))\|_{\Hc^0_{r}} \leq \Lambda(T,R)\Big(
     \| \partial^e_{v}( \partial^2 E \cdot \nabla_{v} f )\|_{\Hc^{2m-3}_{r}} 
   \\  +   \sum_{ \begin{array}{ll} \scriptstyle{ l \geq 2m - {d \over 2} - 2,} \\ \scriptstyle{ l+ | \alpha | \leq 2m-3, \, | \alpha | \geq 2}  \end{array}} \| \partial^l U   \partial^\alpha \partial^e_{v}(
    \partial^2 E \cdot \nabla_{v} f)\|_{\Hc^0_{r}}   \Bigr).  
    \end{multline*}  
    To estimate the first term, we can use \eqref{com3}  and   \eqref{estE1} to obtain that 
    $$  \| \partial_{v}( \partial^2 E \cdot \nabla_{v} f )\|_{\Hc^{2m-3}_{r}} \leq  \|  \partial^2 E \cdot \partial^2_{v} f \|_{\Hc^{2m-3}_{r}} 
     \lesssim \| E \|_{H^{2m-1}} \| f \|_{ \Hc^{2m-1}_{r}}$$
     and hence we can take the $L^2$ norm in time and use   \eqref{estE1} to obtain that
     \begin{equation}
     \label{provisoirebis}
        \| \partial_{v}( \partial^2 E \cdot \nabla_{v} f )\|_{L^2([0, T], \Hc^{2m-3}_{r})} \leq  \Lambda(T, R).
        \end{equation}
      To estimate the terms in the sum, we use again \eqref{comdual}, \eqref{energy-constraintbis} and the Sobolev embedding to write
      \begin{align*}  \| \partial^l U   \partial^\alpha \partial_{v}(
    \partial^2 E \cdot \nabla_{v} f)\|_{\Hc^0_{r}} 
    &  \lesssim  \|U\|_{L^\infty} \| \partial_{v} ( \partial^2 E \cdot \nabla_{v} f )\|_{\Hc^{2m-3}_{2r}} 
      + \| ( 1 + |v|^2)^{r} \partial^2 E \partial_{v}^2 f \|_{L^\infty_{x,v}} \|U \|_{\Hc^{2m-3}_{-r}} \\
    &    \leq \Lambda(T,R)  \| \partial_{v} ( \partial^2 E \cdot \nabla_{v} f )\|_{\Hc^{2m-3}_{2r}}  +   \Lambda(T,R) \| \partial^2 E \|_{L^\infty}.
    \end{align*}
     Therefore, we get from \eqref{estE1} and the Sobolev embedding  in $ x$ a bound by
     $$(1+ \| \partial_{v} ( \partial^2 E \cdot \nabla_{v} f )\|_{L^2([0, T], \Hc^{2m-3}_{2r})}
    ) \Lambda(T, R).$$ 
             By using \eqref{provisoirebis} (which is still true with $r$ changed into $2r$), we finally obtain that
       \begin{equation}
       \label{estimF21}  \|J_{2m-3}( \partial_{v} (\partial^2 E \cdot \nabla_{v} f))\|_{L^2([0, T], \Hc^0_{r})} \leq \Lambda(T,R).
       \end{equation}
        It remains to estimate $\mathcal{C}$. By expanding the commutator,  we have to estimate  terms of the form $\| ( 1 + |v|^2) ^{r\over 2} \partial_{x}^\beta \nabla_{v}f \cdot \partial_{x}^\gamma \partial^2 E \|_{L^2_{x,v}}$
          with $ |\beta| + | \gamma | \leq 2m-2, \, \beta \neq0$.
           If $ | \gamma | + 2 + {d \over 2} < 2m-1$, by using Sobolev embedding in $x$  and \eqref{estE1}, we obtain
           $$ \| ( 1 + |v|^2) ^{r\over 2} \partial_{x}^\beta \nabla_{v}f \cdot \partial_{x}^\gamma \partial^2 E \|_{L^2([0, T], L^2_{x,v})} 
            \leq \Lambda(T, R) \| f \|_{L^\infty([0, T], \Hc^{2m-1}_{r})}  \leq \Lambda(T,R).$$
              Otherwise, since $ | \beta | + 1 + {d \over 2} \leq   2 + d < 2m- 1$ and $ | \gamma | + 2 \leq 2m-1$, we  get
               that 
           \begin{align*}\| ( 1 + |v|^2) ^{r\over 2} \partial_{x}^\beta \nabla_{v}f \cdot \partial_{x}^\gamma \partial^2 E \|_{L^2([0, T],L^2_{x,v})}  
            &   \leq  \sup_{[0, T]}\sup_{x} \Big( \int_{\mathbb{R}^d} | ( 1 + |v|^2) ^{r\over 2} \partial_{x}^\beta \nabla_{v}f |^2 \, dv \Big)^{1 \over 2}
               \|E \|_{L^2([0, T], H^{2m-1}_{x})} \\
               & \leq \Lambda(T, R).
               \end{align*}
            We have thus obtained that
           $$ \| \mathcal{C} \|_{L^2([0, T], \Hc^0_{r})} \leq \Lambda(T,R).$$
            By collecting the last estimate and \eqref{estimF21}, we actually get that
           \begin{equation}
           \label{estimationF2} \|F_{2}\|_{L^2([0, T], \Hc^0_{r})} \leq \Lambda(T,R).
           \end{equation}
          This ends the proof of 
          \eqref{restF}.
          
    
    \noindent    {\bf Estimate of ${\bf F_{3}}$ and ${\bf F_{4}}$.} By using similar arguments,  we  also obtain that 
         \begin{equation}
         \|F_{3}\|_{L^2([0, T], \Hc^0_{r})} +   \|F_{4}\|_{L^2([0, T],\Hc^0_{r})} \leq \Lambda(T,R).
           \end{equation}
    \end{proof}

   \subsection{Straightening the transport vector field}
   \label{secburgers}
    We shall now study the equation \eqref{eq-fij} and try to get an estimate of $ \int_{\mathbb{R}^d} f_{i,j}\, dv$ which, in view
     of \eqref{reduit}, can be used to estimate $\partial_{x}^{2m} \rho$.
   The next step consists in using a change of variables in order to straighten the  transport vector field  and more precisely
   to come down from the full transport operator $\mathcal{T}$ to a twisted free transport operator
   of the form
   $$
   \partial_t + \Phi(t,x,v) \cdot \nabla_v.
   $$
   This is the purpose of the following lemma.
       \begin{lemma}
       \label{lemburg}
       Let $f_{i,j}$ be a smooth function satisfying the equation \eqref{eq-fij}.
    Consider $\Phi(t,x,v)$ a smooth solution to the Burgers equation
    \begin{equation}
    \label{eq-Burgers}
    \pa_t \Phi + \Phi \cdot \nabla_x \Phi = E. 
    \end{equation}
    such that the Jacobian matrix $(\na_v \Phi)$ is invertible.
    Then defining
        \begin{equation}
        g_{i,j}(t,x,v) := f_{i,j}(t,x, \Phi),
        \end{equation}
        we obtain that $g_{i,j}$ satisfies the equation
            \begin{equation} 
            \label{eq-straight}
            \pa_t g_{i,j} + \Phi \cdot \na_x g_{i,j} +  \pa_x^{\alpha(i,j)} E \cdot (\na_v f )(t,x, \Phi) = F(t,x,\Phi).
   \end{equation}
   
    \end{lemma}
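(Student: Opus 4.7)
The plan is to carry out a direct chain-rule computation and then exploit the structure of the Burgers equation. Writing $g_{i,j}(t,x,v) = f_{i,j}(t,x,\Phi(t,x,v))$, I would first compute
\[
\pa_t g_{i,j} = (\pa_t f_{i,j})(t,x,\Phi) + (\na_v f_{i,j})(t,x,\Phi) \cdot \pa_t \Phi,
\]
and componentwise
\[
\pa_{x_k} g_{i,j} = (\pa_{x_k} f_{i,j})(t,x,\Phi) + (\na_v f_{i,j})(t,x,\Phi) \cdot \pa_{x_k} \Phi.
\]
Summing $\Phi_k \pa_{x_k} g_{i,j}$ in $k$ and adding $\pa_t g_{i,j}$ yields
\[
\pa_t g_{i,j} + \Phi \cdot \na_x g_{i,j}  = \big(\pa_t f_{i,j} + \Phi \cdot \na_x f_{i,j}\big)(t,x,\Phi) + (\na_v f_{i,j})(t,x,\Phi) \cdot \big(\pa_t \Phi + \Phi \cdot \na_x \Phi\big).
\]

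The crucial step is then to invoke the Burgers equation \eqref{eq-Burgers}, which precisely substitutes $E(t,x)$ for $\pa_t \Phi + \Phi \cdot \na_x \Phi$ in the last factor. This produces
\[
\pa_t g_{i,j} + \Phi \cdot \na_x g_{i,j} = \big(\pa_t f_{i,j} + \Phi \cdot \na_x f_{i,j} + E \cdot \na_v f_{i,j}\big)(t,x,\Phi) = (\Tc f_{i,j})(t,x,\Phi),
\]
since $\Tc = \pa_t + v \cdot \na_x + E(t,x) \cdot \na_v$ and evaluating at $v = \Phi(t,x,v)$ simply replaces the kinetic variable by $\Phi$ (while leaving $E(t,x)$ unchanged).

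Finally, I would use the equation \eqref{eq-fij} satisfied by $f_{i,j}$ to express $\Tc f_{i,j} = F_{i,j} - \pa_x^{\alpha(i,j)} E \cdot \na_v f - \Mc_{i,j} \Fc$, and evaluate this identity at $(t,x,\Phi)$. Rearranging the resulting formula yields exactly \eqref{eq-straight}, where the source term on the right-hand side absorbs the contributions of both $F_{i,j}$ and $\Mc_{i,j} \Fc$ evaluated along $\Phi$. There is no real obstacle in this proof: the content lies entirely in recognizing that the Burgers equation is precisely the right ODE to impose on the auxiliary field $\Phi$ so as to cancel the electric-field contribution in the advection operator, leaving the simpler transport field $\pa_t + \Phi \cdot \na_x$ acting on $g_{i,j}$. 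The invertibility of $\na_v \Phi$, which holds for small times since $\Phi \approx v$, will be needed later to return from $g_{i,j}$ to quantities involving $f_{i,j}$, but it is not used in the derivation of \eqref{eq-straight} itself.
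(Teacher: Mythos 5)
Your proof is correct and follows essentially the same route as the paper: chain rule, substitution of the Burgers equation to cancel the field term, then insertion of \eqref{eq-fij} composed with $\Phi$. The only cosmetic difference is that the paper rewrites $(\na_v f_{i,j})(t,x,\Phi)$ as $\,^t(\na_v\Phi)^{-1}\na_v g_{i,j}$, which is why the invertibility of $\na_v\Phi$ appears in their presentation of the identity; your observation that this invertibility is not actually needed to derive \eqref{eq-straight} is accurate.
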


       \begin{proof}[Proof of Lemma~\ref{lemburg}]
    This follows from a direct computation. We can check that
   \begin{align*}
                \pa_t g_{i,j} + \Phi \cdot \na_x g_{i,j} &+  \pa_x^{\alpha(i,j)}  E \cdot (\na_v f )(t,x, \Phi) \\
                &= F(t,x,\Phi) 
                +  \,^t(\na_v \Phi)^{-1} \na_v g_{i,j} \cdot \left(  \pa_t \Phi + \Phi \cdot \nabla_x \Phi - E \right).
        \end{align*}
              This yields \eqref{eq-straight}, because of \eqref{eq-Burgers}.
    \end{proof}
   
   We shall now establish Sobolev estimates for the solutions of the Burgers equation \eqref{eq-Burgers}. 
   Choosing the initial condition $\Phi|_{t=0} =v$, we will  obtain a control on the deviation from $v$ in Sobolev norms and in  particular, observe that $\Phi(t,x,v)$ remains close to $v$ for small enough times.
 
         \begin{lemma}
         \label{lemburgers}
          Assuming that $2m>3+d$,
there exists  $ \tilde{T}_0= \tilde T_{0}(R)>0$ independent of $\eps$  such that for every $T < \min (T_{0},  \tilde T_{0}, T^\eps)$, there is a unique smooth solution on $[0,T]$ of \eqref{eq-Burgers} together with the initial condition $\Phi|_{t=0} =v$. 

Moreover, we have the following uniform estimates for every   $T  <  \min (T_{0},  \tilde T_{0}, T^\eps)$
 \begin{equation}
 \label{estim-Burgers-infini}
  \sup_{[0,T]} \| \Phi  - v\|_{W^{k,\infty}_{x,v}}  +  \sup_{[0, T]} \left\| { 1 \over (1 + |v|^2)^{1 \over 2}} \partial_{t} \Phi \right\|_{W^{k-1, \infty}_{x,v}} \leq  T^{1 \over 2} \Lambda(T,R), \quad k < 2m-d/2-1.
\end{equation}
Furthermore, we also have that  for every  $| \alpha | \leq 2m-1$ and $|\beta| \leq 2m-2,$
 \begin{equation}
 \label{estim-Burgers}
 \begin{aligned}
  \sup_{[0,T]} \sup_{v}   &\| \partial_{x, v}^\alpha ( \Phi  - v)\|_{ L^2_{x}} + 
   \sup_{[0,T]} \sup_{v}  \left\| { 1 \over (1 + |v|^2)^{1 \over 2}} \partial_{x, v}^\beta \partial_{t} \Phi  \right\|_{ L^2_{x}}   \leq T^{1 \over 2} 
    \Lambda(T, R).
  \end{aligned}
  \end{equation}
    \end{lemma}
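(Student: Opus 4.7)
\medskip

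\textbf{Setup.} The natural plan is to set $\Psi := \Phi - v$ and study
$$ \partial_{t} \Psi + (v + \Psi)\cdot \nabla_{x} \Psi = E, \qquad \Psi|_{t=0} = 0. $$
This is a semilinear first--order system in $x$, with $v$ playing the role of a parameter entering only through the inhomogeneity $v \cdot \nabla_{x}\Psi$ in the transport part and through the initial condition. Local existence and uniqueness in $\mathcal{C}([0,T],W^{k,\infty}_{x,v})$ for some appropriate $k$ follow by standard Picard iteration: the input is $E \in L^{2}([0,T^{\eps}),H^{2m-1}_{x})$ which, since $\|E\|_{H^{2m-1}} \lesssim \|\rho\|_{H^{2m}}$ (no elliptic gain needed, hence uniform in $\eps$), is controlled by $R$ via the bootstrap assumption. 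The condition $2m>3+d$ ensures through Sobolev embedding that $\|E\|_{L^\infty_{x}}$ and some of its derivatives are controlled, which is enough to carry out the fixed point.

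\textbf{$L^{\infty}$ estimates.} For $|\alpha|+|\beta| = k < 2m-d/2-1$, apply $\partial_{x}^{\alpha}\partial_{v}^{\beta}$ and commute with the transport operator $\partial_{t}+(v+\Psi)\cdot\nabla_{x}$. Since $v\cdot\nabla_{x}$ does not commute with $\partial_{v}$, each such commutator produces a harmless term $\nabla_{x}\partial_{x}^{\alpha}\partial_{v}^{\beta'}\Psi$ with $|\beta'|<|\beta|$, which is handled inductively on $|\beta|$. The remaining commutators of the form $[\partial_{x}^{\alpha}\partial_{v}^{\beta},\Psi\cdot\nabla_{x}]\Psi$ are at least quadratic in $\Psi$ and its derivatives, hence absorbable by smallness. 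Applying the maximum principle along the characteristics $\dot X = v+\Psi$, one obtains
$$ \|\partial^{\alpha}_{x}\partial^{\beta}_{v}\Psi(t)\|_{L^\infty_{x,v}} \lesssim \int_{0}^{t}\bigl(\|\partial^{\alpha}_{x}E\|_{L^\infty_{x}} + \Lambda(T,R)\|\Psi\|_{W^{k,\infty}_{x,v}}\bigr)\,ds.$$
Since $k < 2m-d/2-1$ gives $\|\partial^{\alpha}_{x} E\|_{L^\infty_{x}} \lesssim \|E\|_{H^{2m-1}} \lesssim \|\rho\|_{H^{2m}}$, Cauchy--Schwarz in time and the bootstrap bound produce a contribution $\lesssim T^{1/2} R$. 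A Gronwall argument closes the estimate for $T$ small enough relative to $R$. The bound on $(1+|v|^{2})^{-1/2}\partial_{t}\Phi$ then follows from the equation $\partial_{t}\Phi = E - (v+\Psi)\cdot\nabla_{x}\Psi$: the weight absorbs the factor $v$ arising from the transport term, and at most $k-1$ derivatives of $\Psi$ enter.

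\textbf{$L^{2}_{x}$ estimates.} For fixed $v$, apply $\partial^{\alpha}_{x,v}$ with $|\alpha|\leq 2m-1$, multiply by $\partial^{\alpha}_{x,v}\Psi$ and integrate over $\mathbb{T}^{d}$. Since the advection speed is $v+\Psi$, integration by parts in the transport term costs only $\|\nabla_{x}\cdot\Psi\|_{L^\infty}$, already controlled by the preceding step. The commutator $[\partial^{\alpha}_{x,v},(v+\Psi)\cdot\nabla_{x}]\Psi$ is estimated via Moser--type product inequalities in the $x$ variable (treating $v$ as parameter), using Lemma~\ref{lemprod} and the already established $L^\infty$ bounds on low-order derivatives of $\Psi$. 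The source contribution is bounded by $\|\partial^{\alpha}_{x}E\|_{L^{2}_{x}}\lesssim \|\rho\|_{H^{2m}}$. Integrating in time and applying Cauchy--Schwarz yields the factor $T^{1/2}\Lambda(T,R)$ claimed in \eqref{estim-Burgers}. The weighted $\partial_{t}\Phi$ bound at order $2m-2$ follows from the equation in the same way as above, one derivative being lost because of the $\nabla_{x}\Psi$ term.

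\textbf{Main obstacle.} The chief subtlety is the simultaneous control of $x$ and $v$ derivatives: each $\partial_{v}$ falling on $v\cdot\nabla_{x}\Psi$ produces an extra $\nabla_{x}\Psi$ factor, so the estimates must be organized by induction on $|\beta|$, keeping careful track of the weight $(1+|v|^{2})^{1/2}$ that appears whenever one commits to reading $\partial_{t}\Phi$ from the equation rather than controlling it directly. The regularity thresholds $k<2m-d/2-1$, $|\alpha|\leq 2m-1$, $|\beta|\leq 2m-2$ are all sharply dictated by this bookkeeping together with the requirement that only $\|\rho\|_{H^{2m}}$ be used on $E$ (to avoid any $1/\eps$ loss), as opposed to $\|E\|_{H^{2m}}$.
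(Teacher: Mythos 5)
Your argument matches the paper's own proof almost line for line: set $\phi=\Phi-v$, derive the semilinear transport equation for $\phi$, prove the $W^{k,\infty}_{x,v}$ bound by commuting derivatives with the transport operator and using the maximum principle plus Gronwall with $\|E\|_{W^{k,\infty}}\lesssim\|E\|_{H^{2m-1}}\lesssim\|\rho\|_{H^{2m}}$ for $k<2m-d/2-1$, prove the $L^\infty_v L^2_x$ bound of order $2m-1$ by energy estimates with the $L^\infty$ bounds feeding the Moser-type commutator estimates, and read off $\partial_t\Phi$ from the equation in both cases. The only cosmetic point is that the term $\nabla_x\partial_x^\alpha\partial_v^{\beta'}\Psi$ produced by $[\partial_v^\beta, v\cdot\nabla_x]$ is of the same total order $k$ (not lower order), so rather than a genuine induction on $|\beta|$ one simply observes that it is already controlled by $M_k(T)$ and enters the Gronwall argument, which is exactly what the paper does.
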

   
        \begin{proof}[Proof of Lemma~\ref{lemburgers}]
     Let us set $\phi= \Phi -v$. We observe that $ \phi$ solves
     $$  \pa_t  \phi + (v+ \phi) \cdot \nabla_{x}  \phi  = E$$
      with zero initial data. For any $\a \in \N^d$, applying $\pa^\alpha$ to the equation, we get that  $\pa^\a \phi= \partial_{x,v}^\alpha \phi$ satisfies
\begin{align}
\label{eq-comm}
\pa_t \pa^\a \phi +  v \cdot \na_x \pa^\a \phi +  \phi \cdot \na_{x} \pa^\a \phi = \pa^\a E  - \Big[ \sum_{\b + \g \leq \a \atop \g \neq \a }  c_{\b,\g} \pa^\b \phi \cdot \na_{x} \pa^\g \phi \Big]
 - [ \partial^\alpha, v] \cdot \nabla_{x} \phi.
\end{align}

       Let us set $M_{k}(T)= \sup_{[0, T]} \| \phi \|_{W^{k, \infty}_{x,v}}$.
      Using $L^\infty$ estimates for the  transport operator, we obtain from \eqref{eq-comm}  that
      $$  M_{k}(T) \lesssim  T( 1  + M_{k}(T)) M_{k}(T)  + \int_{0}^T \| E \|_{W^{k, \infty}} \, dt.$$
      Since $k + {d \over 2} < 2m-1$, we get by Sobolev embedding in the $x$ variable and \eqref{estE1} that
      $$ M_{k}(T) \lesssim  T( 1  + M_{k}(T)) M_{k}(T)  + T^{1 \over 2} R.$$
       Consequently, for $\tilde T_{0}$ sufficiently small depending only on $R$, we obtain that
       $$  M_{k}(T)  \leq  T^{1 \over 2} \Lambda(T, R).$$
        This proves  the first part of \eqref{estim-Burgers-infini}.
         For the estimate on the time derivative, it suffices to use the equation \eqref{eq-Burgers} and  the estimate we have  just obtained.
         
         It remains to prove \eqref{estim-Burgers}.
%
%
We proceed by energy estimates.
Using \eqref{eq-comm} for $|\a| \leq 2m-1$,  multiplying by  $\pa^\alpha \phi$ and integrating in $x$, we obtain from a standard energy estimate ($v$ being only a parameter for the moment)
\begin{align*}
\frac{d}{dt}  \| \partial^\alpha \phi \|_{L^2_{x}}  \lesssim  ( 1 + \| \partial_{x} \phi \|_{L^\infty_{x}} ) \sum_{| \alpha | \leq 2m-1} \| \partial^\alpha \phi \|_{L^2_{x}} + \| E\|_{H^{2m-1}} + \| \mathcal C\|_{L^2_{x}}   \end{align*}
where $\mathcal C$ is the commutator term
$$ \mathcal{C}=  \Big[ \sum_{\b + \g \leq \a \atop \g \neq \a }  c_{\b,\g} \pa^\b \phi \cdot \na_{x} \pa^\g \phi \Big].$$
 Let us set
 $$ Q_{2m-1}(T, \phi) = \sup_{[0, T]} \sup_{v} \Big( \sum_{|\alpha| \leq 2m-1} \| \partial^\alpha \phi \|_{L^2_{x}} \Big).$$
We can then integrate in time and take the sup in  time and $v$ to obtain that
$$ Q_{2m-1}(T, \phi) \lesssim  Q_{2m-1}(T, \phi)  \left( T+  \int_{0}^T \| \partial_{x} \phi \|_{L^\infty_{x,v}} dt\right)+  \int_{0}^T\|\mathcal C\|_{L^\infty_{v} L^2_{x}} \, dt +  T^{1 \over 2} \| \rho\|_{L^2([0, T],  H^{2m})}$$
 where the last term comes from another use of \eqref{estE1}.
  From \eqref{estim-Burgers-infini}, we already have that 
  $$  \| \partial_{x} \phi \|_{L^\infty_{x,v}} \leq T^{1 \over 2} \Lambda(T, R),$$
  thus it only remains to estimate
   the commutator term $\mathcal{C}$.   For the terms in the sum such that $|\beta|< 2m- {d \over 2} - 1 $, we can use  \eqref{estim-Burgers-infini}  and the fact
    that $|\gamma |<| \alpha|$ to obtain that 
    $$ \| \pa^\b \phi \cdot \na \pa^\g \phi \|_{L^\infty_{v} L^2_{x}} \lesssim  T^{1 \over 2} \Lambda (T,R) Q_{2m-1}(T, \phi).$$
     In a similar way, when $|\beta| \geq 2m- {d \over 2} - 1$, we observe that $ 1 + | \gamma | \leq  {d \over 2} < 2m - {d \over 2} - 1$
      consequently, by using again \eqref{estim-Burgers-infini}, we also  obtain that
      $$  \| \pa^\b \phi \cdot \na \pa^\g \phi \|_{L^\infty_{v} L^2_{x}} \lesssim  T^{1 \over 2} \Lambda (T,R) Q_{2m-1}(T, \phi).$$
       This yields
       $$  \| \mathcal{C}\|_{L^\infty_{v} L^2_{x}} \lesssim  T^{1 \over 2} \Lambda (T,R) Q_{2m-1}(T, \phi)$$
        and hence that
        $$  Q_{2m-1}(T, \phi) \lesssim ( T  +  T^{3 \over 2 } \Lambda(T,R)) Q_{2m-1}(T, \phi)  +  T^{1 \over 2}  R.$$
         By taking $\tilde T_{0}$ small enough (depending on $R$ only), we finally obtain that
         $$  Q_{2m-1}(T, \phi) \lesssim  T^{1 \over 2}  \Lambda(T, R) $$
          and hence  the first part of  \eqref{estim-Burgers} is proven. Again the estimate on the time derivative
           follows by using the equation \eqref{eq-Burgers} and the previous estimates.

  \end{proof}
    
    By a change of variable,  we can then easily  relate the average in $v$ of $f_{ij}$ to a weighted average of $g_{i,j}$ :
    \begin{lemma}
    \label{lemequiv}
    We have
         \begin{equation}
         \label{formule-equiv}
 \int_{\R^d} g_{i,j}  J\, dv = \pa_x^{\a(i,j)} \rho + \mathcal R,
 \end{equation}  
 where $J(t,x,v) := |\det \na_v \Phi(t,x,v)|$ 
 and  $\mathcal{R}$  still satisfies the estimate  \eqref{rest1}.
    
\end{lemma}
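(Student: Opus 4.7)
The plan is to reduce \eqref{formule-equiv} to \eqref{reduit} by a pointwise (in $(t,x)$) change of variables in the velocity integral. Indeed, by the definition $g_{i,j}(t,x,v) = f_{i,j}(t,x,\Phi(t,x,v))$ and the definition of $J$, we have formally
\[
\int_{\R^d} g_{i,j}(t,x,v)\, J(t,x,v)\, dv
= \int_{\R^d} f_{i,j}(t,x,\Phi(t,x,v))\, |\det \na_v \Phi(t,x,v)|\, dv
= \int_{\R^d} f_{i,j}(t,x,w)\, dw,
\]
provided the map $w = \Phi(t,x,\cdot) : \R^d \to \R^d$ is a $\mathcal{C}^1$-diffeomorphism. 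Once this change of variables is justified, we are done: Lemma~\ref{lemrestes}, equation \eqref{reduit}, gives
\[
\int_{\R^d} f_{i,j}(t,x,w)\, dw = \pa_x^{\a(i,j)} \rho + \mathcal{R}
\]
with $\mathcal{R}$ controlled as in \eqref{rest1}.

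Thus the only real step is to verify that $v \mapsto \Phi(t,x,v)$ is a global diffeomorphism of $\R^d$ onto itself for all $(t,x) \in [0,T] \times \T^d$, with $T < \min(T_0, \tilde T_0, T^\eps)$. For this I would combine two ingredients from Lemma~\ref{lemburgers}. First, applying \eqref{estim-Burgers-infini} with $k=1$ (which is allowed since $2m>3+d$ gives $1 < 2m - d/2 - 1$), one obtains
\[
\sup_{[0,T]} \| \na_v \Phi - I \|_{L^\infty_{x,v}} \leq T^{1/2} \Lambda(T,R),
\]
so that, taking $T$ even smaller if necessary (still depending only on $R$), $\na_v \Phi(t,x,v)$ stays uniformly close to the identity; in particular $\na_v \Phi$ is everywhere invertible with uniformly bounded inverse, so $\Phi(t,x,\cdot)$ is a local $\mathcal{C}^1$-diffeomorphism. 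Second, the same estimate yields $|\Phi(t,x,v) - v| \leq T^{1/2}\Lambda(T,R)$ uniformly in $(x,v)$, hence $\Phi(t,x,\cdot)$ is proper. A proper local diffeomorphism from $\R^d$ to $\R^d$ is a covering map, and since $\R^d$ is simply connected it is in fact a global diffeomorphism (Hadamard's global inversion theorem). This legitimates the change of variables.

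The main (mild) obstacle is this global invertibility verification; the rest is a one-line identity followed by a direct appeal to \eqref{reduit}. Note that $\mathcal{R}$ inherits exactly the bound \eqref{rest1} from Lemma~\ref{lemrestes} since we recover the same integral $\int f_{i,j}\, dw$ after the substitution.
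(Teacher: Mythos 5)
Your proposal is correct and follows exactly the approach the paper intends (the paper states Lemma~\ref{lemequiv} without proof, preceded only by the remark ``By a change of variable, we can then easily relate\dots''). Changing variables $w = \Phi(t,x,v)$ in $\int g_{i,j} J\, dv$ and then invoking \eqref{reduit} is precisely the argument, and the remainder $\mathcal{R}$ is the same one as in \eqref{reduit}, so \eqref{rest1} carries over verbatim. The only part you add is the justification that $v \mapsto \Phi(t,x,v)$ is a global $\mathcal{C}^1$-diffeomorphism of $\R^d$: your Hadamard-type properness argument works, though the paper's own style elsewhere (cf.\ the construction of $\Psi$ in Lemma~\ref{redressement2}) would invoke the simpler fact that, once $\|\na_v(\Phi - v)\|_{L^\infty_{x,v}} < 1$ for $T$ small, the map $v \mapsto v + (\Phi(t,x,v)-v)$ is a Lipschitz-small perturbation of the identity and hence a bijection by the Banach fixed-point theorem, with the inverse $\mathcal{C}^1$ by the inverse function theorem. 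Both routes deliver the same conclusion; yours is just a little heavier machinery for the same payoff.
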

    
   \section{Proof of Theorem~\ref{theomain}: estimate of $\| \rho \|_{L^2([0, t], H^{2m})}$ by  using the Penrose stability condition}
  \label{secH2m}
  
    Following the reduction of the previous section (from which we keep the same notations), we shall now study the system of equations
      \begin{equation} 
      \label{galphabeta}
            \pa_t g_{i,j} + \Phi \cdot \na_x g_{i,j} +  \pa_x^{\alpha(i,j)}  E \cdot (\na_v f )(t,x, \Phi) +  \mathcal{M}_{i,j} (t,x,\Phi) \mathcal{G}= \mathcal{S}_{i,j},
   \end{equation}
   with $\mathcal{G} = (g_{i,j})$ and $ \mathcal{S}_{i,j} (t,x,v)= F_{i,j}(t,x, \Phi(t,x,v))$. Note that the equations of this system are coupled only through the zero order terms $ \mathcal{M}_{i,j} (t,x,\Phi) \mathcal{G}$.

    Let us introduce the characteristic flow $X(t,s,x,v)$, $ 0 \leq s, t \leq T$
    \begin{equation}
    \label{characteristic}
     \partial_{t} X(t,s,x,v)=  \Phi(t,X(t,s,x,v), v), \quad X(s,s,x,v)= x.
    \end{equation}
    Note that the velocity variable is only a parameter in this ODE.
    
    We start with estimating the deviation from free transport (that corresponds to the case $\Phi=v$).
       \begin{lemma}
   \label{redressement2}
   For every $t, \,s$, $0 \leq s \leq t \leq T$ and $T, \, m, \, r$ as in Lemma \ref{lemburgers}, we can write
    \begin{equation}
    \label{estimXtilde}
     X(t,s,x,v)=  x  + (t-s) \left( v +   \tilde X(t,s,x,v)  \right)
     \end{equation}
     with $\tilde X$ that satisfies the estimate
      \begin{equation}
      \label{LinftyXtilde}
 \begin{aligned}
   &  \sup_{t,s \in [0,T]} \Big( \|\pa_{x,v}^\alpha  \tilde{X}(t,s,x,v) \|_ {L^\infty_{x, v}} + \left\|{ 1 \over (1+ |v|^2)^{1 \over 2}} \pa_{x,v}^\beta \partial_{t}  \tilde{X}(t,s,x,v)
   \right\|_ {L^\infty_{x, v}} \Big) \leq T^{\frac{1}{2}}  \Lambda(T,R).
   \end{aligned}
   \end{equation}
    for every  $ |\alpha|  <2 m - d/2 - 1, \, |\beta| < 2 m - d/2 - 2$.
    
   Moreover, there exists $\hat T_{0}(R)>0$ sufficiently small such that for every  $T \leq  \min (T_{0},  \tilde T_{0}, \hat T_{0},  T^\eps)$, 
    we have that $x\mapsto x+ (t-s) \tilde X(t,s,x,v)$ is a diffeomorphism and that 
    \begin{equation}
    \label{L2Xtilde}
    \begin{aligned}
  &\sup_{t,s \in [0,T]} \sup_{v} \Big(  \| \partial_{x, v}^\alpha  \tilde{X}(t,s,x,v) \|_{ L^2_{x}}
   +\left\| { 1 \over (1+ |v|^2)^{1 \over 2}} \pa_{x,v}^\beta \partial_{t}  \tilde{X}(t,s,x,v) \right\|_{ L^2_{x}}
  \Big) \leq T^{1 \over 2} \Lambda(T, R)
  \end{aligned}
  \end{equation}
  for every $|\alpha| \leq 2m-1$, $|\beta| \leq 2m-2.$
In addition, there exists  $\Psi(t,s, x,  v)$ such that for $t, \, s\in [0, T] $ and $T\leq  \min (T_{0},  \tilde T_{0}, \hat T_{0},  T^\eps)$, we have
   $$ X(t,s, x , \Psi(t,s,x, v)) = x +  (t-s) v$$
    and the following estimates 
          \begin{equation}
 \begin{aligned}
     \label{estimPsi}
     &\sup_{t,s \in [0,T]}\Big( \| \pa^\alpha_{x,v} ( \Psi(t,s,x,v)  - v) \|_ {L^\infty_{x, v}}
      +\left\|{ 1 \over (1+ |v|^2)^{1 \over 2}} \pa_{x,v}^\beta \partial_{t} \Psi (t,s,x,v)  \right\|_ {L^\infty_{x, v}} \Big)  \leq T^{\frac{1}{2}} \Lambda(T,R),\\
      & \mbox{\hspace{8cm}} \quad   |\alpha|  <2 m - d/2 - 1, \, |\beta|  < 2m- {d\over2} - 2\\
  &\sup_{t,s \in [0,T]} \sup_{v} \Big(  \| \partial_{x, v}^\alpha ( \Psi(t,s,x,v)  - v)\|_{ L^2_{x}}
   +\left\| { 1 \over (1+ |v|^2)^{1 \over 2}} \pa_{x,v}^\beta \partial_{t}  \Psi(t,s,x,v) \right\|_{ L^2_{x}}
  \Big)  \leq T^{1 \over 2} \Lambda(T, R),\\
  & \mbox{\hspace{9cm}} |\alpha|\leq 2m-1, \, | \beta| \leq 2m-2.
  \end{aligned}
  \end{equation}

   \end{lemma}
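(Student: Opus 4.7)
Writing~\eqref{characteristic} in integral form and subtracting the free transport displacement gives
$$X(t,s,x,v)-x-(t-s)v=\int_s^t \phi(\tau,X(\tau,s,x,v),v)\,d\tau,\qquad \phi:=\Phi-v,$$
so the natural definition is
$$\tilde X(t,s,x,v):=\frac{1}{t-s}\int_s^t \phi(\tau,X(\tau,s,x,v),v)\,d\tau,$$
with the obvious limit when $t=s$. Since Lemma~\ref{lemburgers} gives $\phi=O(T^{1/2})$ in the relevant norms, $\tilde X$ is small and all the bounds should come from Gronwall arguments on differentiated versions of the ODE~\eqref{characteristic}, bootstrapped from the lowest order upward.

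\textbf{Step 1: the $L^\infty$ estimates \eqref{LinftyXtilde}.} I would apply $\partial^\alpha_{x,v}$ for $|\alpha|<2m-d/2-1$ to~\eqref{characteristic}, expand using the Faà di Bruno formula, and close a Gronwall estimate in $L^\infty_{x,v}$ with $v$ treated as a parameter. The top-order term is $(\partial^\alpha \phi)(\tau,X,v)$, which is controlled by~\eqref{estim-Burgers-infini}; the lower-order terms are products of previously-estimated derivatives of $X$ with $\partial^j \phi$ for $j<|\alpha|$, handled by induction. For the time derivative, use $\partial_t X=\Phi(t,X,v)=v+\phi(t,X,v)$, differentiate once more in $t$ through~\eqref{eq-Burgers} to extract the weighted bound on $\partial_t \phi$, and divide by $(1+|v|^2)^{1/2}$ to absorb the leading $v$ arising from $\Phi$.

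\textbf{Step 2: the $L^2_x$ estimates \eqref{L2Xtilde}.} Here I would run the same Gronwall scheme, but in $L^2_x$ with $v$ as a parameter, and then take the supremum in $v$. Differentiating the ODE yields
$$\partial_t(\partial^\alpha_{x,v}X)=(\nabla_x\Phi)(t,X,v)\cdot\partial^\alpha_{x,v}X+(\partial^\alpha \Phi)(t,X,v)+[\text{commutator}],$$
and the top-order contribution $(\partial^\alpha \Phi)(t,X,v)$ is bounded in $L^\infty_v L^2_x$ uniformly in $(t,s)$ by the second half of~\eqref{estim-Burgers} combined with the fact (from Step~1) that $x\mapsto X(\tau,s,x,v)$ has Jacobian bounded above and below. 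Lower-order commutators are controlled by the $L^\infty$ bounds already obtained. A Gronwall argument closes the estimate provided $T$ is small enough depending only on $R$; the weighted time-derivative bound follows from the equation together with the analogous weighted bound on $\partial_t\Phi$ in~\eqref{estim-Burgers}.

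\textbf{Step 3: the change of variables and construction of $\Psi$.} With $|\alpha|=1$ in~\eqref{LinftyXtilde}, $\|\partial_{x}\tilde X\|_{L^\infty}=O(T^{1/2})\Lambda(T,R)$, so for $T\le \hat T_0(R)$ small enough $x\mapsto x+(t-s)\tilde X(t,s,x,v)$ is a near-identity $C^1$-diffeomorphism of $\mathbb T^d$. For $\Psi$, observe that the identity to be solved is equivalent to the fixed-point equation $w=v-\tilde X(t,s,x,w)$ in the variable $w$; since $\|\nabla_v\tilde X\|_{L^\infty}=O(T^{1/2})\Lambda(T,R)$, this map is a strict contraction uniformly in $(t,s,x,v)$ and defines $\Psi(t,s,x,v)$. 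The identity $X(t,s,x,\Psi)=x+(t-s)(\Psi+\tilde X(t,s,x,\Psi))=x+(t-s)v$ follows. The estimates~\eqref{estimPsi} are then derived by implicitly differentiating $\Psi+\tilde X(t,s,x,\Psi)=v$, inverting the near-identity Jacobian in $v$, and propagating the bounds on $\tilde X$ from~\eqref{LinftyXtilde}--\eqref{L2Xtilde}; the weighted time-derivative bound uses $\partial_t\Psi=-(I+\nabla_v\tilde X)^{-1}\partial_t\tilde X|_{w=\Psi}$ and the fact that composition with $\Psi$ preserves the weight $(1+|v|^2)^{-1/2}$ up to an $O(1)$ factor since $\Psi-v$ is bounded.

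\textbf{Main obstacle.} The delicate point is the weighted bounds on $\partial_t$: $\partial_tX=\Phi$ grows linearly in $|v|$, so without the weight $(1+|v|^2)^{-1/2}$ every estimate would lose a factor of $|v|$. The fact that Lemma~\ref{lemburgers} is already phrased with exactly this weight on $\partial_t\Phi$ is what makes the bootstrap close; tracking this weight correctly through the chain rule (in particular, ensuring that composition with $X$ or $\Psi$ does not destroy it) is the main bookkeeping difficulty.
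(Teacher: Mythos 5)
Your overall route matches the paper: set $\phi=\Phi-v$, write $Y:=X-x-(t-s)v$ in integral form, identify $\tilde X=Y/(t-s)$, propagate the $L^\infty$ estimates of Lemma~\ref{lemburgers} through the ODE by Gronwall in $L^\infty$ and then in $L^2_x$, and build $\Psi$ as the inverse of the near-identity map $w\mapsto w+\tilde X$. That is exactly the strategy of the paper, and your description of Steps~2 and~3 is essentially correct (though in Step~2 you should be explicit about the two cases in the Fa\`a di Bruno sum: when the number $k$ of composition factors exceeds $2m-d/2-1$, the term $(D^k\phi)\circ g$ is no longer bounded in $L^\infty$ and must be put in $L^2_x$ via the change of variable $y=x+Y$, while all the $\partial^{\beta_j}g$ are then automatically low-order; when $k$ is small, one checks $|\beta_{k-1}|<2m-d/2-1$ so only the top $\partial^{\beta_k}g$ goes in $L^2_x$).

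The genuine gap is in your treatment of the weighted $\partial_t$ bound. What must be estimated is $\partial_t\tilde X$, not $\partial_t X$, and your Step~1 discussion ("use $\partial_t X=\Phi$, differentiate once more in $t$ through~\eqref{eq-Burgers}") does not address the real difficulty: from $\tilde X=\frac{1}{t-s}\int_s^t\phi(\tau,X(\tau,s,x,v),v)\,d\tau$ one gets
$$\partial_t\tilde X=\frac{\phi(t,X(t,s,x,v),v)-\tilde X}{t-s},$$
which has an apparent $1/(t-s)$ singularity. Resolving it requires recognizing that the numerator vanishes to first order as $t\to s$; the paper does this by Taylor-expanding $\phi\bigl(\tau,x+(\tau-s)(v+\tilde X),v\bigr)$ to write $\tilde X=\phi(s,x,v)+\frac{1}{t-s}\int_s^t(\tau-s)Y_1\,d\tau$ (with $Y_1=\phi_1+\phi_2\cdot(v+\tilde X)$, $\phi_1$ involving $\partial_t\phi$ and $\phi_2$ involving $D_x\phi$), giving the regular formula~\eqref{eqXtildebis} for $\partial_t\tilde X$. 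This is where the linear $v$-growth actually enters — through the factor $\phi_2\cdot v$ from the chain rule, not directly from $\partial_t X=\Phi$ as you state in your "Main obstacle" paragraph (the subtraction of $(t-s)v$ in $Y$ already cancels the free-streaming contribution). Without this device, or the equivalent observation that $\partial_t\tilde X$ is bounded by the time-Lipschitz constant of $\tau\mapsto\phi(\tau,X(\tau,s,x,v),v)$, your Step~1 does not close.
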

   
   \begin{proof}[Proof of Lemma~\ref{redressement2}]
   Let us set $\phi = \Phi -v$ as in the proof of Lemma \ref{lemburgers} and 
    $Y(t,s,x,v)= X(t,s,x,v)  - x- (t-s)v$. We shall first estimate $Y$. Since we have 
        \begin{equation}
    \label{eqintY}
    Y(t,s,x,v)= \int_{s}^t \phi\left(\tau, x+ (\tau -s ) v + Y(\tau, s, x, v), v \right) \, d \tau,
    \end{equation}
    we deduce from the estimates of  Lemma \ref{lemburgers} that for $|\alpha | < 2m- {d \over 2} -1$, we have for $0 \leq s, \, t \leq T$,
    $$ \sup_{|\alpha|  <2m- {d \over 2} -1 } \| \partial^\alpha_{x,v} Y(t,s) \|_{L^\infty_{x,v}} \leq \left| \int_{s}^t
      T^{1 \over 2} \Lambda(T, R) \big( 1 +  \sup_{|\alpha|  <2m- {d \over 2} -1 } \| \partial^\alpha_{x,v} Y(\tau,s) \|_{L^\infty_{x,v}}\big)\, d\tau\right|.$$
       From the Gronwall inequality, this yields
      \begin{equation}
      \label{estY1}  \sup_{|\alpha|  <2m- {d \over 2} -1 } \| \partial^\alpha_{x,v}Y(t,s) \|_{L^\infty_{x,v}} \leq |t-s| T^{1 \over 2} \Lambda(T,R).
      \end{equation}
       Consequently, we can set $\tilde{X}(t,s,x,v)=Y(t,s,x,v)/(t-s)$  and deduce from the above estimate
        that $\tilde X$ verifies the first part of  \eqref{LinftyXtilde}.  
         To estimate the time derivative, we  go back to  \eqref{eqintY}. We use a Taylor expansion to write
        \begin{multline*}
          \phi\left(\tau, x+ (\tau -s )( v + \tilde X(\tau, s, x, v)), v) \right)  \\=  \phi(s,x,v) + (\tau-s) \left( \phi_{1}(\tau, s, x,v)+ \phi_{2}(\tau, s, x, v) \cdot (v+\tilde X(\tau, s, x, v))\right)
          \end{multline*}
          where
         \begin{equation}
          \label{defphi1phi2}
         \begin{aligned}
          \phi_{1}(\tau, s, x,v) & =  \int_{0}^1 \partial_t \phi((1- \sigma) s + \sigma \tau, x, v)) d \sigma, \\
          \,\phi_{2}(\tau, s, x, v) & = \int_{0}^1 D_{x} \phi(\tau, x + \sigma (\tau-s)(v+  \tilde X(\tau, s,x, v)), v) \, d\sigma.
          \end{aligned}
          \end{equation}
           By using \eqref{eqintY}, we thus obtain that
        $$
          \tilde X(t,s,x,v)= \phi(s,x,v) + { 1 \over  t-s}  \int_{s}^t (\tau-s)Y_{1}(\tau,s,x,v)\, d\tau
       $$
         with
         \begin{equation}
         \label{Y1def} Y_{1}(\tau,s,x,v)= \left( \phi_{1}(\tau, s, x,v)+ \phi_{2}(\tau, s, x, v) \cdot (v+\tilde X(\tau, s, x, v))\right)\, d \tau
         \end{equation}
         and thus that
       \begin{equation}
        \label{eqXtildebis}
       \partial_{t} \tilde X(t,s,x,v)  = - { 1 \over (t-s)^2}   \int_{s}^t  (\tau-s)Y_{1}(\tau,s,x,v)\, d\tau+
       Y_{1}(t,s,x,v).
       \end{equation}
         By using  \eqref{estY1}, 
         \eqref{estim-Burgers-infini} with the same  arguments as above, we  get that for $| \beta | < 2m- {d \over 2} -2$, the following estimate holds:
         $$  \left\|{ 1 \over (1+ |v|^2)^{1 \over 2}} \pa_{x,v}^\beta   Y_{1}(\tau,s,x,v)
   \right\|_ {L^\infty_{x, v}}  \leq  T^{1 \over 2} \Lambda(T, R).$$
   This yields  \eqref{LinftyXtilde}.

         We now turn to the proof of the estimate \eqref{L2Xtilde}. Note that  from the  estimate \eqref{estY1} on $Y$, we can also ensure
         that  for $\hat T_{0}$ (that depends only on $R$) sufficiently small, the map $x \mapsto  y= x+ Y(t,s,x,v)$ is a diffeomorphism with  Jacobian
          $|  \det \, D_{x}y|$ such that  ${1 \over 2} \leq  |  \det \, D_{x}y| \leq 2$. 
          
          We shall next  prove the estimate \eqref{L2Xtilde}. Let us set
           $M_{2m- 1}(t,s)= \sup_{v} \sup_{|\alpha | \leq 2m-1}  \| \partial^\alpha_{x,v} Y(t,s) \|_{L_{x}^2}.$
             It will be also convenient to introduce the function $g(t,s, x,v)= ( x+ (t-s)v + Y(t,s,x,v), v)$ so that
              $\phi(t, x+ (t-s)v +  Y(t,s,x,v), v)= \phi(t) \circ g(t,s)$.
           From \eqref{eqintY}, we thus obtain that
           $$\| \partial_{x,v}^\alpha Y(t,s) \|_{L^\infty_{v} L^2_{x}}
            \lesssim \int_{s}^t \sum_{k, \alpha_{1}, \cdots, \alpha_{k}} \left\| (D^k_{x,v} \phi(\tau)) \circ g(\tau,s)\cdot \left( \partial^{\beta_{1}}_{x,v} g(\tau,s), \cdots, \partial^{\beta_{k}}_{x,v} g(\tau,s) \right) \right\|_{L^\infty_{v} L^2_{x}}
            \, d \tau$$
            where the sum is taken on indices such that $  k \leq | \alpha | \leq 2m-1$, $ \beta_{1}+ \cdots+ \beta_{k}= | \alpha |$
             with for every $j$,  $|\beta_{j}| \geq 1$ and $|\beta_{1}| \leq | \beta_{2}| \leq \cdots\leq  | \beta_{k}|.$
             
             To estimate the right hand side, we first observe that  in the sum, if $k \geq 2$, we necessarily have $|\beta_{k-1}| < 2m-{d \over 2} -1$.
              Indeed, otherwise, there holds $ |\beta_{1}| + \cdots + | \beta_{k} | \geq  4m - d - 2$ and thus $2m-1 \geq 4m-d- 2 $ which yields
                $ 2m \leq d+ 1$ and thus is impossible by assumption on $m$.  Next, 
             \begin{itemize}
             \item if  $k< 2m-{d \over 2}- 1$ and $k \geq 2$ we can write  thanks to the above observation, Lemma \ref{lemburgers} and \eqref{estY1} that
           \begin{align*}
          &  \left\| (D^k_{x,v} \phi(\tau)) \circ g(\tau,s)\cdot \left( \partial^{\beta_{1}}_{x,v} g(\tau,s), \cdots, \partial^{\beta_{k}}_{x,v} g(\tau,s) \right) \right\|_{L^\infty_{v} L^2_{x}}  \\&  \leq  \|D^k \phi\|_{L^\infty_{x,v}}\, \|  \partial^{\beta_{1}}_{x,v} g(\tau,s)\|_{L^\infty_{x,v}} \cdots \| \partial^{\beta_{k-1}}_{x,v} g(\tau,s)\|_{L^\infty_{x,v}} \| \partial^{\beta_{k}} g(\tau, s)\|_{L^\infty_{v} L^2_{x}}\\
           &  \leq  T^{1 \over 2}\Lambda(T, R)(1 +  M_{2m-1}(\tau, s)).
           \end{align*}
           If $k=1$, the above estimate is obviously still valid.
             \item  if $k>2m-{d \over 2}- 1$, we observe that  for every $j$,   $| \beta_{j}| \leq  | \beta_{k}| \leq 2m-1 - (k-1) < 1 + {d \over 2}$. Thus
              $| \beta_{j}| <   2m - {d \over 2 }-1$  by the assumption on $m$ and we get by using \eqref{estY1} that 
            $$   \| \partial^{\beta_{j}}_{x,v} g(\tau,s)\|_{L^\infty_{x,v}} \lesssim  1 + T + \|\partial^{\beta_{j}}_{x,v} Y(\tau,s)\|_{L^\infty_{x,v}}
              \lesssim   \Lambda(T, R).
            $$
        This yields
       \begin{align*}  \left\| (D^k_{x,v} \phi(\tau)) \circ g(\tau,s)\cdot \left( \partial^{\beta_{1}}_{x,v} g(\tau,s), \cdots, \partial^{\beta_{k}}_{x,v} g(\tau,s) \right) \right\|_{L^\infty_{v} L^2_{x}} &  \lesssim \left\| (D^k_{x,v} \phi(\tau)) \circ g(\tau,s) \right\|_{L^\infty_{v} L^2_{x}} \Lambda(T, R)
        \\ &  \lesssim T^{1 \over 2 }\Lambda(T, R).
       \end{align*}
       To get the last estimate, we have used that  thanks to the choice of $\hat T_{0}$, we can use the change of variable
      $ y= x+  Y(t,s,x,v)$ when computing the $L^2_{x}$ norm of $(D^k_{x,v} \phi(\tau)) \circ g(\tau,s)$
       and the estimates of Lemma \ref{lemburgers}.
             \end{itemize}
             By combining the above estimates, we obtain that
           $$ M_{2m-1}(t,s) \leq  (t-s) T^{1 \over 2} \Lambda(T, R) + \int_{s}^t T^{1 \over 2} \Lambda(T, R) M_{2m-1}(\tau,s)\, d\tau.$$
            By using again the Gronwall inequality, we thus obtain that
            $$   M_{2m-1}(t,s) \leq (t-s) T^{1 \over 2} \Lambda(T,R)$$
             and thus by using that $\tilde X(t,s,x,v)=  Y(t,s,x,v)/(t-s)$, we finally obtain \eqref{L2Xtilde}.
              To estimate the time derivative, it suffices to combine the above arguments with the expression
              \eqref{eqXtildebis} for $\partial_{t} \tilde X$.

  To construct $\Psi$, 
    it suffices to notice that the map
    $ v \mapsto v +  \tilde{X}(t,s,x,v)$ is for $T$ sufficiently small a  Lipschitz small perturbation of the identity and
      hence an homeomorphism on $\mathbb{R}^d$.  We can define $\Psi$ as its inverse.
       The claimed regularity follows easily by using the same composition estimates as above and the regularity of $\tilde X$.
   \end{proof}
   
  Define now the tensor  $\mathcal{M}$ by the formula $(\mathcal{M} H)_{i,j}=  \mathcal{M}_{i,j} H$ for all $i,j \in \{1,\cdots, d\}^m$ (with $\mathcal{M}_{i,j}$ defined in \eqref{eq-Mij}) and for $0 \leq s, t \leq T$, $x \in \mathbb{T}^d$, $v \in \R^d$, introduce  the matrix $ \mathfrak{M}(t,s,x,v)$ as the solution of
   \begin{equation}
   \label{defexp}
 \partial_t    \mathfrak{M}(t,s,x,v) = -  \mathcal{M}(t,x,\Phi(t,x,v))  \mathfrak{M}(t,s,x,v), \quad    \mathfrak{M}(s,s,x,v) = I.
   \end{equation}
   Note that by a straightforward Gronwall type argument and \eqref{energy-constraintbis}-\eqref{estim-Burgers-infini}, we have
   \begin{equation}
   \label{frakM}
 \sup_{ 0 \leq s, t \leq T} \left(    \|\mathfrak{M}\|_{W^{k,\infty}_{x,v}} +  \| \pa_t\mathfrak{M}\|_{W^{k,\infty}_{x,v}} + \| \pa_s\mathfrak{M}\|_{W^{k,\infty}_{x,v}}\right)  \leq \Lambda(T,R), \qquad k < 2m -d/2 -2.
   \end{equation}
   
   We shall now  show that the study of \eqref{galphabeta} can be reduced to that  of a system of  integral  equations with a well controlled remainder.

   \begin{lemma}
   \label{redu}
    For  a smooth vector field $G(t,s,x,v)$, define  the following integral operators $ K_{G}$ acting on functions $F(t,x)$:
 $$ K_{G}(F) (t,x) =    \int_{0}^t \int (\nabla_{x}  F) (s,  x - (t-s) v) \cdot
      G(t,s,x,v)\, dv ds.$$
    For $f$ solving \eqref{VP} and $\rho= \int f \, dv$, the function $\partial_{x}^{\alpha(i,j)} \rho$
      satisfies an equation under the form
     \begin{equation}
     \label{eqrho}\partial_{x}^{\alpha(i,j) } \rho = \sum_{k,l} K_{ H_{(k,l), \, (i,j)} }( (I - \eps^2 \Delta)^{-1} \partial_{x}^{\alpha(k,l) } \rho) + \Rc_{i,j},
     \end{equation}
          with
     \begin{multline}
       H_{(k,l), \, (i,j)} :=    \mathfrak{M}_{(k,l), \, (i,j)}(t,s,x - (t-s) v,\Psi(s,t,x,v)) ( \nabla_{v}f)(s,  x- (t-s)v,\Psi(s,t,x,v)) \\
        |\det \na_v \Phi(t,x,\Psi(s,t,x,v))|   | {\det } \nabla_{v}\Psi(s,t,x,v) |,
     \end{multline}
        and the remainder $\Rc_{i,j}$ satisfies for $T< \min (T_{0}, \tilde T_{0}, \hat T_{0}, T^\eps)$ and $2m>d+3$, $2r>d$ the estimate
      \begin{equation}
   \label{borneRlem} \|\Rc_{i,j}\|_{L^2([0, T], L^2_{x})} \lesssim      T^{1\over 2} \Lambda(T,R).
   \end{equation}

   \end{lemma}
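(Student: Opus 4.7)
The plan is to integrate the system \eqref{galphabeta} along the characteristic flow $X(\cdot,s,\cdot,v)$ defined in \eqref{characteristic}, eliminate the zero-order coupling $\mathcal{M}_{i,j}\mathcal{G}$ by means of the evolution matrix $\mathfrak{M}$ introduced in \eqref{defexp}, and then exploit the straightening provided by Lemma~\ref{redressement2} to recover the free-transport structure appearing in the definition of $K_G$. Writing the Duhamel formula along the characteristic, the unknown $g_{i,j}(t,x,v)$ splits into three contributions: the initial datum $g_{i,j}(0,X(0,t,x,v),v)$ transported by $\mathfrak{M}$, a time-integral of the source $\mathcal{S}_{i,j}$, and a time-integral of the forcing term $\partial_x^{\alpha(i,j)}E\cdot(\nabla_v f)(\cdot,\Phi)$ weighted by $\mathfrak{M}$.

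Integrating in $v$ against $J(t,x,v)=|\det\nabla_v\Phi|$ and invoking the key identity \eqref{formule-equiv} of Lemma~\ref{lemequiv}, the left-hand side becomes $\partial_x^{\alpha(i,j)}\rho$ modulo a remainder already controlled by \eqref{rest1}. In the forcing integral, I would perform the change of variables $v=\Psi(s,t,x,w)$ from Lemma~\ref{redressement2}: by construction $X(s,t,x,\Psi(s,t,x,w))=x-(t-s)w$, so the argument of $\partial_x^{\alpha(k,l)}E$ collapses to the free-transport point $x-(t-s)w$. Using the Poisson equation one writes $\partial_x^{\alpha(k,l)}E=-\nabla_x(I-\eps^2\Delta)^{-1}\partial_x^{\alpha(k,l)}\rho$, and the forcing contribution takes exactly the form $\sum_{k,l}K_{H_{(k,l),(i,j)}}((I-\eps^2\Delta)^{-1}\partial_x^{\alpha(k,l)}\rho)$, provided one replaces the true evolution matrix along the characteristic by the frozen-argument approximation $\mathfrak{M}(t,s,x-(t-s)v,\Psi(s,t,x,v))$ from \eqref{defexp} and simplifies the third argument $\Phi(s,x-(t-s)v,\Psi(s,t,x,v))$ of $\nabla_v f$ by $\Psi(s,t,x,v)$.

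The remainder $\mathcal{R}_{i,j}$ then collects: (a) the remainder from \eqref{formule-equiv}; (b) the initial-data term $\int\mathfrak{M}(t,0,\cdot)g_{i,j}(0,X(0,t,\cdot),\cdot)J\,dv$, which after a change of variables in $v$ is close to $\partial_x^{\alpha(i,j)}\rho^{0}$ and whose $L^2_t$ norm gains a factor $T^{1/2}$ from integration in time; (c) the source integral $\int_0^t\!\!\int\mathfrak{M}\,\mathcal{S}_{i,j}J\,dv\,ds$, bounded using \eqref{restF}, \eqref{frakM}, and Cauchy--Schwarz in $s$; (d) the discrepancy between the true characteristic evolution matrix (driven by $\mathcal{M}$ evaluated along $X(\tau,s,x_0,v_0)$) and $\mathfrak{M}$, controlled by \eqref{LinftyXtilde} and the ODE satisfied by the difference; (e) the discrepancy between $\Phi(s,x-(t-s)v,\Psi)$ and $\Psi$, controlled by a Taylor expansion together with \eqref{estim-Burgers-infini} and \eqref{estimPsi}. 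In each case the weight $(1+|v|^2)^{r}$ with $2r>d$ secures integrability in $v$ of derivatives of $f$, reducing the bound to the $\Hc^{2m-1}_{2r}$ control coming from \eqref{eq-energy}, and an overall factor $T^{1/2}\Lambda(T,R)$ is extracted.

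The main obstacle I anticipate is contribution (d): one must justify that replacing the true characteristic evolution matrix, which depends on $\mathcal{M}$ evaluated at the moving point $X(\tau,s,x_{0},v_{0})$, by the frozen-spatial-argument $\mathfrak{M}$ from \eqref{defexp} produces only $O(T^{1/2})$ corrections in the final $L^2([0,T],L^2_x)$ norm. This requires propagating the small-time estimates of Lemma~\ref{redressement2} through the matrix ODE via a Gronwall-type argument, then through the subsequent $v$-integration against $\nabla_v f$, and finally combining with the Poisson-regularized factor $(I-\eps^{2}\Delta)^{-1}\partial_x^{\alpha(k,l)}\rho$, whose $\eps$-dependence is harmless since one only uses $\|(I-\eps^{2}\Delta)^{-1}\partial_x^{\alpha}\rho\|_{H^{1}}\lesssim\|\rho\|_{H^{2m}}$ rather than elliptic regularity.
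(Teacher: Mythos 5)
Your proposal follows the paper's proof of Lemma \ref{redu} almost step for step: Duhamel along the characteristics \eqref{characteristic} with the evolution matrix $\mathfrak{M}$, multiplication by $J$ and integration in $v$, use of Lemma \ref{lemequiv} to recover $\partial_x^{\alpha(i,j)}\rho$, the change of variables $v=\Psi(s,t,x,w)$ to reach the free-transport form, and the separate estimates of the initial-data and source contributions via \eqref{frakM}, \eqref{restF} and Cauchy--Schwarz. The only substantive difference is that you explicitly track two discrepancies that the paper writes as exact: the gap between the as-defined $\mathfrak{M}(t,s,x,v)$ of \eqref{defexp} (a fixed-$(x,v)$ ODE) and the true Duhamel matrix transported along $X(\cdot,t,x,v)$, and the fact that the change of variables really produces $\Phi\bigl(s,x-(t-s)v,\Psi(s,t,x,v)\bigr)$ rather than $\Psi(s,t,x,v)$ in the third slot of $\nabla_v f$. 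Both of these corrections vanish at $s=t$ and are $O(T)$ thanks to \eqref{frakM}, \eqref{estim-Burgers-infini}, \eqref{estimPsi}; your plan to absorb them into $\Rc_{i,j}$ by a Gronwall/Taylor argument is sound, and is in fact what the paper implicitly relies on in the very next step (Lemma \ref{lem-simplification}), where $H$ is Taylor-expanded around $s=t$ and only $H(t,t,x,v)$ survives at leading order. So the extra care in your items (d) and (e) is not a different route but a more explicit bookkeeping of the same estimate.
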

   
   \begin{proof}[Proof of Lemma~\ref{redu}]
   Let us introduce for notational brevity
   $$
\eta (t,x,v) := \left(\partial_{x}^{\alpha(i,j)} E(t,x) \cdot \na_v f(s,x,\Phi(t,x,v))\right)_{i,j}.
   $$
   Using the classical characteristics method, we get that $\mathcal{G}$ satisfying \eqref{galphabeta} solves the integral equation
     \begin{multline*} 
     \mathcal{G}(t,x,v) =   \mathfrak{M}(t,0,x,v)\mathcal{G}^0( X(0,t,x,v),v)-\int_{0}^t    \mathfrak{M}(t,s,x,v)\mathcal{S}(s, X(s,t,x,v), v) \, ds  \\
      -   \int_{0}^t \mathfrak{M}(t,s,x,v) \eta(s,X(s,t,x,v),v) ds
      \end{multline*}
      with $\mathcal{G}^0= (g_{i,j}^0)$.
      Hence, after  multiplying by $  J(t,x,v)=|\det \na_v \Phi(t,x,v)|$ and integrating in $v$, we get that
  \begin{equation}
    \label{eqintegrale1}
   \int_{\mathbb{R}^d}   \mathcal{G}(t,x,v) J(t,x,v) dv = \mathcal{I}_{0} + \mathcal{I}_{F} -   \int_{0}^t \mathfrak{M}(t,s,x,v) \eta(s,X(s,t,x,v),v) J(t,x,v)\, dv ds
\end{equation}
with
\begin{align*}
 \mathcal{I}_{0} &:= \int_{\mathbb{R}^d} \mathfrak{M}(t,0,x,v)\mathcal{G}^0( X(0,t,x,v),v) J(t,x,v)\, dv,  \\
 \mathcal{I}_{F} &:= -  \int_{0}^t \int_{\mathbb{R}^d}    \mathfrak{M}(t,s,x,v)\mathcal{S}(s, X(s,t,x,v), v)J(t,x,v) \,dv ds.
 \end{align*} 
We shall estimate $\mathcal{I}_{0}$ and $\mathcal{I}_{F}$.
First by using the estimates~\eqref{frakM} and \eqref{estim-Burgers-infini}, it follows that for all $x \in \mathbb{T}^d$,
    $$  \left|\int \mathfrak{M}(t,0,x,v)\mathcal{G}^0( X(0,t,x,v),v)  J(t,x,v)\, dv \right|  \leq \Lambda(T, R) \sum_{i,j} \int |g_{i,j}^0 (X(0,t,x,v),v) |\, dv.$$
     Therefore, we obtain that
     $$ \| \mathcal{I}_{0} \|_{L^2([0, T], L^2_{x})} \leq \Lambda(T, R)  \sum_{i,j}  \left\|\int_{v} \|g_{i,j}^0 (X(0,t,\cdot,v),v) \|_{L^2_{x}}\, dv \right\|_{L^2(0, T)}.$$
      By using the change of variable in $x $,  $y= X(0, t, x, v) + tv=  x - t \tilde X(0, t, x, v)$ and  Lemma \ref{redressement2}, we obtain that
      $$  \|g_{i,j}^0 (X(0,t,\cdot,v),v) \|_{L^2_{x}} \leq \Lambda(T, R) \|g_{i,j}^0(\cdot - tv, v)\|_{L^2} \leq \Lambda(T, R) \| g_{i,j}^0(\cdot, v) \|_{L^2_{x}}$$
       and hence, we get from Cauchy-Schwarz that
       $$  \| \mathcal{I}_{0} \|_{L^2([0, T], L^2_{x})} \leq T^{1 \over 2} \Lambda(T, R) \left(\int_{\mathbb{R}^d}    \frac{dv}{(1+|v|^2)^{r}}\right)^{\frac{1}{2}} \sum_{i,j} \| g_{i,j}^0 \|_{\Hc^0_{r}}.$$
        By using again Lemma \ref{lemburgers} and the fact that at $t=0$ we have  that  $L^{(i,j)}= \partial_{x}^{\alpha (i,j)}$ we get   that
         $ \| g_{i,j}^0 \|_{\Hc^0_{r}} \leq \Lambda(T, R) \| f_{i,j}^0 \|_{\Hc^0_{r}} \leq \Lambda(T, R) \| f^0\|_{\Hc^{2m}_{r}}$ and
          hence we finally obtain that
      $$  \| \mathcal{I}_{0} \|_{L^2([0, T], L^2_{x})} \leq  T^{1 \over 2}\Lambda(T, R).$$
       
By using similar arguments, we can estimate $\mathcal{I}_{F}$. Indeed,  we can use successively \eqref{frakM}, the change of variable $x\mapsto  X(s,t,x,v)$ with Lemma \ref{redressement2}
 and Cauchy-Schwarz to obtain   that 
\begin{align*}
\| \mathcal{I}_{F}\|_{L^2([0, T], L^2_{x})}&  \leq  \Lambda(T, R) \sum_{i,j} \left \| \int_{0}^t \int_{\mathbb{R}^d}  \|\mathcal{S}_{i,j} (s, X(s,t,\cdot,v), v) \|_{L^2_{x}} \,dv ds\, \right\|_{L^2(0, T)} \\
 & \leq  \Lambda(T, R) \sum_{i,j}\left\|  \int_{0}^t \int_{\mathbb{R}^d} \|\mathcal{S}_{i,j} (s, \cdot, v)\|_{L^2_{x}} \,dv ds \right\|_{L^2(0, T)} \\
 &  \leq  \Lambda(T, R) \left\| \int_{0}^t \| \mathcal{S}(s)\|_{\Hc^0_{r}}\, ds  \right\|_{L^2(0, T)} \\
 & \leq  \Lambda(T, R) \,T\,  \| \mathcal{S}\|_{L^2([0, T], \Hc^0_{r})}.
  \end{align*}
  Finally, since $\mathcal{S}(t,x,v)= F(t,x, \Phi(t,x,v))$, we can use Lemma \ref{lemburgers} and \eqref{restF} to obtain that
  $$  \| \mathcal{S}\|_{L^2([0, T], \Hc^0_{r})} \leq \Lambda(T, R) \| F\|_{L^2([0, T], \Hc^0_{r})} \leq \Lambda(T, R).$$
   We have thus proven that
  $$ \| \mathcal{I}_{F} \|_{L^2([0, T], L^2_{x})} \leq  T\Lambda(T, R).$$

 \bigskip
 
By using Lemma \ref{lemequiv}, we eventually obtain from \eqref{eqintegrale1} and the above estimates that
 \begin{multline*}
  \partial_{x}^{\alpha(i,j) } \rho = \Rc_{i,j} 
   -   \int_{0}^t \int  \sum_{k,l} \mathfrak{M}_{(k,l), \, (i,j)}(t,s,x,v)(\partial_{x}^{\alpha(k,l)} E) (s, X(s,t,x,v))  \\ \cdot( \nabla_{v}f)\left((s,  X(s,t,x,v),\Phi(s, X(s,t,x,v),v)\right) J(t,x,v)\, dv ds
  \end{multline*}
   with
   \begin{equation}
   \label{borneR} \|\Rc_{i,j}\|_{L^2([0, T], L^2_{x})} \lesssim      T^{1\over 2} \Lambda(T,R).
   \end{equation}
   Thanks to Lemma \ref{redressement2}, we can use   the change of variable $ v= \Psi(s,t,x,w)$ (and relabel $w$ by $v$) to  end up with the integral equation
   \begin{equation}
   \label{equationintegrale1}
    \partial_{x}^{\alpha (i,j)  } \rho =  -   \int_{0}^t \int  \sum_{k,l} ( \partial_{x}^{\alpha(k,l)} E) (s,  x - (t-s) v) \cdot
       H_{(k,l), \, (i,j)}(t,s,x,v)\, dv ds + \Rc_0
      \end{equation}
     with
     \begin{multline}
     \label{Hdef}
       H_{(k,l), \, (i,j)}=   \mathfrak{M}_{(k,l), \, (i,j)}(t,s,x,\Psi(s,t,x,v))  ( \nabla_{v}f)(s,  x- (t-s)v,\Psi(s,t,x,v)) \\
      J(t,x,\Psi(s,t,x,v)) \tilde J(s,t,x,v),
     \end{multline}
     and $\tilde{J}(s,t,x,v)= | \det \nabla_{v}\Psi(s,t,x,v) |$, which, recalling the definition of the electric field $E= -\nabla (I -\eps^2 \Delta)^{-1} \rho$, corresponds to the claimed formula \eqref{eqrho}.

     \end{proof}

       We shall now study the boundedness of the operators $K_G$ for functions in $L^2([0, T],  L^2_{x})$.
       Although $K_G$ seems to feature a loss of one derivative in $x$, we shall see that if the function $G$ is sufficiently smooth, then $K_{G}$ is actually a bounded operator on 
         $L^2([0, T],  L^2_{x})$. This means that we can recover the apparent loss of derivative by using the averaging in $v$, which is reminiscent of averaging lemmas
        (note that we nevertheless require regularity on $G$).
     \begin{proposition}
     \label{propint}
      There exists $C>0$ such that for every $T>0$ and every  $G$ with
     \begin{equation}
     \label{Hhyp} 
      \|G \|_{T, s_{1}, s_{2} } := \sup_{0 \leq t \leq T} 
      \left(\sum_{k} \sup_{0 \leq s \leq T} \sup_{\xi}  \Big( ( 1+|k|)^{s_{2}} (1 + |\xi|)^{s_{1}} |  (\mathcal{F}_{x,v} G)(t,s,k, \xi) |\Big)^2 \right)^{1 \over 2} 
     \end{equation}
     and $s_{1}>1, $ $s_{2}>d/2$, then 
     we have the estimate
     $$ \| K_{G}(F)\|_{L^2([0, T], L^2_{x})} \leq C \|G \|_{T, s_{1},s_{2}} \|F\|_{L^2([0, T], L^2_{x})}, \quad \forall F \in L^2([0, T], L^2_{x}).$$
     \end{proposition}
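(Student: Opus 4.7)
The plan is to transform everything to Fourier space in $x$ and to exploit the fact that the apparent derivative loss in $\nabla_x F$ is compensated by the integration against $G$ in $v$. Writing $F(s,x) = \sum_{k\in\Z^d}\mathcal{F}(F)(s,k)\,e^{ik\cdot x}$ and expanding $G$ similarly, a direct calculation with the Fourier convention of the paper gives
\begin{equation*}
\mathcal{F}_x(K_GF)(t,n) \;=\; (2\pi)^d\sum_{k\in\Z^d}\int_0^t ik\cdot \mathcal{F}(F)(s,k)\,(\mathcal{F}_{x,v}G)(t,s,n-k,(t-s)k)\,ds.
\end{equation*}
The apparent loss $|k|$ is multiplied by $(\mathcal{F}_{x,v}G)(\cdot,(t-s)k)$, which by assumption~\eqref{Hhyp} decays like $\langle (t-s)k\rangle^{-s_1}$; since $s_1>1$, the product $|k|\langle (t-s)k\rangle^{-s_1}$ is uniformly $L^1$ in $s$, which is the analytic source of the gain and morally a kinetic averaging effect.

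Setting $h_m(t):=\sup_{s\in[0,T],\xi\in\R^d}\langle m\rangle^{s_2}\langle\xi\rangle^{s_1}|(\mathcal{F}_{x,v}G)(t,s,m,\xi)|$, assumption~\eqref{Hhyp} reads $\|h(t)\|_{\ell^2_m}\le\|G\|_{T,s_1,s_2}$ and yields the pointwise bound
\begin{equation*}
|(\mathcal{F}_{x,v}G)(t,s,n-k,(t-s)k)| \;\le\; h_{n-k}(t)\,\langle n-k\rangle^{-s_2}\,\langle (t-s)k\rangle^{-s_1}.
\end{equation*}
I would then apply a Schur-type weighted Cauchy--Schwarz with the symmetric weight
\begin{equation*}
w(t,s,k,n) \;:=\; h_{n-k}(t)\,\langle n-k\rangle^{-s_2}\,|k|\,\langle (t-s)k\rangle^{-s_1},
\end{equation*}
obtaining $|\mathcal{F}_x(K_GF)(t,n)|^2 \lesssim A_n(t)\,B_n(t)$ with $A_n(t)=\sum_k\int_0^t w\,ds$ and $B_n(t)=\sum_k\int_0^t w\,|\mathcal{F}(F)(s,k)|^2\, ds$. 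Two elementary bounds close the estimate. First, the substitution $u=(t-s)|k|$ gives $\int_0^t|k|\langle(t-s)k\rangle^{-s_1}\, ds\le(s_1-1)^{-1}$ uniformly in $k$, and Cauchy--Schwarz on the sum over $k$ together with $\|\langle\cdot\rangle^{-s_2}\|_{\ell^2(\Z^d)}<\infty$ (which uses $s_2>d/2$) yields $\sup_n A_n(t)\lesssim\|G\|_{T,s_1,s_2}$. Second, performing the $n$-summation first in $B_n$ (using the same two ingredients now in the $n$ variable) produces $\sum_n B_n(t)\lesssim\|G\|_{T,s_1,s_2}\sum_k\int_0^t|k|\langle(t-s)k\rangle^{-s_1}|\mathcal{F}(F)(s,k)|^2\, ds$. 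Integrating over $t\in[0,T]$ and applying Fubini with the twin bound $\int_s^T|k|\langle(t-s)k\rangle^{-s_1}\, dt\le(s_1-1)^{-1}$ yields the announced inequality $\|K_GF\|_{L^2([0,T],L^2_x)}\lesssim\|G\|_{T,s_1,s_2}\|F\|_{L^2([0,T],L^2_x)}$.

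The main subtlety I anticipate is the calibration of the weight $w$. The factor $|k|$ must appear explicitly in $w$ so that the identity $|K(t,s;n,k)|^2/w = w$ holds, which is what allows Schur's trick to work symmetrically; and the decay $\langle n-k\rangle^{-s_2}$ must be split evenly between the two Cauchy--Schwarz sides so that both the $k$-sum (in $A_n$) and the $n$-sum (in $\sum_n B_n$) reduce to a pairing of the $\ell^2$-sequence $h$ with the $\ell^2$-sequence $\langle\cdot\rangle^{-s_2}$, both of which need only $s_2>d/2$. Any naive attempt that tries to pull $\langle n-k\rangle^{-s_2}$ out by $\ell^1$-Young would require the strictly stronger condition $s_2>d$, so the careful balancing of the weight across the two sides of Cauchy--Schwarz is precisely what unlocks the sharp exponent.
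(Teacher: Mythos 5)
Your proof is correct and follows essentially the same approach as the paper's: Fourier expansion in $x$, a Schur-type weighted Cauchy--Schwarz (the paper applies Cauchy--Schwarz directly with the kernel $|k\cdot(\mathcal{F}_{x,v}G)(t,s,l-k,k(t-s))|$ as the weight, where you use the majorant $w$, which is an inessential variant), and then the same two ingredients to close: the uniform-in-$k$ bound $\int_0^\infty |k|\,(1+|k|u)^{-s_1}\,du<\infty$ for $s_1>1$, and Cauchy--Schwarz in the $\ell^2$ sum against $(1+|\cdot|)^{-s_2}$ for $s_2>d/2$. Your remark about the sharpness of $s_2>d/2$ versus $s_2>d$ matches the reason the paper tracks the symbol seminorms via $L^2(\mathbb{Z}^d)$ rather than $L^1$ bounds.
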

     \begin{remark}
     \label{remarknormekernel}
   For practical use, it is  convenient to relate   $\|G \|_{T, s_{1}, s_{2} }$ to a more  tractable norm.
    A first way to do it is to observe that if $p>1 + d$, $\sigma>d/2$, we can find $s_{2}>d/2$, $s_{1}>1$ such that
   $$\Big( ( 1+|k|)^{s_{2}} (1 + |\xi|)^{s_{1}} |  (\mathcal{F}_{x,v} G)(t,s,k, \xi) |\Big)^2  \leq
    { 1 \over ( 1+|k|)^{ 2s_{2}} } \| G(t,s) \|_{\Hc^p_{\sigma}}^2$$ and 
    thus we obtain that
    $$  \|G \|_{T, s_{1}, s_{2} } \lesssim \sup_{0 \leq s,\, t \leq T} \| G(t,s) \|_{\Hc^p_{\sigma}}.$$ 
     Note that this requires roughly $1+d$ derivatives of the function $G$.
      In the following,  we shall  need only  the above Proposition in the following two cases
      for which we can reduce the number of derivatives needed on the function $G$.
           \begin{itemize}
     \item  If $G(t,s,x,v)= G(t,x,v)$ is independent of $s$,  then we have thanks to  the Bessel-Parseval identity that 
     \begin{equation}
     \label{normkernel1}   \|G \|_{T, s_{1}, s_{2} }  \leq \sup_{0 \leq t \leq T}  \|G(t)\|_{\Hc^p_\sigma}
     \end{equation}
      for any integer $p$ such that  $p>1 + {d \over 2}$ and any $\sigma$, $\sigma>{d\over 2}.$
      \item If $G(t,t,x,v) = 0$, since
       \begin{align*} 
       &  \left( ( 1+|k|)^{s_{2}}) (1 + |\xi|)^{s_{1}} \,|\mathcal{F}_{x,v}(G)(t,s,k, \xi) |\right)^2 \\
       &  \leq  T  \left|\int_{s}^t    \left( ( 1+|k|)^{s_{2}} (1 + |\xi|)^{s_{1}} \,|\mathcal{F}_{x,v}(\partial_{s}G)(t,\tau,k, \xi) |\right)^2 \, d\tau \right|
        \\
        &  \leq  T  \left|\int_{0}^t    \left( ( 1+|k|)^{s_{2}} (1 + |\xi|)^{s_{1}} \,|\mathcal{F}_{x,v}(\partial_{s}G)(t,\tau,k, \xi) |\right)^2 \, d\tau \right|,
        \end{align*}
        we obtain, by using again the Bessel-Parseval identity that
      $$ \|G \|_{T, s_{1}, s_{2} }  \leq  T^{1 \over 2} \sup_{0 \leq t \leq T}    \left(\int_{0}^t \| \partial_{s} G(t, s) \|_{\Hc^p_{\sigma}}^2\, ds \right)^{1 \over 2}$$
       and hence that 
       \begin{equation}
       \label{normekernel2}   \|G \|_{T, s_{1}, s_{2} }  \leq  T \sup_{0 \leq t \leq T} \sup_{0 \leq s \leq t} \|\partial_{s} G(t, s) \|_{\Hc^p_{\sigma}}
       \end{equation}
        with  $p> 1+ {d \over 2}$ and $\sigma>{d\over 2}.$ 
     \end{itemize} 
   \end{remark}
   \begin{proof}[Proof of Proposition \ref{propint}]
    By using Fourier series in $x$, we can write
     that 
     $$ F(t,x)= \sum_{k \in \mathbb{Z^d}} \hat F_{k}(t) e^{i k \cdot x}.$$
      This yields
     \begin{align}
     \label{KGbis}
      K_{G} F (t,x) & =  \int_{0}^t  \sum_{k}   \hat F_{k}(s) e^{ik \cdot x} ik \cdot \int e^{- ik \cdot v(t-s)} G(t,s,x,v)\, dv\, ds
       \\
       & =  \int_{0}^t \sum_{k }   \hat F_{k}(s) e^{ik \cdot x} ik \cdot (\mathcal{F}_{v} G)(t,s,x, k(t-s))\, ds
       \end{align}
       where $\mathcal{F}_{v} G$ stands for the Fourier transform of $G(t,s,x,v)$ with respect to the last variable.
        Next, expanding also  $G$ in Fourier series in the $x$ variable, we get that
    $$ 
       K_{G} F (t,x)  =  \sum_{k }   e^{ik \cdot x}  \sum_{l}  e^{il \cdot x}  \int_{0}^t \hat F_{k}(s) ik \cdot (\mathcal{F}_{x,v} G)(t,s,l, k(t-s))\, ds.$$
       Changing $l$ into $l+k$ in the second sum, we can also write this expression as
       $$  K_{G} F (t,x)  =   \sum_{l}  e^{il \cdot x} \Big( \sum_{k }   \int_{0}^t \hat F_{k}(s) ik \cdot (\mathcal{F}_{x,v} G)(t,s,l - k, k(t-s))\, ds\Big).$$
        From the Bessel-Parseval identity, this yields 
     $$  \|K_{G}\|_{L^2_{x}}^2 = \sum_{l} \Big| \sum_{k} \int_{0}^t   \hat F_{k}(s) ik \cdot (\mathcal{F}_{x,v} G)(t,s,l - k, k(t-s))\, ds \Big|^2.$$
     By using Cauchy-Schwarz in time and $k$, we  next obtain that
    \begin{multline*} \|K_{G}\|_{L^2_{x}}^2  \leq \sum_{l} \Big(\sum_{k} \int_{0}^t | \hat  F_{k}(s)|^2  \, |k \cdot (\mathcal{F}_{x,v} G)(t,s,l - k, k(t-s)) | \,ds \\ \cdot
     \sum_{k} \int_{0}^t  |k \cdot (\mathcal{F}_{x,v} G)(t,s,l - k, k(t-s)) |\,ds \Big).
     \end{multline*}
      By integrating in time, this yields
     \begin{multline}
     \label{kernel1}
       \|K_{G}\|_{L^2([0, T], L^2_{x})}^2 \leq
        \sum_{l} \int_{0}^T \int_{0}^t  \sum_{k} | \hat  F_{k}(s)|^2  \, |k \cdot (\mathcal{F}_{x,v} G)(t,s,l - k, k(t-s)) | \,ds\, dt  \\ 
         \cdot  \sup_{l} \sup_{t\in[0, T]}
        \int_{0}^t \sum_{k}   |k \cdot (\mathcal{F}_{x,v} G)(t,s,l - k, k(t-s)) |\,ds. \leq I \cdot II.
        \end{multline}
      For the second term in the above product  that is $II$, we observe that
      \begin{multline*}  \sup_{l} \sup_{t\in[0, T]}
        \int_{0}^t \sum_{k}   |k \cdot (\mathcal{F}_{x,v} G)(t,s,l - k, k(t-s)) |\,ds\\ 
         \leq  \sup_{l} \sup_{t\in[0, T]}  \sum_{k} \Big(\sup_{0 \leq s \leq t} \sup_{\xi} (1 + |\xi|)^{s_{1}} | (\mathcal{F}_{x,v} G)(t,s,l - k,\xi)|  \int_{0}^t  { |k| \over  (1 + |k|(t-s))^{s_{1}} }\, ds \Big),
         \end{multline*} 
and by choosing $s_{1}>1$, since 
$$   \int_{0}^t  { |k| \over  (1 + |k|(t-s)|)^{s_{1}} }\, ds \leq \int_{0}^{+ \infty} { 1 \over (1 + \tau^{s_{1}})}\, d\tau <+\infty$$
is independent of $k$, 
 we obtain
 \begin{multline*}  \sup_{l} \sup_{t\in[0, T]}
        \int_{0}^t \sum_{k}   |k \cdot (\mathcal{F}_{x,v} G)(t,s,l - k, k(t-s)) |\,ds\\ 
         \leq  \sup_{l} \sup_{t\in[0, T]}  \sum_{k} \sup_{0 \leq s \leq t} \sup_{\xi} (1 + |\xi|)^{s_{1}} |( \mathcal{F}_{x,v} G)(t,s,l - k,\xi)|.
         \end{multline*}
       Next, by  choosing $s_{2}>d/2$ and  by using Cauchy-Schwarz, this finally yields
      \begin{equation}
      \label{kernel3}  II
         \leq    \sup_{t\in[0, T]} \Big( \sum_{k}   \sup_{0 \leq s \leq t} \sup_{\xi}\Bigl( ( 1+|k|)^{s_{2}}(1 + |\xi|)^{s_{1}} | (\mathcal{F}_{x,v} G)(t,s,k,\xi)|  \Bigr)^2
         \Big)^{1 \over 2}.
         \end{equation}
       It remains to estimate $I$ in the right-hand side of \eqref{kernel1}.  By using Fubini, we have
    \begin{align*}
     &  \sum_{l} \int_{0}^T \int_{0}^t  \sum_{k} | \hat  F_{k}(s)|^2  \, |k \cdot (\mathcal{F}_{x,v} G)(t,s,l - k, k(t-s)) | \,ds\, dt  
     \\
     & = \int_{0}^T \sum_{k} | \hat F_{k}(s)|^2  \int_{s}^T \sum_{l} |k|  |(\mathcal{F}_{x,v} G)(t,s,l - k, k(t-s)) | \, dt \, ds \\
      & \leq \| F \|_{L^2([0, T], L^2_{x})}^2 \sup_{k} \sup_{0 \leq s \leq t} \int_{s}^T \sum_{l}  |k|  |(\mathcal{F}_{x,v} G)(t,s,l - k, k(t-s)) | \, dt.
      \end{align*}
       Next, by choosing $s_{1}>1$ and $s_{2}>d/2$ as above, we observe that
    \begin{align*}
    &  \sup_{k} \sup_{0 \leq s \leq T} \int_{s}^T \sum_{l}  |k|  |(\mathcal{F}_{x,v} G)(t,s,l - k, k(t-s)) | \, dt \\
     &  \leq   \sup_{k} \sup_{0 \leq s \leq T} \int_{s}^T { |k| \over (1 + |k| (t-s))^{s_{1}}}  \sum_{l} \sup_{\xi} (1 + |\xi|)^{s_{1}} |  (\mathcal{F}_{x,v} G)(t,s,l - k, \xi) |
       \, dt  \\
   &   \leq   \sup_{k} \sup_{0 \leq s \leq T} \int_{s}^T { |k|\, dt \over (1 + |k| (t-s))^{s_{1}}} \\
   & \qquad \qquad \qquad  \times \sup_{0 \leq s \leq T} \sup_{s \leq t \leq T}
      \left(\sum_{m}\sup_{\xi}  \Big( ( 1+|m|)^{s_{2}} (1 + |\xi|)^{s_{1}} |  (\mathcal{F}_{x,v} G)(t,s,m, \xi) |\Big)^2 \right)^{1 \over 2}.
    \end{align*}
    Since we have again
    $$    \sup_{k} \sup_{0 \leq s \leq T} \int_{s}^T { |k|\, dt \over (1 + |k| (t-s))^{s_{1}}} \leq \int_{0}^{+\infty} {d\tau \over (1 + \tau)^{s_{1}}}\, d\tau <+\infty,$$ 
    we have proven that
    \begin{equation}
    \begin{aligned}
    \label{kernel2} I &\lesssim     \| F \|_{L^2([0, T], L^2_{x})}^2 \sup_{0 \leq s \leq T} \sup_{s \leq t \leq T}
      \left(\sum_{m}\sup_{\xi}  \Big( ( 1+|m|)^{s_{2}} (1 + |\xi|)^{s_{1}} |  (\mathcal{F}_{x,v} G)(t,s,m, \xi) |\Big)^2 \right)^{1 \over 2}
        \\ &\lesssim  \| F \|_{L^2([0, T], L^2_{x})}^2  \sup_{0 \leq t \leq T}  \left(\sum_{m} \sup_{0 \leq s \leq t} \sup_{\xi}  \Big( ( 1+|m|)^{s_{2}} (1 + |\xi|)^{s_{1}} |  (\mathcal{F}_{x,v} G)(t,s,m, \xi) |\Big)^2 \right)^{1 \over 2}.\end{aligned}
        \end{equation}
     We finally get the result by combining \eqref{kernel1}, \eqref{kernel2} and \eqref{kernel3}.

   \end{proof} 
 
 We can then  use Proposition~\ref{propint} to simplify the system of equations~\eqref{eqrho} for $\rho$ in Lemma~\ref{redu}.
   \begin{lemma}
   \label{lem-simplification}
     Assume $2m >m_{0}$ and $2r >r_{0}$.  For $f$ solving \eqref{VP} and $\rho= \int f \, dv$, for every  $i,j \in \{1,\cdots, d\}^m$, the function $\partial_{x}^{\alpha(i,j)} \rho$
      satisfies an equation under the form
     \begin{equation}
     \label{eqrho1}\partial_{x}^{\alpha(i,j) } \rho = K_{\nabla_v f^0}( (I - \eps^2 \Delta)^{-1} \partial_{x}^{\alpha(i,j) } \rho) + R_{i,j},
     \end{equation}
    where the remainder $R_{i,j}$ satisfies
      \begin{equation}
   \label{borneRlem1} \|R_{i,j}\|_{L^2([0, T], L^2_{x})} \lesssim  T^{1\over 2} \Lambda(T,R).
   \end{equation}
for    $T< \min (T_{0}, \tilde T_{0}, \hat T_{0}, T^\eps)$.

     \end{lemma}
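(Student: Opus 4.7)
The plan is to reduce the matrix integral equation \eqref{eqrho} produced by Lemma~\ref{redu} to the scalar integral equation \eqref{eqrho1} by recognizing that all the kernels $H_{(k,l),(i,j)}$ differ from the target kernel $\delta_{(k,l),(i,j)}\nabla_v f^0(x,v)$ by kernels that are small in the norm $\|\cdot\|_{T,s_{1},s_{2}}$. Writing
$$
H_{(k,l),(i,j)}(t,s,x,v) = \delta_{(k,l),(i,j)}\,\nabla_v f^0(x,v) + G_{(k,l),(i,j)}(t,s,x,v),
$$
it will suffice to show $\|G_{(k,l),(i,j)}\|_{T,s_{1},s_{2}} \leq T^{1/2}\Lambda(T,R)$ for some admissible $s_{1}>1$, $s_{2}>d/2$. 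Indeed, Proposition~\ref{propint} together with the trivial bound $\|(I-\eps^{2}\Delta)^{-1}\partial_{x}^{\alpha(k,l)}\rho\|_{L^{2}([0,T],L^{2}_{x})} \leq \|\rho\|_{L^{2}([0,T],H^{2m})} \leq R$, which is uniform in $\eps$, will let the off-diagonal and error pieces be absorbed into the remainder $\mathcal{R}_{i,j}$ already provided by \eqref{borneRlem}.

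The decomposition I will use splits $G = G_{1} + G_{2}$ with
$$
G_{1}(t,x,v) := H_{(k,l),(i,j)}(t,t,x,v) - \delta_{(k,l),(i,j)}\nabla_v f^{0}(x,v),\qquad G_{2}(t,s,x,v) := H_{(k,l),(i,j)}(t,s,x,v) - H_{(k,l),(i,j)}(t,t,x,v),
$$
so $G_{1}$ is independent of $s$ and $G_{2}$ vanishes at $s=t$. The key observation is that at $s=t$ one has $\mathfrak{M}(t,t)=I$, $\Psi(t,t,x,v)=v$, $|\det \nabla_{v}\Psi(t,t,x,v)|=1$, and the shifted argument $x-(t-s)v$ collapses to $x$; hence
$$
H_{(k,l),(i,j)}(t,t,x,v) = \delta_{(k,l),(i,j)}\,\nabla_v f(t,x,v)\,|\det\nabla_v\Phi(t,x,v)|.
$$
In particular $G_{1}\equiv 0$ for $(k,l)\neq(i,j)$, while on the diagonal $G_{1}(0,x,v)=0$ because $\Phi(0,x,v)=v$ and $f(0)=f^{0}$. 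Using the Vlasov equation (for $\partial_t f$), the Burgers equation \eqref{eq-Burgers} (for $\partial_t\Phi$), together with the energy estimate \eqref{eq-energy} and the bounds \eqref{estim-Burgers-infini}, I integrate in time from $0$ to $t$ and apply standard tame product/composition estimates as in Lemma~\ref{lemprod} to obtain $\sup_{t\in[0,T]}\|G_{1}(t)\|_{\mathcal{H}^{p}_{\sigma}} \leq T\,\Lambda(T,R)$ for some $p>1+d/2$ and $\sigma>d/2$. The first case of Remark~\ref{remarknormekernel}, namely \eqref{normkernel1}, then gives $\|G_{1}\|_{T,s_{1},s_{2}}\leq T\,\Lambda(T,R)$.

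For $G_{2}$, which is tailor-made for the second case of Remark~\ref{remarknormekernel}, the inequality \eqref{normekernel2} reduces the matter to estimating $\sup_{0\leq s\leq t\leq T}\|\partial_{s}H_{(k,l),(i,j)}(t,s,\cdot)\|_{\mathcal{H}^{p}_{\sigma}}$. Differentiating the definition of $H$ produces four types of contributions: the term $\partial_{s}\mathfrak{M}$ (controlled by \eqref{frakM}); the shift derivative $v\cdot\nabla_{x}$ falling on $f$ and on $\mathfrak{M}$ after the substitution $x\mapsto x-(t-s)v$; the time derivative $\partial_{s}f$, which is rewritten via the Vlasov equation as $-v\cdot\nabla_x f -E\cdot\nabla_v f$; and the derivatives of $\Psi$ and of the two Jacobians, all controlled by Lemmas~\ref{lemburgers} and~\ref{redressement2}. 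A weighted localization in $v$ (available because we work in $\mathcal{H}^{p}_{\sigma}$) plus the Sobolev embedding absorb the factor $v$ into $\sigma$. Altogether this yields $\|G_{2}\|_{T,s_{1},s_{2}} \leq T\,\Lambda(T,R)$, and consequently $\|G_{(k,l),(i,j)}\|_{T,s_{1},s_{2}} \leq T^{1/2}\Lambda(T,R)$. Plugging into the identity from Lemma~\ref{redu} isolates the diagonal $K_{\nabla_v f^0}$ term and leaves a remainder $R_{i,j} = \mathcal{R}_{i,j} + \sum_{k,l} K_{G_{(k,l),(i,j)}}((I-\eps^{2}\Delta)^{-1}\partial_{x}^{\alpha(k,l)}\rho)$ obeying \eqref{borneRlem1}.

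The main obstacle will be the regularity bookkeeping: verifying that with $2m>m_{0}$ and $2r>r_{0}$, the derivatives of $\mathfrak{M}$, $\Psi$, $\Phi$, and $f$ (including time derivatives traded via the Vlasov and Burgers equations) can be combined — through tame product, composition, and commutator estimates in weighted Sobolev spaces — to land in $\mathcal{H}^{p}_{\sigma}$ with $p>1+d/2$ and $\sigma>d/2$, while keeping the right-hand side bounded by $\Lambda(T,R)$ rather than by negative powers of $\eps$. The precise choice $p_{0}=\lfloor d/2\rfloor+1$ and $r_{0}=\max(d,2+d/2)$ in \eqref{seuil} is precisely what makes this bookkeeping close.
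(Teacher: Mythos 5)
Your proposal follows the same route as the paper: the decomposition $H(t,s) = \delta\nabla_v f^0 + G_1(t) + G_2(t,s)$ with $G_1 = H(t,t) - \delta\nabla_v f^0$ (handled by \eqref{normkernel1}) and $G_2 = H(t,s) - H(t,t)$ vanishing at $s=t$ (handled by \eqref{normekernel2}) is precisely the paper's splitting into $H^1$ and $G = (\nabla_v f - \nabla_v f^0)J + \nabla_v f^0 (J-1)$. The regularity bookkeeping you describe — trading $\partial_s f$ and $\partial_s\Psi$ via the Vlasov/Burgers equations, invoking \eqref{frakM}, \eqref{estim-Burgers-infini}, \eqref{estimPsi}, and absorbing the $v$-weights into $\sigma$ — matches the paper's argument; the only minor imprecision is that $\|J(t)-1\|_{W^{p,\infty}}$ contributes $T^{1/2}$ rather than $T$, so the sharp bound on the $G_1$ piece is $T^{1/2}\Lambda(T,R)$, but this does not affect the conclusion.
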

 
 \begin{proof}[Proof of Lemma~\ref{lem-simplification}]
We keep the same notations as in Lemma~\ref{redu} and in particular we use the expression \eqref{Hdef}.
We can first write that 
 \begin{equation}
 \label{H1definition} H_{(i,j), \, (k,l)}(t,s,x,v)= H_{(i,j),\,(k,l)}(t,t,x,v) +  H_{(i,j),\,(k,l)}^1(t,s,x,v)
 \end{equation}
  with 
  $$ H_{(i,j),\,(k,l)}^1(t,s,x,v)   =H_{(i,j), \, (k,l)}(t,s,x,v) - H_{(i,j),\,(k,l)}(t,t,x,v).$$
  Since $H_{(i,j),\,(k,l)}^1(t,t,x,v)=0$, we can use \eqref{normekernel2} in  Remark \ref{remarknormekernel}
   to get  that
   $$\|K_{H_{(i,j),\,(k,l)}^1} \partial_{x}^{\alpha(k,l)}\rho\|_{L^2([0, T], L^2_{x})}
    \lesssim T \sup_{t,s}\| \partial_{s}  H_{(i,j),\,(k,l)}^1(t,s) \|_{H^{p}_{\sigma}}\| \rho \|_{L^2([0, T], H^{2m})} $$  with 
      $p = 1 + p_{0}$ ($p_{0}$ being defined in \eqref{seuil}) and  $\sigma$ such that $\sigma>{d\over 2}$
       and $ 1+ \sigma <2r$.
     We thus have  to estimate $ \sup_{t,s}\| \partial_{s}  H_{(i,j),\,(k,l)}^1(t,s) \|_{\Hc^{p}_{\sigma}}$.
     
      Note that, by assumption on $m$, we have that 
       $  p + 2 < 2m- {d \over 2}$. We use again the notation
       $$
       J(t,x,v)= |\det(\nabla_v \Phi(t,x,v))|, \quad \tilde J (s,t,x,v)= |\det(\nabla_v \Psi(s,t,x,v))|.
       $$
       According to \eqref{frakM}, \eqref{estimPsi}, \eqref{estim-Burgers-infini}, we can always put the terms involving $\mathfrak{M}$, 
     $\Psi$, $J$,   $\tilde{J}$ and their derivatives  in $L^\infty$, except when all the  derivatives hit $J$ or $\tilde{J}$.
      Note that  due to the expression \eqref{Hdef},  to compute  $\partial_{s} H^{1}$, we need the derivative of $\Psi$ and $\mathfrak{M}$
       with respect to their first argument so that we actually need estimates of $\partial_{t}\Psi$ in view of our previous
       notation.
      This yields
   \begin{multline}
   \label{estH11} \| \partial_{s}  H_{(i,j),\,(k,l)}^1(t,s) \|_{\Hc^{p}_{\sigma}} \leq \Lambda(T, R) \Big(
     \|  \partial_{t} \nabla_{v} f\|_{\Hc^{p}_{\sigma}}  + \left\| ( 1 + |v|^2)^{1 \over 2} \nabla_{v} f \right\|_{\Hc^{p+ 1}_{\sigma}}  
      \\+ \sum_{| \alpha|= p}  \left\| | \nabla_{v} f(s,x-(t-s)v, \Psi)|\, |(\partial_{x,v}^{\alpha} \nabla_{v}J) (t, \cdot, \Psi)|\right\|_{\Hc^0_{\sigma}}
       + \left\| | \nabla_{v} f(s,x-(t-s)v, \Psi)|\, |\partial_{x,v}^{\alpha}\partial_{s} \tilde{J}|\right\|_{\Hc^0_{\sigma}}\Big).
       \end{multline}
       Note that to obtain this estimate,  we have used that integrals under the form 
       $$ I= \left|  \int_{\mathbb{T}^d \times \mathbb{R}^d} |g(x-v(t-s), \Psi(s,t,x,v) )|^2 (1+ |v|^2)^n \, dx dv \right|$$
        with $n= \sigma$ or $\sigma + 1$ can be bounded by $ \Lambda(T, R)\|g\|_{\Hc^0_{n}}^2.$
          Indeed, by setting, $v \mapsto w=  \Psi(s,t,x,v)$ and by using Lemma \ref{redressement2} (in particular the fact that
           the Jacobian of the change of variable is bounded and the fact that $|w-v|$ is bounded), we get
          $$ I \leq \Lambda(T, R)  \int_{\mathbb{T}^d \times \mathbb{R}^d} |g(X(s,t,x,w), w) |^2  (1 + |w|^2)^n\, dx dw.$$
          Next, we can use again Lemma \ref{redressement2} and the change of variable $x \mapsto y= X(s,t,x,w)$
           to finally obtain
           $$   I \leq \Lambda(T, R)  \|g\|_{\Hc^0_{n}}^2.$$
            Going back to \eqref{estH11}, we observe that by using the equation \eqref{VP}, we get that
        $$  \|  \partial_{t} \nabla_{v} f\|_{\Hc^{p}_{\sigma}}  + \left\| ( 1 + |v|^2)^{1 \over 2} \nabla_{v} f \right\|_{\Hc^{p+ 1}_{\sigma}}  
         \lesssim  \|f\|_{\Hc^{2m- 1}_{2r}}$$
         since $2m > m_{0}$ implies that   $2m\geq 4 + p_{0}$. Also, 
           by  using again $L^\infty$ estimates, we have for $| \alpha |=p$, 
       \begin{multline*}
       \left\| | \nabla_{v} f(s,x-(t-s)v, \Psi)|\, |(\partial_{x,v}^{\alpha} \nabla_{v}J) (t, \cdot, \Psi)|\right\|_{\Hc^0_{\sigma}}
       + \left\| | \nabla_{v} f(s,x-(t-s)v, \Psi)|\, |\partial_{x,v}^{\alpha}\partial_{s} \tilde J|\right\|_{\Hc^0_{\sigma}}\Big) \\
        \leq \Lambda(T, R)   \Big(\left\| | \nabla_{v} f(s,x-(t-s)v, \Psi)|\, |(\partial_{x,v}^{\alpha} \nabla_{v}^2\Phi) (t, \cdot, \Psi)|\right\|_{\Hc^0_{\sigma}}
         \\+ \left\| | \nabla_{v} f(s,x-(t-s)v, \Psi)|\, |\partial_{x,v}^{\alpha}\partial_{s} \nabla_{v} \Psi|\right\|_{\Hc^0_{\sigma}}\Big)
       \end{multline*}
       and we estimate the above right-hand side by  
       $$ \Lambda(T, R) \| (1+ |v|^2)^{\sigma +1\over 2} \nabla_{v} f \|_{L^\infty_{x,v}} \Big( \|  (\partial_{x,v}^{\alpha} \nabla_{v}^2\Phi) (t, \cdot, \Psi)\|_{L^\infty_{v}L^2_{x}} 
        +  \|(1+ |v|^2)^{-{1\over 2}}\partial_{x,v}^{\alpha}\partial_{s} \nabla_{v} \Psi\|_{L^\infty_{v}L^2_{x}}\Big).$$
         Since   $2m\geq 4 + p_{0}$,  the above expression can be again  finally bounded  by $\Lambda(T, R)$
          by using \eqref{estimPsi}, \eqref{estim-Burgers} and the Sobolev embedding in $x,\, v$
           to estimate $ \| (1+ |v|^2)^{\sigma +1\over 2} \nabla_{v} f \|_{L^\infty_{x,v}}.$
          We have thus proven that 
          $$\|K_{H_{(i,j),\,(k,l)}^1} \partial_{x}^{\alpha(k,l)}\rho\|_{L^2([0, T], L^2_{x})}
    \lesssim T \Lambda(T, R)$$
    and as a consequence, that this term can be included in the remainder.

    \bigskip
     In view of \eqref{H1definition} and the above estimate, since 
    $$H_{(i,j),\,(k,l)}(t,t,x,v)= \delta_{(i,j),\,(k,l)} \nabla_{v} f(t, x, v) J(t,x,v),$$ 
    the integral system \eqref{eqrho} reduces to 
    $$ \partial_{x}^{\alpha(i,j)} \rho = K_{ \nabla_{v}f(t) J(t)}(\partial_{x}^{\alpha(i,j)}) \rho + \mathcal{R}_{(i,j)}^1$$
     with $ \mathcal{R}_{(i,j)}^1$ that satisfies
     $$\left\|  \mathcal{R}_{(i,j)}^1 \right\|_{L^2([0, T], L^2_{x})} \leq T^{1 \over 2} \Lambda(T, R).$$
      We can further simplify this integral equation by writing
      $$  \nabla_{v} f(t, x, v) J(t,x,v) =  \nabla_{v} f^0(x,v) + \left( \nabla_{v}f(t, x, v)  - \nabla_{v} f^0(x,v)\right) J(t,x,v)  + \nabla_{v} f^{0}(x,v) (J(t,x,v) - 1).$$
      Let us set 
      $$G(t,x,v) =  \left( \nabla_{v}f(t, x, v)  - \nabla_{v} f^0(x,v)\right) J(t,x,v) +   \nabla_{v} f^{0}(x,v) (J(t,x,v) - 1).$$ 
       By using Proposition \ref{propint} and  \eqref{normkernel1} in Remark \ref{remarknormekernel}, we obtain that
       $$ \|K_{G}( \partial_{x}^{\alpha(i,j)} \rho )\|_{L^2([0, T], L^2_{x})}
       \leq  (\sup_{[0, T]} \| G(t)\|_{\Hc^p_{\sigma}})  \| \rho \|_{L^2([0, T], H^{2m})},$$
        with $p= 1 + p_{0}$ and $\sigma >{d \over 2}, $ $1+ \sigma \leq 2r$.
          From the definition of $G$, we find
        $$ \| G(t)\|_{\Hc^p_{\sigma}}  \lesssim \| \nabla_{v}f(t,\cdot) - \nabla_{v}f^0 \|_{\Hc^p_{\sigma}} \|J(t)\|_{W^{p, \infty}}
         + \| \nabla_{v} f(t, \cdot) \|_{\Hc^p_{\sigma}} \|J(t)-1\|_{W^{p, \infty}}.$$
         Since we have  $ 1 + p < 2m - {d \over 2} -1 $,   $2 + p \leq 2m-1$,  we obtain by using
          \eqref{estim-Burgers-infini} and the equation \eqref{VP} that
          $$   \sup_{[0, T]} \| G(t)\|_{\Hc^p_{\sigma}}
           \leq \Lambda(T, R) \Big( T \sup_{[0, T]} \| \partial_{t}f \|_{\Hc^p_{\sigma}} + T^{1 \over 2}\Big)
            \leq T^{1 \over 2} \Lambda(T, R).$$
          This ends the proof.
         
  \end{proof}
 
 We therefore proceed with the study of the integral scalar equation
\begin{equation}
 \label{eqintegrale2}
 \tilde h(t,x)= K_{\nabla_{v}f^0} ( (I - \eps^2 \Delta)^{-1}\tilde h) + \tilde R(t,x), \quad 0 \leq t \leq T,
 \end{equation}
 where $\tilde R$ is a given source term.
 It will be useful to introduce a positive parameter $\gamma$ (which will be chosen large enough but independent of $\eps$)
  and to set
  \begin{equation}
  \label{expweight} \tilde h(t,x)= e^{  \gamma t}   h(t,x), \quad   \tilde R(t,x) = e^{\gamma t }  \mathcal{R}(t,x)
  \end{equation}
   so that   \eqref{eqintegrale2} becomes
    \begin{equation}
 \label{eqintegrale3}
  h(t,x)=e^{- \gamma t} K_{\nabla_{v}f^0} ( e^{ \gamma t } (I- \eps^2 \Delta)^{-1}h) +   \mathcal{R}(t,x), \quad 0 \leq t \leq T
 \end{equation}
    Without loss of generality, we can assume that $ \mathcal{R}$ is equal to zero for $t<0$ and for $t>T$ and we shall also set
   $h= 0$ for all $t<0$. Note that this does not affect the value of $h$ on $[0, T]$. This allows us to study the equation for  $ t \in \mathbb{R}$.
    Our aim is to prove that if the Penrose condition is satisfied by $f^0$ then we can  estimate $h$ in $L^2_{t,x}$
     with respect to $R$ in $L^2_{t,x}$.
    
    One first key step is to relate $ e^{- \gamma t}K_{\nabla_{v}f^0}(e^{\gamma t} \cdot)$ to a pseudodifferential operator.
  \begin{lemma}
  \label{lemKpseudo}
    Let us set
     \begin{equation}
     \label{adef}
     a(x, \zeta)=   \int_{0}^{+ \infty} e^{-(\gamma + i \tau)s}\,i k \cdot  ( \mathcal{F}_{v} \nabla_{v} f^0)(x,ks) \, d s,  \quad \zeta= (\gamma, \tau, k) \in (0, +\infty) \times \mathbb{R}
      \times \mathbb{R}^d \backslash\{0\}
     \end{equation}
      where again $\mathcal{F}_{v}$ stands for the Fourier transform in the $v$ variable.
      Then, we have that
   $$ e^{- \gamma t}K_{\nabla_{v}f^0}(e^{\gamma t} h) = Op_{a}^\gamma (h), \quad \forall h \in \mathcal{S}$$
    with $Op_{a}^\gamma$ the quantification of $a$ defined in Section \ref{sectionpseudo}.
  \end{lemma}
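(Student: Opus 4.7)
The strategy is a direct Fourier-analytic computation, building on the intermediate formula \eqref{KGbis} that was established in the proof of Proposition \ref{propint}. First I would specialize that identity to $G(t,s,x,v) = \nabla_v f^0(x,v)$, which does not depend on the time arguments. For a test function $h \in \mathcal{S}$, extended by zero for $t<0$, and $F(s,x) = e^{\gamma s}h(s,x)$, formula \eqref{KGbis} gives
\begin{equation*}
K_{\nabla_v f^0}(e^{\gamma s}h)(t,x) = \int_{-\infty}^t e^{\gamma s}\sum_k \hat h_k(s)\, e^{ik\cdot x}\, ik \cdot (\mathcal{F}_v \nabla_v f^0)(x, k(t-s))\,ds,
\end{equation*}
where the lower limit has been replaced by $-\infty$ thanks to the vanishing of $h$ for negative times.

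Next I would multiply by $e^{-\gamma t}$ and perform the change of variable $\sigma = t-s$ to obtain
\begin{equation*}
e^{-\gamma t} K_{\nabla_v f^0}(e^{\gamma s}h)(t,x) = \sum_k e^{ik \cdot x} \int_0^{+\infty} e^{-\gamma \sigma}\, \hat h_k(t-\sigma)\, ik \cdot (\mathcal{F}_v \nabla_v f^0)(x, k\sigma)\, d\sigma.
\end{equation*}
The $e^{-\gamma \sigma}$ factor ensures absolute convergence of the $\sigma$-integral, which is exactly why introducing the exponential weight \eqref{expweight} was useful.

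The final step is to take the inverse time-Fourier representation $\hat h_k(t-\sigma) = \int e^{i\tau(t-\sigma)}\, \tilde h_k(\tau)\, \frac{d\tau}{2\pi}$ (legitimate since $h \in \mathcal{S}$), then swap the $\sigma$ and $\tau$ integrals by Fubini. This yields
\begin{equation*}
e^{-\gamma t} K_{\nabla_v f^0}(e^{\gamma s}h)(t,x) = \sum_k e^{ik \cdot x} \int e^{i\tau t}\, \tilde h_k(\tau) \left(\int_0^{+\infty} e^{-(\gamma + i\tau)\sigma}\, ik \cdot (\mathcal{F}_v \nabla_v f^0)(x, k\sigma)\, d\sigma\right) \frac{d\tau}{2\pi},
\end{equation*}
and the parenthesized quantity is exactly the symbol $a(x,\gamma,\tau,k)$ from \eqref{adef}. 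Matching this expression against the definition of $Op_a^\gamma$ (which will be given in the pseudodifferential appendix) concludes the identification.

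There is no serious obstacle here: the lemma is essentially a restatement of the computation already performed in Proposition \ref{propint} in a different order. The only points requiring care are the justification of Fubini (ensured by the Schwartz hypothesis on $h$ together with the exponential decay in $\sigma$) and confirming that the time–Fourier convention used here matches the one adopted in the definition of $Op_a^\gamma$ in Section \ref{sectionpseudo}.
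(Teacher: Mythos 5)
Your proof is correct and follows essentially the same Fourier-analytic computation as the paper: expand $h$ in Fourier series in $x$, take the Fourier transform of $\nabla_v f^0$ in $v$, take the Fourier transform of $h$ in $t$, change variables $s \mapsto \sigma = t-s$, and identify the resulting inner integral with the symbol $a$. The only cosmetic difference is the order of operations (you invoke \eqref{KGbis} and perform the time-change-of-variable before the time-Fourier transform, whereas the paper takes Fourier in $(t,x)$ simultaneously and changes variables at the end), which is mathematically inconsequential.
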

  Note that  as usual when dealing with pseudodifferential  calculus on the torus,  we manipulate symbols defined in the whole space $\mathbb{R}^d$   in the $k$ variable, though they are only used
   for  $k \in  \mathbb{Z}^d$ in the quantization formula.
  \begin{proof}[Proof of Lemma~\ref{lemKpseudo}]
   Since $h$ is $0$ in the past, we first note
    that
    $$ e^{- \gamma t}K_{\nabla_{v}f^0}(e^{\gamma t} h) =
     \int_{-\infty}^t  e^{- \gamma (t-s)}\int \nabla_{x} h (s,  x - (t-s) v) \cdot
      \nabla_{v} f^0(x,v)\, dv ds.$$
   By using the Fourier transform in $x$ and $t$, we can write
   that 
    $$h(x,s)= \int_{\mathbb{Z}^d \times \mathbb{R} } e^{i (k \cdot x + \tau s) }  \hat h(k, \tau)\, dk d\tau$$
  with the convention that $\mathbb{Z}^d $ is equipped with the discrete measure $dk$. This yields
  \begin{align*} 
  &e^{- \gamma t}K_{\nabla_{v}f^0}(e^{\gamma t} h) \\
   &=   \int_{\mathbb{Z}^d \times \mathbb{R} } e^{i (k \cdot x +  \tau t) }  \Big(  \int_{-\infty}^t   e^{- (\gamma + i \tau)(t-s) }  \int  e^{ -i k \cdot v(t-s)} ik \cdot  \nabla_v f^0(x, v)\, dv\, ds \Big)
    \hat h(k, \tau) dk d\tau\\
  &   =      \int_{\mathbb{Z}^d \times \mathbb{R} } e^{i (k \cdot x +  \tau t)}   \Big(  \int_{-\infty}^t e^{- (\gamma + i \tau)(t-s) }  ik\cdot  ( \mathcal{F}_{v}\nabla_{v} f^0)  (x, k(t-s))\, ds \Big) \hat h(k, \tau)\, dk d \tau.
\end{align*}
 Changing variable in the inside integral, we finally obtain
 $$  e^{- \gamma t}K_{\nabla_{v}f^0}(e^{\gamma t} h)=   \int_{\mathbb{Z}^d \times \mathbb{R} } e^{i (k \cdot x +  \tau t)}  a(x, \zeta) \hat h(k, \tau)\, dk d\tau$$
  with
  $$ a(X, \zeta)=   \int_{0}^{+ \infty} e^{-(\gamma + i \tau)s}\, ik \cdot  ( \mathcal{F}_{v} \nabla_{v}  f^0)(x,ks ) \, d s$$
   as claimed.

  \end{proof}

  Note that $a$ does not actually depend on the time variable $t$. We shall now prove that $a$ defined above is a good zero order symbol.
   The symbol seminorms are defined in Section \ref{sectionpseudo}, see \eqref{pseudo0}, \eqref{pseudo1}.
   \begin{lemma}
   \label{lemsymbole}
    Consider  $a(x, \zeta)$, the symbol defined in \eqref{adef} with  $\zeta= (\gamma, \tau, k)= (\gamma, \xi)$, $ \gamma  >0$
   and  take $\sigma>d/2$.
     Then we have that there exists $C_{M}>0$ that depends only on $M$  such that
   \begin{eqnarray*}
  &     |a |_{M, 0}   \leq C_{M}  \|f^0 \|_{\Hc^{2m}_{\sigma}}, \quad   M  \leq 2 m - 3, \\
   & 
  |a |_{M, 1} \leq C_{M}  \|f^0 \|_{\Hc^{2m}_{\sigma +  2}}, \quad  M  \leq 2 m-4.
   \end{eqnarray*}
   Moreover, $a$ is homogeneous of degree zero:
   $$ a(x, \zeta)= a\left(x,  {  \zeta \over \langle \zeta \rangle}\right).$$
   \end{lemma}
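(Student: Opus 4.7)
\emph{Homogeneity.} I first establish the degree-zero homogeneity by the substitution $s' = \lambda s$ in the defining integral: for any $\lambda > 0$,
\[
a(x, \lambda \zeta) = \int_{0}^{+\infty} e^{-\lambda(\gamma+i\tau) s}\, i \lambda k \cdot (\mathcal{F}_{v} \nabla_{v} f^0)(x, \lambda k s)\, ds = a(x, \zeta),
\]
so that $a(x,\cdot)$ is homogeneous of degree $0$ in $\zeta$; taking $\lambda = 1/\langle\zeta\rangle$ yields the stated identity $a(x,\zeta) = a(x,\zeta/\langle\zeta\rangle)$. Consequently each derivative $\partial_\zeta^\beta a$ is homogeneous of degree $-|\beta|$, so the semi-norms $|a|_{M,k}$ of Section~\ref{sectionpseudo}, which weight $\zeta$-derivatives by $\langle\zeta\rangle^{|\beta|}$, reduce to uniform estimates on the set $\{|\zeta| \asymp 1\}$.

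\emph{Estimate of $|a|_{M,0}$.} Using $\mathcal{F}_{v}(\nabla_{v} g)(\eta) = i\eta\, \mathcal{F}_{v} g(\eta)$ and differentiating under the integral, I rewrite
\[
\partial_x^\alpha a(x,\zeta) = -\int_{0}^{+\infty} s|k|^2 e^{-(\gamma+i\tau)s}\, \mathcal{F}_{v} \partial_x^\alpha f^0(x, ks)\, ds.
\]
For $\sigma > d/2$ and $N \in \mathbb{N}$, a standard consequence of the identity $(1+|\eta|^2)^{N/2}\mathcal{F}_{v} g = \mathcal{F}_{v}((1-\Delta_v)^{N/2} g)$ together with a weighted Cauchy-Schwarz inequality in $v$ gives the pointwise bound
\[
|\mathcal{F}_{v} \partial_x^\alpha f^0(x,\eta)| \lesssim (1+|\eta|)^{-N}\, \|(1+|v|^2)^{\sigma/2}(1-\Delta_v)^{N/2} \partial_x^\alpha f^0(x,\cdot)\|_{L^2_v}.
\]
After the substitution $u = |k|s$ (valid on $\{|\zeta|\asymp 1\}\cap\{k\neq 0\}$; the locus $k=0$ is handled directly via the $|k|^2$ prefactor), and noting that $\int_{0}^{+\infty} u(1+u)^{-N}\,du < \infty$ for $N \geq 3$, I obtain
\[
|\partial_x^\alpha a(x,\zeta)| \lesssim \|(1+|v|^2)^{\sigma/2}(1-\Delta_v)^{3/2}\partial_x^\alpha f^0(x,\cdot)\|_{L^2_v} \lesssim \|f^0\|_{\Hc^{2m}_\sigma},
\]
provided $|\alpha| + 3 \leq 2m$, i.e. $M \leq 2m-3$.

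\emph{Estimate of $|a|_{M,1}$ and main obstacle.} A derivative in $\gamma$ or $\tau$ brings down a factor of $-s$ (resp.\ $-is$), while a derivative in $k_j$ produces, via $\partial_{k_j} \mathcal{F}_{v} g(x, ks) = -is\, \mathcal{F}_{v}(v_j g)(x, ks)$, an extra factor of $-s v_j$ together with a lower-order $s$-independent term. Each $\zeta$-derivative therefore requires in the worst case one additional power of $\eta$-decay of $\mathcal{F}_{v} f^0$ (raising $N$ from $3$ to $4$, i.e. one more $v$-derivative of $f^0$) and at most one extra factor of $v_j$ (raising the $v$-weight from $\sigma$ to $\sigma+2$, where the additional unit comes from the Cauchy-Schwarz step that converts $v_j$-multiplication into the weight). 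Repeating the previous estimate then yields $|a|_{M,1} \lesssim \|f^0\|_{\Hc^{2m}_{\sigma+2}}$ for $|\alpha| + 4 \leq 2m$, i.e.\ $M \leq 2m-4$. The main subtlety is the uniformity in $(\gamma,\tau,k)$ near the degenerate loci $\gamma = 0^+$ and $|k|\to 0^+$: by the homogeneity, one can work on the sphere $\gamma^2 + \tau^2 + |k|^2 = 1$, where at least one of these parameters is bounded away from zero, so that one can always close the estimates either via the substitution $u = |k|s$ (when $|k|$ is bounded below) or via an integration by parts in $s$ against $e^{-(\gamma+i\tau)s}$ (when $|\gamma+i\tau|$ is bounded below).
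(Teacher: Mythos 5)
Your overall scheme is essentially the same as the paper's: prove the degree-zero homogeneity by scaling in $s$, use $(I-\Delta_v)$-type smoothing of $f^0$ to create decay of $\mathcal{F}_{v} f^0$ in the $\eta$-variable, absorb it against the $s$-integral (via $u=|k|s$ when $|k|$ is bounded below, by parts against $e^{-(\gamma+i\tau)s}$ when $|\gamma+i\tau|$ is bounded below), and count one extra $v$-derivative and two extra units of $v$-weight for each $\zeta$-derivative. The derivative bookkeeping ($N=3$ for $|a|_{M,0}$, one more for $|a|_{M,1}$) matches the paper's use of \eqref{estsymbol1} with $q=2,3$.

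The genuine gap is that you estimate $\partial_x^\alpha a(x,\zeta)$ \emph{pointwise in $x$}, whereas the seminorms \eqref{pseudo0}--\eqref{pseudo1} are $\|\mathcal{F}_{x}(\partial_x^\alpha a)\|_{L^2(\mathbb{Z}^d, L^\infty_\zeta)}$, i.e.\ $L^2$ in the $x$-Fourier variable with the sup over $\zeta$ \emph{inside}. A pointwise bound $|\partial_x^\alpha a(x,\zeta)|\leq A(x)$ does not transfer to $|\mathcal{F}_x(\partial_x^\alpha a)(l,\zeta)|\leq \widehat{A}(l)$ (one cannot move absolute values inside a Fourier transform), and $\sup_\zeta \|\mathcal{F}_x(\partial_x^\alpha a)(\cdot,\zeta)\|_{L^2_l}$ is a strictly weaker norm than $\|\mathcal{F}_x(\partial_x^\alpha a)\|_{L^2_l L^\infty_\zeta}$. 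Relatedly, your displayed chain ends with $\|(1+|v|^2)^{\sigma/2}(1-\Delta_v)^{3/2}\partial_x^\alpha f^0(x,\cdot)\|_{L^2_v} \lesssim \|f^0\|_{\Hc^{2m}_\sigma}$, where the left side is a function of $x$ and the right is a constant; making this pointwise would cost an additional Sobolev embedding (roughly $d/2$ more $x$-derivatives) and give only $M \leq 2m-3-d/2$, weaker than claimed. The repair is exactly what the paper does in \eqref{estsymbol1}: take $\mathcal{F}_x$ first (it commutes with the $s$-integral and with the $\zeta$-operations), estimate $(\mathcal{F}_x\partial_x^\alpha a)(l,\zeta)$ pointwise in $l$ by a $\zeta$-independent quantity $B(l)$, and then recover $|a|_{M,k}$ via Bessel--Parseval $\sum_l B(l)^2 \lesssim \|f^0\|_{\Hc^{2m}_\sigma}^2$. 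Once rewritten on the Fourier side, your argument goes through.
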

       \begin{proof}[Proof of Lemma~\ref{lemsymbole}]
    Let us set $G(x, \eta)=   ( \mathcal{F}_{v} \nabla_{v} f^0)(x,\eta )$ so that
    $$ a(x, \zeta)=  \int_{0}^{+ \infty} e^{-(\gamma + i \tau)s}\, {ik}\cdot   G(x, ks) \, d s.$$
     and
           $$ \tilde \zeta = { \zeta \over \langle \zeta \rangle}, \quad \langle \zeta \rangle = (\gamma^2 + \tau^2 + |k|^2)^{1 \over 2}.$$
    Note that we have for $\sigma>d/2$ and every $\alpha$, $\beta$, $q$, the estimate
   \begin{equation}
   \label{estsymbol1}
     |(\mathcal{F}_{x}\partial_{x}^\alpha \partial_{\eta}^\beta G)(l, \eta)| \lesssim { 1 \over 1 + |\eta |^q} \left( \int_{\R^d}  
    ( 1 + |  v  |^2)^{\sigma + \beta} \, |(\mathcal{F}_{x} \nabla_{v} \partial_{x}^\alpha ( I - \Delta_{v})^{q \over 2} f^0)(l, v ) |^2 dv \right)^{1 \over 2}.
     \end{equation}
   By a change of variable $s= \tilde s /\langle \zeta \rangle$ in the integral defining $a$, we also easily observe  that $a$ is homogeneous of degree zero
          $$ 
       a(x, \zeta)= a\left(x,  {  \zeta \over \langle \zeta \rangle}\right).
       $$
        Consequently, by using the definition of the symbol norms in the appendix,   it suffices to prove that
      \begin{eqnarray}
  \label{symbolepreuve1}&   \| \mathcal F_{x}(\partial_{x}^\alpha  a)(\cdot, \tilde  \zeta) \|_{L^2(\mathbb{Z}^d, L^\infty(S_{+}))} \lesssim  \|f^0 \|_{\Hc^{2m}_{\sigma}}, \quad   | \alpha|  \leq 2 m - 3 , \\
 \label{symbolepreuve2}  & 
  \| (\mathcal F_{x} \partial_{x}^\alpha \nabla_{\tilde \xi}  a)(\cdot , \tilde  \zeta) \|_{L^2(\mathbb{Z}^d, L^\infty(S_{+}))} \lesssim   \|f^0 \|_{\Hc^{2m}_{\sigma +  1}}, \quad |\alpha | \leq  2m- 4.
   \end{eqnarray}
   where $S_{+}= \{ \tilde \zeta = (\tilde \gamma, \tilde \tau, \tilde k), \, \langle \tilde \zeta \rangle = 1, \, \tilde \gamma >0, \, \tilde k \neq 0\}.$
   
    Since we have
    $$(\mathcal{F}_{x} \partial_{x}^\alpha a)(l,  \tilde \zeta)=   \int_{0}^{+ \infty} e^{-( \tilde \gamma + i  \tilde \tau)s}\, i  \tilde k \cdot   (\mathcal{F}_{x}\partial_{x}^\alpha )G(l,  \tilde k s) \, d s,  $$
     by using \eqref{estsymbol1} with $q=2$, and $\beta = 0$,  we obtain that 
     \begin{align*}   |(\mathcal{F}_{x}\partial_{x}^\alpha a)(l, \tilde  \zeta)|  & \lesssim 
     \left( \int_{\R^d}  
    ( 1 + |  v  |^2)^{\sigma} \, |(\mathcal{F}_{x} \nabla_{v} \partial_{x}^\alpha ( I - \Delta_{v}) f^0)(l, v ) |^2 dv \right)^{1 \over 2}
    \, \int_{0}^{+ \infty}  {| \tilde k | \over 1 + |\tilde k|^2 s^2 } ds  \\
    & \lesssim 
    \left( \int_{\R^d}  
    ( 1 + |  v  |^2)^{\sigma} \, |(\mathcal{F}_{x} \nabla_{v} \partial_{x}^\alpha ( I - \Delta_{v}) f^0)(l, v ) |^2 dv \right)^{1 \over 2} 
      \int_{0}^{+ \infty}  {1 \over 1 +  s^2 } ds.
      \end{align*}
     This yields by using the Bessel identity
     $$ \| \mathcal{F}_{x} \partial_{x}^\alpha a \|_{L^2(\mathbb{Z}^d, L^\infty(\mathbb{S}_{+}))} \lesssim \|f^0\|_{\Hc^{|\alpha| + 3}_{\sigma}}$$
      and hence \eqref{symbolepreuve1} is proven.
       Let us turn to the proof of \eqref{symbolepreuve2}.
     To estimate $\partial_{x}^\alpha \partial_{\tilde \xi} a$,  we have to estimate the following two types
      of  symbols
      $$ 
       I_{1}^\alpha(x,  \tilde \zeta ) =  \int_{0}^{+ \infty} e^{-(\tilde \gamma + i \tilde  \tau)s}\,  e_{j} \cdot   \partial_{x}^\alpha  G(x,\tilde k s ) \, ds, \quad    I_{1}^\alpha (x, \tilde \zeta)= \int_{0}^{+ \infty} e^{-(\tilde \gamma + i \tilde  \tau)s}\,  \tilde k s \cdot   \partial_{x}^\alpha   \partial_{\eta}^{\beta_{1}}G(x, \tilde k s) \, ds $$
        where $e_{j}$ is a unit vector and $|\beta_{1}|\leq 1$.         For $I_1^\alpha$, if  $|\tilde k| \geq  { 1  \over 2}$, we can proceed in the same way with \eqref{estsymbol1}
 for $ q = 2$, $\beta = 0$ and  obtain
\begin{align*}
 |\mathcal{F}_{x}I_{1}^\alpha(l, \tilde \zeta) | &    \lesssim \left( \int_{\R^d}  
    ( 1 + |  v  |^2)^{\sigma} \,| (\mathcal{F}_{x} \nabla_{v} \partial_{x}^\alpha ( I - \Delta_{v}) f^0)(l, v ) |^2 dv \right)^{1 \over 2} \,  \int_{0}^{+ \infty}  { 1  \over 1 + |\tilde k|^2 s^2 } ds \\
    &    \lesssim \left( \int_{\R^d}  
    ( 1 + |  v  |^2)^{\sigma} \, |(\mathcal{F}_{x} \nabla_{v} \partial_{x}^\alpha ( I - \Delta_{v}) f^0)(l, v ) |^2 dv \right)^{1 \over 2}
    \end{align*}
     Note that for this argument, we use in a crucial way that $| \tilde k|$ is bounded from below.
      Otherwise since $\tilde \zeta \in S_{+}$, we have that $|\tilde \gamma|^2 + |\tilde {\tau}|^2 \geq {3 \over 4}$ and  consequently, 
      we can integrate by parts  in $s$ in the integral to obtain that
    $$ | \mathcal F_{x} I_{1}^\alpha (l ,\zeta) |  \lesssim   | (\mathcal{F}_{x} \partial_{x}^\alpha G)(l,0)| + \int_{0}^{+\infty}  | \tilde k | \, |  ( \mathcal{F}_{x}\partial_{x}^\alpha \nabla_{\eta} \ G)(l, \tilde ks)ds$$
     and hence, by using again \eqref{estsymbol1} with  $q=2$, and $|\beta| =1$, we finally obtain that
     $$   \| \mathcal F_{x }I_{1}^\alpha \|_{L^2(\mathbb{Z}^d,L^\infty (\mathbb{S}_{+}) )}   \lesssim \|f^0\|_{\Hc^{2m}_{\sigma+1}}, \quad | \alpha | \leq   2m - 3.$$
     To estimate $I_{2}^\alpha$, we proceed as above: if $|\tilde k| \geq  { 1  \over 2}$, we rely on  \eqref{estsymbol1}
         with $q = 3$ and $|\beta|\leq 1$, otherwise we use the same integration by parts argument together with \eqref{estsymbol1} with  $q=2,3$ and $|\beta| \leq 2$. We obtain
       $$ \| \mathcal{F}_{x}I_{2}^\alpha\|_{L^2(\mathbb{Z}^d, L^\infty(S_{+}))} \lesssim \|f^0\|_{\Hc^{2m}_{\sigma+2}}, \quad |\alpha | \leq  2m - 4.$$
 This ends the proof.

      \end{proof}   
     
     We can now use symbolic calculus to estimate  the solution of the integral equation \eqref{eqintegrale3}.
     \begin{proposition}
     \label{proppenrose}
       Consider $h$ the solution of   \eqref{eqintegrale3}, assume that $2m> 4 +{ d \over 2}$, that $2r>2+{ d \over 2}$ and that  for every $x \in \mathbb{T}^d$, the profile $f^0(x, \cdot)$ satisfies the $c_{0}$ Penrose stability criterion.  
       Then there exists 
       $  \Lambda[\|f^0 \|_{\Hc^{2m}_{2r}}]$ such that for every $\gamma \geq  \Lambda[\|f^0 \|_{\Hc^{2m}_{2r}}]$, we have the estimate
       $$ \| h\|_{L^2(\mathbb{R} \times \mathbb{T}^d)}  \lesssim \Lambda[\|f^0 \|_{\Hc^{2m}_{2r}}]  \|\mathcal{R}\|_{L^2(\mathbb{R} \times \mathbb{T}^d)}.$$     
     \end{proposition}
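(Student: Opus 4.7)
The plan is to recast the integral equation \eqref{eqintegrale3} as a single pseudodifferential equation on $\R\times \T^d$ and to invert it by means of the $\gamma$-calculus recalled at the end of the paper. By Lemma \ref{lemKpseudo}, $e^{-\gamma t}K_{\na_v f^0}(e^{\gamma t}\cdot) = Op_{a}^\gamma$ with $a$ given by \eqref{adef}. Since $(I-\eps^2\Delta)^{-1}$ is a Fourier multiplier in $x$ alone with symbol $(1+\eps^2|k|^2)^{-1}$, its composition with $Op_a^\gamma$ is \emph{exactly} (without any commutator remainder) the pseudodifferential operator $Op_{\tilde a}^\gamma$ with symbol
\begin{equation*}
\tilde a(x,\gamma,\tau,k) := \frac{a(x,\gamma,\tau,k)}{1+\eps^2|k|^2}.
\end{equation*}
Thus \eqref{eqintegrale3} reads $Op_{b}^\gamma h = \Rc$, with $b := 1-\tilde a$. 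The key algebraic step is to perform the change of variable $s\mapsto \eps s$ in \eqref{adef}, which yields
\begin{equation*}
a(x,\gamma,\tau,k) = (1+\eps^2|k|^2)\bigl(1-\mathcal{P}(\eps\gamma,\eps\tau,\eps k, f^0(x,\cdot))\bigr),
\end{equation*}
so that $b(x,\gamma,\tau,k) = \mathcal{P}(\eps\gamma,\eps\tau,\eps k, f^0(x,\cdot))$ for $k\neq 0$, and $b(x,\gamma,\tau,0)=1$ since $a$ vanishes at $k=0$. The $c_0$ Penrose stability condition imposed on $f^0(x,\cdot)$ uniformly in $x$ therefore yields the \emph{$\eps$-uniform} ellipticity $|b(x,\zeta)| \geq c_0$ on $(0,+\infty)\times\R\times\Z^d$.

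Once ellipticity is established, I would construct a parametrix by the standard procedure. From Lemma \ref{lemsymbole} and the simple multiplier structure of $(1+\eps^2|k|^2)^{-1}$, the symbol $\tilde a$, and hence $b$, satisfies the zero-order seminorm bounds $|b|_{M,0}+|b|_{M,1} \leq \Lambda[\|f^0\|_{\Hc^{2m}_{2r}}]$ uniformly in $\eps$ for $M$ up to the threshold fixed by $m_0,r_0$. Because $|b|\geq c_0$ and the derivatives of $b$ are controlled, the symbol $1/b$ is itself a zero-order symbol with seminorms bounded by $\Lambda[\|f^0\|_{\Hc^{2m}_{2r}}]/c_0^N$. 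Invoking the $\gamma$-composition rule from the pseudodifferential section, one gets
\begin{equation*}
Op_{1/b}^\gamma \circ Op_{b}^\gamma = I + Op_{r_\gamma}^\gamma,
\end{equation*}
where the remainder symbol $r_\gamma$ gains one power of $\langle\zeta\rangle^{-1}\leq \gamma^{-1}$. The Calder\'on-Vaillancourt bound with parameter then gives $\|Op_{r_\gamma}^\gamma\|_{L^2\to L^2} \leq C\Lambda[\|f^0\|_{\Hc^{2m}_{2r}}]/\gamma$, so that for $\gamma \geq 2C\Lambda[\|f^0\|_{\Hc^{2m}_{2r}}]$ the operator $I+Op_{r_\gamma}^\gamma$ is invertible on $L^2(\R\times\T^d)$ by a Neumann series, with inverse bounded by $2$.

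Combining these two steps gives $h = (I+Op_{r_\gamma}^\gamma)^{-1}Op_{1/b}^\gamma \Rc$, and a further application of Calder\'on-Vaillancourt to $Op_{1/b}^\gamma$ delivers the claimed estimate
\begin{equation*}
\|h\|_{L^2(\R\times\T^d)} \leq 2\,\|Op_{1/b}^\gamma\|_{L^2\to L^2}\,\|\Rc\|_{L^2(\R\times\T^d)} \lesssim \Lambda[\|f^0\|_{\Hc^{2m}_{2r}}]\,\|\Rc\|_{L^2(\R\times\T^d)}.
\end{equation*}
The main obstacle I expect is not the abstract scheme but checking that the $\eps$-uniformity propagates through every step: the rescaling identity relating $b$ to $\mathcal{P}(\eps\gamma,\eps\tau,\eps k,\cdot)$ is essential, since without it one would only get ellipticity for bounded $\eps|k|$ and the parametrix would degenerate in the quasineutral limit. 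A related subtlety is that Lemma \ref{lemsymbole} gives only finitely many symbol seminorms of $a$, so the parametrix must be arranged as a one-step inversion rather than an asymptotic expansion, and the remainder gain of $1/\gamma$ has to be produced at this first order, which matches exactly what the $\gamma$-calculus provides.
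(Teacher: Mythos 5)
Your proposal is correct and follows essentially the same route as the paper's proof: recognize the equation as $Op^{\eps,\gamma}_{1-b}h=\mathcal R$, identify $1-b$ with the Penrose function so that the stability condition becomes ellipticity of the symbol, and close with a one-step parametrix in the $\gamma$-calculus where the $O(1/\gamma)$ composition error is absorbed for $\gamma$ large. The only cosmetic difference is that you make the rescaling explicit by writing the symbol as $\mathcal{P}(\eps\gamma,\eps\tau,\eps k, f^0(x,\cdot))$ and then apply the plain $\gamma$-calculus of Propositions \ref{propcontinu}--\ref{propcalculus}, whereas the paper keeps the unscaled symbol $b(x,\zeta)=a(x,\zeta)/(1+|k|^2)$ and packages the rescaling into the semiclassical quantization $Op^{\eps,\gamma}$ of Proposition \ref{propsemi}; since $|b^\eps|_{M,j}=|b|_{M,j}$, these are literally the same operators with the same $\eps$-uniform seminorm bounds, so the two presentations are equivalent.
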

     
 \begin{proof}[Proof of Proposition~\ref{proppenrose}]
 By using Lemma \ref{lemKpseudo}, we can write \eqref{eqintegrale3} under the form
 $$ h = Op_{a}^\gamma ( (I - \eps^2 \Delta )^{-1}h) + \mathcal{R}= Op_{b^\eps}^\gamma (h) + \mathcal{R}$$
  with the symbol $b^\eps(x, \zeta)$ defined by 
  $$ b^\eps(x, \zeta)= a(x, \zeta) { 1 \over 1 + \eps^2 |k|^2}.$$
   Note that this is exact since we are composing a pseudodifferential operator with a Fourier multiplier in the right order.
    Since $a$ is homogeneous of degree zero in $\zeta$, we have
    $$ b^\eps(x,\zeta)= b(x, \eps \zeta), \quad b(x, \zeta)=  a(x, \zeta) { 1 \over 1 + |k|^2}$$
     and thus $Op_{b^\eps}^\gamma h =Op_{b}^{\eps, \gamma}$ is a semiclassical pseudodifferential operator
      as defined in the Appendix.
     We thus have to study the equation
  \begin{equation}
  \label{int2}  Op_{1-b}^{\eps, \gamma}(h)= \mathcal{R}.
  \end{equation}
   Thanks to Lemma \ref{lemsymbole}, we  have that  $b \in S_{2m-3, 0} \cap S_{2m-4, 1}$. 
    Moreover,  we observe that
     $$1 - b(x,\gamma, \tau, k)= \mathcal{P}(\gamma, \tau, k,  f^0(x, \cdot))$$
      and consequently, since   $f^0$ satisfies the $c_{0}$ Penrose condition \eqref{Penrose}, 
       we also get that $c= { 1 \over 1 -b}  \in   S_{2m-3, 0} \cap S_{2m-4, 1}.$
     Consequently assuming that $2m> 4+{d\over 2}$, we can find $M>d/2$ such that
     $ c\in S_{M,1}$ and $ 1-b \in S_{M+1, 0}$ and moreover, 
     $$ |c|_{M, 1} + | 1- b|_{M+1, 0} \lesssim \Lambda[\|f^0 \|_{\Hc^{2m}_{2r}}].$$ 
      Consequently, by  applying $Op_{c}^{\eps, \gamma}$ to \eqref{int2} and by using Proposition \ref{propsemi}, we obtain
        that 
        $$ \|h\|_{L^2(\mathbb{R} \times \mathbb{T}^d)} \lesssim   { 1 \over \gamma}\Lambda[\|f^0 \|_{\Hc^{2m}_{n_{0}}}] \|h\|_{L^2(\mathbb{R} \times \mathbb{T}^d)} + \Lambda[\|f^0 \|_{\Hc^{2m}_{n_{0}}}]  \|\mathcal{R}\|_{L^2(\mathbb{R} \times \mathbb{T}^d)}.$$
         The result follows by choosing $\gamma$ sufficiently large. 
   \end{proof}
   As a Corollary, we  get an estimate  for the solution of \eqref{eqintegrale2} on $[0, T]$.
   \begin{corollary}
   \label{coroPen}
    Consider $\tilde h$ the solution of   \eqref{eqintegrale3}, assume that $2m>4+{ d \over 2}$,  that $2r>2+{ d \over 2}$ and that the $c_{0}$ Penrose criterion
     \eqref{Penrose} is satisfied. Then there exists $\Lambda(\cdot,\cdot)$ such that the solution of
    \eqref{eqintegrale2} verifies the estimate
    $$ \| \tilde h \|_{L^2([0, T],  L^2_{x})} \leq  \Lambda[T, \|f^0 \|_{\Hc^{2m}_{2r}}] \|\tilde R\|_{L^2([0, T], L^2_{x})}.$$
  \end{corollary}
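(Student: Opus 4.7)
The plan is to deduce Corollary \ref{coroPen} from Proposition \ref{proppenrose} via the exponential weight transform \eqref{expweight} combined with a trivial extension in time. First, I fix $\gamma \geq \Lambda[\|f^0\|_{\Hc^{2m}_{2r}}]$ as required by the proposition and set $h(t,x) := e^{-\gamma t}\tilde h(t,x)$ and $\mathcal{R}(t,x) := e^{-\gamma t}\tilde R(t,x)$ on $[0,T]$, so that \eqref{eqintegrale2} translates into \eqref{eqintegrale3}.

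Next, I extend both functions to all of $\mathbb{R}_t$: $\mathcal{R}$ by zero outside $[0,T]$, and $h$ by zero on $(-\infty,0)$. The causal structure of $K_{\nabla_v f^0}$ (whose integral runs from $0$ to $t$, and which only probes $h(s)$ for $0\leq s \leq t$) guarantees that the extended functions still satisfy \eqref{eqintegrale3} on $(-\infty,T]$. To obtain an $L^2$ solution on all of $\mathbb{R}$, I use the fact that the proof of Proposition \ref{proppenrose} actually constructs, via the elliptic parametrix $Op_c^{\eps,\gamma}$ with $c = 1/(1-b)$ and the semiclassical calculus of the Appendix, a two-sided bounded inverse of $I - Op_{b^\eps}^\gamma$ on $L^2(\mathbb{R}\times\mathbb{T}^d)$ for $\gamma$ large. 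Applied to the extended $\mathcal{R}$, this inverse yields an $L^2(\mathbb{R}\times\mathbb{T}^d)$ solution $\hat h$, which by the causal (Volterra) structure of the operator must vanish on $(-\infty,0)$ and therefore coincides with our $h$ on $[0,T]$ by Volterra uniqueness (iterating the equation on small successive time slices whose length depends on $\gamma$ but not on $\eps$). Proposition \ref{proppenrose} then yields
$$\|h\|_{L^2([0,T]\times\mathbb{T}^d)} \leq \|\hat h\|_{L^2(\mathbb{R}\times\mathbb{T}^d)} \lesssim \Lambda[\|f^0\|_{\Hc^{2m}_{2r}}]\,\|\mathcal{R}\|_{L^2(\mathbb{R}\times\mathbb{T}^d)}.$$

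Finally, the translation back is immediate: $\|\mathcal{R}\|_{L^2(\mathbb{R}\times\mathbb{T}^d)} \leq \|\tilde R\|_{L^2([0,T],L^2_x)}$ since $\mathcal{R}$ vanishes outside $[0,T]$ and $e^{-\gamma t}\leq 1$ there, while $\|\tilde h\|_{L^2([0,T],L^2_x)} \leq e^{\gamma T}\|h\|_{L^2([0,T]\times\mathbb{T}^d)}$. Combining these with the display above yields the claim with $\Lambda[T,\|f^0\|_{\Hc^{2m}_{2r}}] := e^{\gamma T}\Lambda[\|f^0\|_{\Hc^{2m}_{2r}}]$. The only nontrivial point is the identification $\hat h = h$ on $[0,T]$, a routine Volterra uniqueness check; apart from this, the corollary is essentially a repackaging of Proposition \ref{proppenrose} combined with the exponential weight trick.
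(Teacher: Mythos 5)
Your proof is correct and follows essentially the same route as the paper: apply the exponential weight $e^{-\gamma t}$ from~\eqref{expweight}, extend $\mathcal{R}$ (resp.\ $h$) by zero outside $[0,T]$ (resp.\ on $(-\infty,0)$), invoke Proposition~\ref{proppenrose}, and unwind the weight at the price of a factor $e^{\gamma T}$ with $\gamma=\Lambda[\|f^0\|_{\Hc^{2m}_{2r}}]$. The one place you go beyond the paper is the discussion of the two-sided inverse of $I-Op_{b^\eps}^\gamma$ and the Volterra identification $\hat h=h$ on $[0,T]$: the paper simply declares that extending $h$ by zero in the past ``does not affect the value of $h$ on $[0,T]$'' and then treats Proposition~\ref{proppenrose} as an a priori estimate, implicitly assuming the extended $h$ lies in $L^2(\mathbb{R}\times\mathbb{T}^d)$. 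Your detour supplies that missing point by producing the global $L^2$ solution first via the parametrix and then matching it to the given $\tilde h$ on $[0,T]$, which is a legitimate and arguably more careful way to close the argument, though the final computation ($\|\mathcal{R}\|_{L^2(\mathbb{R}\times\mathbb{T}^d)} \leq \|\tilde R\|_{L^2([0,T],L^2_x)}$ and $\|\tilde h\|_{L^2([0,T],L^2_x)}\leq e^{\gamma T}\|h\|_{L^2}$) is exactly the paper's. Two minor caveats worth flagging: the causality claim for $Op^\gamma_a$ (hence of the constructed $\hat h$ on $(-\infty,0)$) is not a formal Volterra property of the operator as written but rests on the fact that the symbol $a$, being the Laplace transform $\int_0^\infty e^{-(\gamma+i\tau)s}\cdots\,ds$, extends holomorphically and boundedly to $\mathrm{Im}\,\tau<0$, i.e.\ a Paley--Wiener argument; and the ``two-sided inverse'' needs the full pseudodifferential calculus (Proposition~\ref{propsemi}) applied on both sides, not merely the left parametrix used in the paper's one-line a priori estimate. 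Neither point is an error, only places where you should indicate the justification if the argument is to be made fully self-contained.
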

  Note that  the assumption that $2m>4+ {d \over 2}$ is  satisfied if $2m>m_{0}$.
 \begin{proof}[Proof of Corollary~\ref{coroPen}]
  By using \eqref{expweight} and Proposition \ref{proppenrose}, we get that
  $$  \Big(\int_{0}^T  e^{- 2 \gamma t} \| \tilde h(t, \cdot)\|_{L^2(\mathbb{T}^d)}^2  dt \Big)^{1 \over 2}\lesssim   \| h \|_{L^2(\mathbb{R} \times \mathbb{T}^d)}
   \lesssim  \Lambda[ \|f^0 \|_{\Hc^{2m}_{2r}}] \|\mathcal{R} \|_{L^2(\mathbb{R} \times \mathbb{T}^d)}.$$
    Since we have taken $\mathcal{R}$ to be zero for $t \geq T$ and $t <0$, we get by using again \eqref{expweight} that
   $$    \Big(\int_{0}^T  e^{- 2 \gamma t} \| \tilde h(t, \cdot)\|_{L^2(\mathbb{T}^d)}^2  dt \Big)^{1 \over 2}\lesssim   \Lambda[ \|f^0 \|_{\Hc^{2m}_{2r}}] 
    \Big(\int_{0}^T  e^{- 2 \gamma t} \| \tilde R (t, \cdot)\|_{L^2(\mathbb{T}^d)}^2  dt \Big)^{1 \over 2}.$$
     Since  $\gamma$ was chosen as  $\gamma= \Lambda[ \|f^0 \|_{\Hc^{2m}_{2r}}]$, the result follows by taking
     $$
      \Lambda[T, \|f^0 \|_{\Hc^{2m}_{2r}}] := \Lambda[ \|f^0 \|_{\Hc^{2m}_{2r}}] \exp\left( \Lambda[ \|f^0 \|_{\Hc^{2m}_{2r}}] T \right).
     $$ 
 \end{proof}

    \section{Proof of Theorem~\ref{theomain}: conclusion}
    \label{secconclusion}
  We are finally ready to close the bootstrap argument. For $2m>m_{0}$, $2r>r_{0}$, 
  gathering the results of Lemma~\ref{redu}, Lemma~\ref{lem-simplification} and Corollary~\ref{coroPen}, 
  we get that for all $T \in [0, \min (T_{0},  \tilde T_{0}, \hat T_{0},  T^\eps))$, for all $i,j \in \{1,\cdots , d\}^m$,
  $$
  \| \pa_x^{\alpha(i,j)} \rho \|_{L^2([0,T], L^2_x)}  \leq \Lambda(T,M_0)\left( M_{0} +   T^{1\over 2} \Lambda (T, R)\right). 
  $$
  Using Lemma~\ref{lemfacilesans}, we deduce  that
  \begin{align*}
      \Nc_{2m, 2r}(T, f) &\leq M_{0}+   T^{1\over 2} \Lambda (T, R) + \Lambda(T, M_{0}) \left( M_{0} +   T^{1\over 2} \Lambda (T, R)\right).
  \end{align*}
  We choose $R$ large enough so that
  \begin{equation}
  \frac{1}{2} R > M_{0}  + \Lambda[0,M_{0} ] M_0.
  \end{equation}
  Now, $R$ being fixed, we can choose by continuity  $T^\# \in (0,\min (T_{0},  \tilde T_{0}, \hat T_{0}, T^\eps)]$  such that for all $T \in [0,T^\#]$,
    \begin{equation}
     T^{1\over 2} \Lambda (T, R)
     + \Lambda[T, M_{0}]T^{1\over 2} \Lambda (T, R) + (\Lambda[T,M_{0} ] -\Lambda[0,M_{0} ] )M_0<   \frac{1}{2} R.
    \end{equation}    
    This yields  that for all $T \in [0,T^\#]$, it is impossible to have $  \Nc_{2m, 2r}(T, f) =R$.
    Therefore, we deduce that $T^\eps>T^\#$. We have thus proven that
     \begin{equation}
     \label{conclusion}
     \Nc_{2m,\, 2r}(T, f) \leq R,
    \end{equation}
     for some $T>0$ and some $R>0$, both independent of $\eps.$
     To finish the proof of 
      Theorem~\ref{theomain}, it remains to check that the $c_{0}/2$ Penrose stability condition can be ensured.
       From the equation \eqref{VP} and~\eqref{conclusion}, we get that 
       $$
       \| \partial_t f \|_{L^\infty([0,T], \Hc^{2m-2}_{2r-1})} \leq \Lambda(T,R).
           $$
           By using a Taylor expansion, we have that  for all $t \in [0,T]$,
        \begin{multline*}
           \int_{0}^{+ \infty} e^{-(\gamma + i \tau)s}\, {i \eta \over 1 + |\eta|^2}\cdot  ( \mathcal{F}_{v} \nabla_{v}  f )(t,\eta s ) \, d s
           =            \int_{0}^{+ \infty} e^{-(\gamma + i \tau)s}\, {i \eta \over 1 + |\eta|^2}\cdot  ( \mathcal{F}_{v} \nabla_{v}  f^0_\eps)(\eta s ) \, d s  \\+ I
           (\gamma, \tau, \eta, t,x)
           \end{multline*}
           where $I(\gamma, \tau, \eta, x)$ satisfies the uniform estimate
         $$|I(\gamma, \tau, \eta, t,x)| \leq C T\sup_{t \in [0, T]}\| \partial_{t} f\|_{\Hc^2_{\sigma}}$$
          with $\sigma >d/2$.
         Since we have $2m\geq 4$ (by the assumption $2m>m_{0}$ ) and $2r> 1 + {d\over 2}$ this yields
         $$ |I(\gamma, \tau, \eta, t,x)| \leq T 
           \Lambda(T,R).
             $$
Since $f^0_\eps$ satisfies the $c_0$ Penrose stability condition, it follows by  taking a smaller time $T>0$ if necessary, that  for all $t \in [0,T]$ and all $x \in \T^d$, $f(t,x,\cdot)$ satisfies the $c_0/2$ Penrose condition.  
                     
    \section{Proofs of Theorems~\ref{theoquasi} and~\ref{WP}}
    \label{sec23}
     The proofs of Theorems~\ref{theoquasi} and~\ref{WP} will be intertwined  since in order to get the convergence of Theorem \ref{theoquasi}
      without extracting a subsequence, we shall need  the uniqueness part of Theorem \ref{WP}.
        Consequently, we shall first prove the uniqueness part of Theorem \ref{WP}.
         The result will actually be a straightforward consequence of the following  Proposition.
     \begin{proposition}
     \label{proplin}
      We consider the following linear equation:
      \begin{equation}
      \label{eqlinunique}
      \partial_{t}f + v \cdot \nabla_{x} f - \nabla_{x} \rho \cdot \nabla_{v}\overline f + E(t,x) \cdot \nabla_{v} f = F, \quad f_{| t={0}}= f^{0},
       \quad \rho(t,x) = \int_{\mathbb{R}^d} f(t,x,v)\, dv 
      \end{equation}
      where  $E(t,x)$  is a  {\emph{given}} vector field such that for some $T_{0}>0$,  $E \in L^2((0,T_{0}), H^{2m})$ and $\overline f$ is a  \emph{given } function $\overline f(t,x,v) \in L^\infty (0, T_{0}], \Hc^{2m-1}_{2r}) \cap Lip([0, T_{0}], \Hc^{2m-2}_{r- 1})$
       with $2m>m_{0} $, $2r> r_{0}$.
        Let us set
        $$ R:= \|\overline f\|_{L^\infty([0, T_{0}], \Hc^{2m-1}_{2r})} + \|\partial_{t}\overline f \|_{{L^\infty}([0, T_{0}], \Hc^{2m-2}_{2r-1})}
         + \|E\|_{L^2([0, T_{0}], H^{2m- 1})}$$
          and assume that for every $x$, the profile  $\overline f(0, x, \cdot)$ satisfies the $c_{0}$ Penrose stability condition
           for some $c_{0}>0$. Then, there exists $ T=  T(c_{0}, R) \in (0, T_{0}]$ that depends only on $c_{0}$ and $R$ 
            such that  for every $F \in L^2([0, T_{0}], \Hc^0_{r})$  and $g_{0} \in \Hc^0_{r}$, the solution
             $g$ of \eqref{eqlinunique} satisfies the estimate
       \begin{equation}
       \label{estlinunique}
         \| \rho \|_{L^2([0, T] \times \mathbb{T}^d)} \leq \Lambda({ 1 \over c_{0}}, R, T_{0}) \big( \|f^0\|_{\Hc^0_{r}} +
           \|F\|_{L^2([0, T_{0}], \Hc^{0}_{r})}  )
       \end{equation}
       where $\Lambda({ 1 \over c_{0}}, R, T)$ depends only on $c_{0}$, $R$  and $T_{0}$.
     \end{proposition}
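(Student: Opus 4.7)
\smallskip

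The overall strategy mirrors the analysis developed in Sections \ref{secPre2}--\ref{secH2m} for the fully nonlinear problem, but adapted to the linear setting and applied directly at the level of $\rho$ rather than $\partial_{x}^{2m} \rho$. Since we only need an $L^2_{t,x}$ estimate on $\rho$ (not on its derivatives), we can bypass the introduction of the second-order operators $L_{i,j}$ of Lemma~\ref{cons} entirely: the role that they played in controlling high derivatives without loss in $\eps$ is simply irrelevant here, because there is no $\eps$ in the equation and we ask for zero-derivative estimates. What remains essential is the scheme: straighten the transport operator, derive an integral equation for $\rho$ involving the operator $K_{\nabla_v \overline f^0}$, and invert using the Penrose condition together with the semiclassical pseudodifferential calculus.

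More precisely, let $\mathcal{T} = \partial_t + v \cdot \nabla_x + E(t,x) \cdot \nabla_v$ denote the given transport operator. The equation reads $\mathcal{T} f = F + \nabla_x \rho \cdot \nabla_v \overline f$. The first step is to find a smooth $\Phi(t,x,v)$ solving the Burgers equation $\partial_t \Phi + \Phi \cdot \nabla_x \Phi = E$ with $\Phi|_{t=0} = v$, and to establish Sobolev and $L^\infty$ estimates on $\Phi - v$ and $\partial_t \Phi$ that mirror those of Lemma~\ref{lemburgers}. Because $E$ is now just a datum in $L^2([0,T_0], H^{2m})$ rather than a self-consistent field, these estimates are actually easier and follow by the same energy arguments; small time will be needed so that $\nabla_v \Phi$ stays close to the identity and the associated change of variables and its inverse $\Psi$ are controlled uniformly.

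Setting $g(t,x,v) := f(t,x,\Phi(t,x,v))$, we get as in Lemma~\ref{lemburg} that
\[
\partial_t g + \Phi \cdot \nabla_x g = F(t,x,\Phi) + \nabla_x \rho(t,x) \cdot (\nabla_v \overline f)(t,x,\Phi).
\]
Solving along the characteristics of $\partial_t + \Phi \cdot \nabla_x$ (for which the velocity is only a parameter), multiplying by $|\det \nabla_v \Phi|$, integrating in $v$, and applying the change of variable $v \mapsto \Psi(s,t,x,v)$ as in Lemma~\ref{redu}, one obtains an integral equation of the form
\[
\rho(t,x) \,=\, K_{H}(\rho)(t,x) + \mathcal{R}(t,x),
\]
where $H(t,s,x,v)$ is constructed from $\nabla_v \overline f$, the Jacobians and the flow, and where $\mathcal{R}$ collects the contributions of $f^0$ (through the free-transport-like piece) and of $F$ (through the Duhamel integral). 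The remainder estimate $\|\mathcal{R}\|_{L^2([0,T]\times\mathbb{T}^d)} \lesssim \Lambda(R,T_0) \bigl(\|f^0\|_{\Hc^0_r} + \|F\|_{L^2([0,T_0],\Hc^0_r)}\bigr)$ follows from the same change-of-variable arguments used in Lemma~\ref{redu}, which require only $L^\infty$ type information on the flow and on the weights, available under the assumed Sobolev control on $E$ and $\overline f$.

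The heart of the proof is then to replace the kernel $H$ by its value $H(0,0,x,v) = \nabla_v \overline f(0,x,v) = \nabla_v \overline f^0(x,v)$ at $t=s=0$, up to a remainder that is small in small time. Writing $H(t,s,x,v) = \nabla_v \overline f^0(x,v) + \bigl(H(t,s,x,v) - \nabla_v \overline f^0(x,v)\bigr)$ and applying Proposition~\ref{propint} with the kernel-norm estimates of Remark~\ref{remarknormekernel}, the difference contributes $O(T^{1/2})$ in operator norm on $L^2_{t,x}$; here the Lipschitz-in-time assumption $\partial_t \overline f \in L^\infty([0,T_0], \Hc^{2m-2}_{2r-1})$ is precisely what ensures $\sup_{t,s}\|\partial_s H\|_{\Hc^p_\sigma}$ is bounded. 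Once we are reduced to $\rho = K_{\nabla_v \overline f^0}(\rho) + \tilde{\mathcal{R}}$, the $c_0$ Penrose stability condition on $\overline f^0$ and Corollary~\ref{coroPen} (note that in the Vlasov-Dirac-Benney limit $\eps=0$, the operator $(I-\eps^2\Delta)^{-1}$ is just the identity, which makes the pseudodifferential analysis of Section~\ref{secH2m} even more direct) yield $\|\rho\|_{L^2([0,T]\times \mathbb{T}^d)} \leq \Lambda(1/c_0,R,T_0)\, \|\tilde{\mathcal{R}}\|_{L^2([0,T]\times\mathbb{T}^d)}$, and absorbing the $O(T^{1/2})$ error on the left for $T$ small enough gives \eqref{estlinunique}. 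The main obstacle is bookkeeping: one must carry the weights $(1+|v|^2)^{r/2}$ consistently through the change of variables and verify that the regularity available on $\overline f$ (one derivative less than in the nonlinear setting) is still enough to put the $H$-kernel in the space required by Proposition~\ref{propint}, which is exactly where the thresholds $2m>m_0$, $2r>r_0$ come into play.
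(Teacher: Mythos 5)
Your proposal follows the paper's strategy quite closely up to the final pseudodifferential inversion, and you correctly identify the major simplifications (no $L_{i,j}$ operators needed, the Lipschitz-in-time assumption on $\overline f$ is used precisely so that $\partial_s H$ is under control in Proposition~\ref{propint}, absorption of the $O(T^{1/2})$ error). However, there is a genuine gap in the last step, and moreover your parenthetical remark that ``$\eps=0$ makes the pseudodifferential analysis even more direct'' points in exactly the wrong direction.

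The issue is the following. For $\eps>0$ one ends up inverting $Op^{\gamma}_{1-b}$ with $b(x,\zeta)=a(x,\zeta)/(1+|k|^2)$, and one has the \emph{exact} identity $1-b(x,\gamma,\tau,k)=\mathcal{P}(\gamma,\tau,k,f^0(x,\cdot))$; the Penrose condition \eqref{Penrose} therefore directly reads $|1-b|\geq c_0$, giving ellipticity for free. In the linear problem \eqref{eqlinunique} there is no $(I-\eps^2\Delta)^{-1}$ at all, so after straightening and Taylor-expanding the kernel one is left with $\rho=K_{\nabla_v\overline f^0}\rho+\tilde{\mathcal{R}}$, which after the $e^{\gamma t}$ rescaling and Lemma~\ref{lemKpseudo} becomes $Op^{\gamma}_{1-a}h=\mathcal{R}$. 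Here the symbol to invert is $1-a$, where $a$ has no $1/(1+|k|^2)$ factor. The Penrose condition says $|1-\tfrac{a}{1+|k|^2}|\geq c_0$, which is \emph{not} the same as $|1-a|\geq c_0$, and Corollary~\ref{coroPen} cannot be invoked at $\eps=0$ (its proof relies on the semiclassical formalism $Op^{\eps,\gamma}$, which degenerates when $\eps=0$: the rescaled symbol $a^\eps(x,\zeta)=a(x,\eps\zeta)$ becomes meaningless). The missing step is to show that $|1-a(x,\zeta)|\geq c_0$, and this requires using the homogeneity of $a$ of degree zero together with a limiting/continuity argument: writing $\zeta=\sigma\tilde\zeta$ with $\tilde\zeta$ on the sphere, $\mathcal{P}(\sigma\tilde\zeta,\overline f^0(x,\cdot))=1-\frac{a(x,\tilde\zeta)}{1+\sigma^2|\tilde k|^2}$, which converges to $1-a(x,\tilde\zeta)$ as $\sigma\to0^+$. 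Since $|\mathcal{P}|\geq c_0$ for $\sigma>0$ and the expression extends continuously to $\sigma=0$ (this is where $\overline f^0\in\Hc^2_r$ is used), one deduces $|1-a|\geq c_0$ on the sphere and then everywhere by homogeneity. Without this argument your proof is incomplete: you have not established the ellipticity that allows you to build the left parametrix $Op^\gamma_{1/(1-a)}$ and conclude via Propositions~\ref{propcontinu} and~\ref{propcalculus}.

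Once that ellipticity is in place, the rest of your argument goes through as you describe, and the structure matches the paper's proof.
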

     \begin{proof}[Proof of Proposition~\ref{proplin}]
      The proof follows closely the analysis of equation \eqref{eq-fij} in the proof  of Theorem \ref{theomain} so that we shall
       only give the main steps.
        We first  set $g(t, x, \Phi(t,x,v))= f(t,x,v)$ with $\Phi$ being the solution of the Burgers equation \eqref{eq-Burgers}
         with initial data $\Phi(0,x,v)= v$, recall Lemma~\ref{lemburgers}. Because of the regularity assumptions on $E$, Lemma \ref{lemburgers}
          is still valid: such a smooth $\Phi$ exists on $[0, T(R)]$ for some $T(R)>0$ and verifies the estimates  \eqref{estim-Burgers-infini},  \eqref{estim-Burgers}. We observe that $g$ solves
        \begin{equation}
        \label{equniqueg} \partial_{t} g + \Phi \cdot \nabla_{x} g - \nabla_{x} \rho(t,x) \cdot \nabla_{v} \overline f (t,x, \Phi) = F(t,x, \Phi)
        \end{equation}
             and that
           $$   \int_{\mathbb{R}^d}  g(t,x,v) J(t,x,v) \, dv= \rho(t,x)$$
            with $J(t,x,v ) = | \det \nabla_{v} \Phi(t,x,v)|$.
            To  solve \eqref{equniqueg}, we use the characteristics \eqref{characteristic}. Because of the previous estimates
             on $\Phi$, the estimates of Lemma \ref{redressement2} are still valid.
              Proceeding as in the proof of Lemma \ref{redu}, we can first obtain that
           \begin{align*}
            \rho(t,x) =  K_{H} \rho 
            &+ \int_{0}^t \int_{\mathbb{R}^d} F(s, X(s,t, x,v), \Phi(s,x,v)) J(t,x,v) \, dvds \\
            &+ \int_{\mathbb{R}^d} f^0(X(0, t, x, v),v) J(t,x,v) \, dv
            \end{align*}
             where
            $$  H  (t,s,x,v)=   ( \nabla_{v}\overline f)(s,  x- (t-s)v,\Psi(t,s,x,v)) J(t,x,\Psi(s,t,x,v)) \tilde J(t,s,x,v)$$ 
     and $\tilde{J}(s,t,x,v)= | \det \nabla_{v}\Psi(s,t,x,v) |$.
      Again, by Taylor expanding $H$ in time and by  using  Proposition \ref{propint} and remark \ref{remarknormekernel} we obtain that
     $$ \rho(t,x)= K_{\nabla_{v} \overline f^0} \rho +\tilde{\mathcal{R}}$$ 
     with the notation $\overline{f}^0(x,v)= \overline{f}(0, x,v)$ and
     where $\tilde{\mathcal{R}}$ is such that 
     \begin{equation}   
      \label{Runique} \| \tilde{\mathcal{R}} \|_{L^2([0, T], L^2)} \leq  \Lambda(T, R) ( \|f^0\|_{\Hc^0_{r}} + \|F\|_{\Hc^0_{r}} + T^{1 \over 2} \|\rho\|_{L^2([0, T], L^2})\end{equation}
       for every  $T \in [0, T(R)].$
  In order to estimate the solution of  the previous equation,  we can again set $\rho= e^{\gamma t} h$, 
   $\tilde{ \mathcal{R}}= e^{\gamma t } \mathcal{R},$   assume that $h$ 
   and $\mathcal{R}$ are zero for $t<0$  and that $\mathcal{R}$ is continued by zero for $t>T$. Then by using  lemma
    \ref{lemKpseudo}, we end up with the equation
    $$ h= Op_{a}^\gamma h + \mathcal{R}$$
    where $a$ is still defined by \eqref{adef} with $f^0$ replaced by $\overline{f}^0$. Because of the regularity assumptions on $\overline f$, 
     the estimates of Lemma \ref{lemsymbole} are still verified. 
        If $ a$  has the property  that
       \begin{equation}
       \label{Penrosebis} |1- a(x, \zeta)| \geq c_{0}, \quad \forall \zeta= (\gamma, \tau, k),\,  \quad \gamma>0, \, \tau \in \mathbb{R}, \, 
         k \in \mathbb{R}^d\backslash\{0\},
         \end{equation}
          then we can apply the operator $Op^\gamma_{{ 1 \over 1 - a}}$ and  use Proposition \ref{propcontinu} and Proposition \ref{propcalculus} to get that for $\gamma$ sufficiently large, we have
          $$ \|h\|_{L^2_{t,x}} \leq \Lambda( {1 \over c_{0}}, R) \| \mathcal{R}\|_{L^2_{t,x}}.$$
           In view of \eqref{Runique}, this yields
            that for every $ T \in [0, T(R)]$ we have
            $$ \| \rho \|_{L^2([0, T], L^2)} \leq \Lambda({ 1 \over c_{0}}, T, R) \bigl( 
           \|f^0\|_{\Hc^0_{r}} + \|F\|_{\Hc^0_{r}} + T^{1 \over 2} \|\rho\|_{L^2([0, T],L^2)} \bigr).$$
            Consequently, if $T$ is sufficiently small we get the estimate \eqref{estlinunique}.
            
            In order to finish the proof, we thus only have to check that the estimate \eqref{Penrosebis}
             is verified.  Let us recall that by definition of  the Penrose stability condition, we have  that for every $x \in \mathbb{T}^d$,
              the function
  $$ \mathcal{P}(\gamma, \tau, \eta,  \overline f^0(x, \cdot))= 1 -  \int_{0}^{+ \infty} e^{-(\gamma + i \tau)s}\, \frac{i \eta}{1+  |\eta|^2} \cdot  ( \mathcal{F}_{v} \nabla_{v}  \overline{f}^0)(x, \eta s ) \, d s, \quad \gamma >0,\, \tau \in \mathbb{R}, \,\eta \in \mathbb{R}^d\backslash\{0\}$$
  verifies
 $$ \inf_{(\gamma, \tau, \eta) \in [0, + \infty) \times \mathbb{R} \times \mathbb{R}^d} |\mathcal{P}(\gamma, \tau, \eta, \overline f^0(x, \cdot))|\geq c_{0}.$$
  Let us then define using polar coordinates the function $\tilde{ \mathcal  P}$ by
  $$ \tilde{ \mathcal P}(\tilde \gamma, \tilde \tau, \tilde \eta, \sigma, \overline f^0(x,\cdot))= 
    \mathcal{P}( \sigma\tilde \gamma, \sigma \tilde  \tau,  \sigma \tilde  \eta, \overline{f}^0(x,\cdot)), \quad (\gamma, \tau, \eta)= \sigma(\tilde \gamma, \tilde \tau, \tilde \eta), \, r>0, 
     \, \tilde \gamma >0, \, (\tilde \gamma, \tilde \tau, \tilde \eta) \in S_{+}$$
     where $S_+ = \{(\tilde \gamma, \tilde \tau, \tilde \eta), \, \tilde \gamma^2 + \tilde \tau^2 +  \tilde \eta^2= 1, \, \tilde \gamma > 0, 
      \tilde \eta \neq 0 \}$.
      Note that we have
      $$   \tilde{ \mathcal P}(\tilde \gamma, \tilde \tau, \tilde \eta, \sigma,  \overline f^0)= 
      1 -  \int_{0}^{+ \infty} e^{-(\tilde \gamma + i \tilde  \tau)s}\, \frac{i  \tilde \eta}{1+  \sigma^2 |\tilde \eta|^2} \cdot  ( \mathcal{F}_{v} \nabla_{v}  \overline f^0)(x, \tilde \eta s ) \, d s.
 $$    
 If $\overline f^{0} \in \Hc_{r}^2$, the function  
 $\tilde{\mathcal P}$ can be extended as a continuous function on $  S_{+} \times [0+\infty[$.
  The   Penrose  stability condition thus implies $\tilde{ \mathcal  P} \geq c_{0}$   on   $S_{+} \times [0, + \infty[$. 
   In particular for $\sigma=0$, we observe that
   $$ \tilde{ \mathcal P}(\tilde \gamma, \tilde \tau, \tilde \eta, 0, \mathbf f)=  1 -a(\tilde \gamma, \tilde \tau, \tilde \eta).$$
     We thus obtain that  $|1- a| \geq c_{0}$ on $S_{+}$. Since $a$ is homogeneous of degree zero, this yields
      that \eqref{Penrosebis} is verified. This ends the proof.
    \end{proof}  
      As an immediate corollary of the previous proposition, we get an uniqueness property for the limit equation
       \eqref{formel}.
     \begin{corollary}
  \label{coruniq}
  Let $f_1, f_2 \in \mathcal{C}([0, T], \Hc^{2m-1}_{2r})$ with  $2m>m_{0}$, $2r>r_{0}$   be two solutions of \eqref{formel} with the same initial condition $f^0$. 
  Setting $\rho_i := \int f_i \, dv$, we assume that  $ \rho_1, \rho_2 \in L^2([0, T], H^{2m})$. 
  Assume that furthermore, there is $c_0>0$ such that $f_1$ is such that $v \mapsto f_1(t,x,v)$ satisfies  the $c_{0}$ Penrose condition  for every $t\in [0, T]$ and $x \in \mathbb{T}^d$. Then  we have that $f_1 =f_2$ on $ [0, T] \times \mathbb{T}^d \times \mathbb{R}^d$.
  \end{corollary}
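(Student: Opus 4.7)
The plan is to reduce the uniqueness claim to an application of Proposition~\ref{proplin}. Set $f := f_1 - f_2$ and $\rho := \rho_1 - \rho_2 = \int f\, dv$. Subtracting the equations satisfied by $f_1$ and $f_2$ and adding/subtracting $\nabla_x \rho_2 \cdot \nabla_v f$, one checks that $f$ solves
\[
\partial_t f + v \cdot \nabla_x f - \nabla_x \rho \cdot \nabla_v f_1 - \nabla_x \rho_2 \cdot \nabla_v f = 0, \qquad f\vert_{t=0} = 0,
\]
which is exactly equation \eqref{eqlinunique} with $\overline f = f_1$, $E = -\nabla_x \rho_2$, $F = 0$ and vanishing initial data.

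The next task will be to verify the assumptions of Proposition~\ref{proplin}. The space regularity $f_1 \in L^\infty([0,T],\Hc^{2m-1}_{2r})$ is given, and $\nabla_x \rho_2 \in L^2([0,T],H^{2m-1})$ follows from $\rho_2 \in L^2([0,T],H^{2m})$. To control $\partial_t f_1$, I will use the Vlasov--Dirac--Benney equation $\partial_t f_1 = -v \cdot \nabla_x f_1 + \nabla_x \rho_1 \cdot \nabla_v f_1$, combined with the product estimate of Lemma~\ref{lemprod} and the embedding $\rho_1 \in L^\infty([0,T],H^{2m-1})$ (valid since $2r > d$ allows integration of $f_1$ in $v$); this gives $\partial_t f_1 \in L^\infty([0,T],\Hc^{2m-2}_{2r-1})$ as required. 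The Penrose condition on $\overline f(0,x,\cdot) = f^0(x,\cdot)$ holds with constant $c_0$ by hypothesis. Proposition~\ref{proplin} then yields a time $T^\star = T^\star(c_0, R) \in (0,T]$ such that
\[
\| \rho \|_{L^2([0,T^\star]\times \mathbb{T}^d)} \le \Lambda(c_0, R, T)\bigl(\|0\|_{\Hc^0_r} + \|0\|_{L^2([0,T],\Hc^0_r)}\bigr) = 0,
\]
so $\rho \equiv 0$ on $[0,T^\star]$. With $\rho = 0$, the equation for $f$ collapses to the pure transport equation $\partial_t f + v \cdot \nabla_x f - \nabla_x \rho_2 \cdot \nabla_v f = 0$ with zero initial data, whose vector field is Lipschitz on $\mathbb{T}^d \times \mathbb{R}^d$ thanks to Sobolev embedding applied to $\nabla_x \rho_2 \in H^{2m-1}$, so classical uniqueness for transport gives $f \equiv 0$ on $[0,T^\star]$.

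To extend the conclusion to the full interval $[0,T]$, I will iterate: at the restart time $t_0 = T^\star$ one has $f_1(t_0,\cdot) = f_2(t_0,\cdot)$, and by hypothesis $f_1(t_0, x, \cdot)$ still satisfies the $c_0$ Penrose condition for every $x$. Since the norms entering $R$ (for $f_1$, $\partial_t f_1$ and $\nabla_x \rho_2$) are bounded uniformly on $[0,T]$, the step length $T^\star$ produced by Proposition~\ref{proplin} is the same at each restart; finitely many iterations therefore cover $[0,T]$ and yield $f_1 \equiv f_2$. The one step that requires care, rather than being purely mechanical, is the verification that $\partial_t f_1$ lies in $L^\infty_t\Hc^{2m-2}_{2r-1}$: this is where one must trade a derivative in $x$ for a power of $v$ in the $v \cdot \nabla_x f_1$ term, and use Lemma~\ref{lemprod} on the nonlinear term $\nabla_x \rho_1 \cdot \nabla_v f_1$ together with the loss-free bound $\rho_1 \in L^\infty_t H^{2m-1}_x$. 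Everything else is a direct plug-in into Proposition~\ref{proplin} and a standard iteration.
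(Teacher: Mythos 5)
Your proposal is correct and takes essentially the same route as the paper: write $f=f_1-f_2$, observe that it solves the linear problem of Proposition~\ref{proplin} with $\overline f=f_1$, $E=-\nabla_x\rho_2$, $F=0$ and zero data, deduce $\rho\equiv 0$ and then $f\equiv 0$ on a short interval, and iterate using the fact that $f_1$ satisfies the $c_0$ Penrose condition at every $t\in[0,T]$ and that $R$ is controlled uniformly on $[0,T]$. The extra detail you give — checking $\partial_t f_1\in L^\infty_t\Hc^{2m-2}_{2r-1}$ via the equation, Lemma~\ref{lemprod}, and the bound $\rho_1\in L^\infty_t H^{2m-1}_x$ — spells out a step the paper only states.
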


  \begin{proof}[Proof of Corollary~\ref{coruniq}]
  Let 
  $$R= \max_{i=1,2} \left(\| f_i \|_{L^\infty([0, T], \Hc^{2m-1}_{2r})} + \| \rho_i \|_{L^2([0, T], H^{2m})} \right).$$
  
  We set $f=f_1-f_2$, and observe that $f$ satisfies the equation
  \begin{equation}
  \label{eqdiff}
  \partial_t f + v \cdot \na_x f - \na_x \rho  \cdot \na_v f_{1}  - \na_x \rho_2 \cdot \na_v f  =0, \quad f|_{t=0}=0,
  \end{equation}
  where $\rho  := \int f\, dv$. 
   We are thus in the framework of Proposition \ref{proplin} with $ E= - \nabla_{x} \rho_{2}$, 
    $\overline{f}= f_{1}$ and zero data (that it say $F= 0$ and zero initial data).
     Moreover, we observe that  thanks to the equation \eqref{formel}, we also have that
     $$  \| \partial_{t} f_{i} \|_{L^\infty([0, T], \Hc^{2m-2}_{2r- 1})} \leq \Lambda(T,R).$$
      We are thus in the framework of Proposition \ref{proplin}. From \eqref{estlinunique}, we deduce that there exists $T({c_{0} }, R)$
       such that $\rho= 0$ in $[0, T({c_{0} },R)]$. This yields that on $[0, T({c_{0} }, R)]$, $f$ satisfies
        the homogeneous transport equation
         $$    \partial_t f + v \cdot \na_x f   - \na_x \rho_2 \cdot \na_v f  =0$$
         with zero initial data and thus $f= 0$ on $[0, T({c_{0} }, R)]$.
         We can then apply again  Proposition \ref{proplin} starting from $T({c_{0} }, R)$ (which is valid since $f_{1}(T({c_{0} }, R), \cdot)$
         still   satisfies the $c_{0}$ Penrose stability condition). 
          Since the estimate \eqref{estlinunique}
          is valid on an interval of time that depends only on $R$ and $c_{0}$, we then obtain that
           $f= 0$ on $[0, 2T({c_{0} }, R)]$. Repeating the argument, we finally obtain after a finite number of steps that
            $f= 0$ on $[0, T]$. This ends the proof.

     \end{proof}

    \subsection{Proof of Theorem~\ref{theoquasi} }
     We start by applying Theorem~\ref{theomain} to get $T,R>0$ independent of $\eps$, such that $f_\eps \in \mathcal{C}([0, T], \Hc^{2m}_{2r})$  satisfies \eqref{VP} with
 $$\sup_{\eps \in (0, \eps_0]} \Nc_{2m, 2r}(T,f_\eps) \leq R.$$

We can now use standard compactness arguments to justify the quasineutral limit:  $f_\eps$ is uniformy bounded in $\mathcal{C}([0,T],  \Hc^{2m-1}_{2n_{0}})$ and from  \eqref{VP}, we get that $\partial_t f_\eps$ is uniformly bounded in 
$L^\infty([0,T],  \Hc^{2m-2}_{2r-1})$. 
 Consequently, by Ascoli Theorem there exists $f \in \mathcal{C}([0, T], L^2)$ and a sequence $\eps_{n}$
  such that $f_{\eps_{n}}$ converges to $f$ in   $\mathcal{C}([0, T], L^2_{x,v})$. By interpolation, we also actually
   have convergence in $ \mathcal{C}([0, T], \Hc^{2m-1- \delta}_{2r- \delta})$ for every $\delta >0$.
    By Sobolev embedding  this yields in particular that $f_{\eps_{n}}$ converges to $f$ in $L^\infty([0, T] \times \mathbb{T}^d
     \times \mathbb{R}^d)$ and that $\rho_{\eps}$ converges to $\rho= \int_{\R^d} f\, dv$
      in $L^2([0, T], L^2) \cap L^\infty([0, T]\times \mathbb{T}^d).$
       From these strong convergences, we easily obtain that $f$ is solution of \eqref{formel}
      and that $f$ satisfies the $c_{0}/2$ Penrose stability condition on $[0, T]$.
     Moreover, by standard weak-compactness arguments, we also easily obtain that
     $ f\in L^\infty([0, T], \Hc^{2m-1}_{2r}) \cap \mathcal{C}_{w}([0, T], \Hc^{2m-1}_{2r}) $
      (that is to say continuous in time with    $\Hc^{2m-1}_{2r}$ equipped with the weak topology) and that $\rho \in L^2([0, T], H^{2m})$.  With this regularity of $\rho$, we can then deduce  by standard arguments from the energy estimate for \eqref{formel}
      (which is just \eqref{energieformel} with $E= -  \nabla_{x} \rho$) that we actually infer that 
      $ f\in \mathcal{C}([0, T], \Hc^{2m-1}_{2r})$. 
            
      Thanks to the uniqueness for \eqref{formel} proved in Corollary \ref{coruniq}, we can get by standard arguments
       that we actually have the full convergence of $f_{\eps}$ to $f$ and not only the subsequence $f_{\eps_{n}}$.
       
         \subsection{Proof of Theorem~\ref{WP}}
     With the choice $f^0_\eps = f_0$ for all $\eps \in (0,1]$, Theorem~\ref{theoquasi}  provides 
      the existence part. The uniqueness is a consequence of Corollary~\ref{coruniq} and the fact that $f$ satisfies the $c_{0}/2$  Penrose stability condition \eqref{Penrose}
   for every $t\in [0, T]$ and $x \in \mathbb{T}^d$.

  \section{Pseudodifferential calculus with parameter}
  \label{sectionpseudo}
   In this section we shall prove the basic results about pseudodifferential calculus that we need in our proof. For more complete statements and results, 
   we refer for example to \cite{Metivier}, \cite{Metivier-Zumbrun}.
     We consider symbols $a(x, \gamma, \tau, k)$  on $\mathbb{T}^d \times ]0,+ \infty[ \times \mathbb{R} \times \mathbb{R}^d\backslash\{0\}$, $\gamma >0 $ has to be thought to as a parameter.
      We set  $\zeta= (\gamma, \tau, k)$ and $\xi= (\tau, k) \in \mathbb{R} \times \mathbb{R}^d\backslash\{0\} $.
   Note that we do not need to include a  dependence on the time variable $t$ in our symbols (so that we actually consider Fourier multipliers
    in  the time variable).
     We use the quantification
     $$ (Op_{a}^\gamma) u(t,x)=  (2 \pi)^{- ( d+ 1)} \int_{\mathbb{Z}^d \times \mathbb{R}} e^{i (\tau t + k \cdot x)} a(x, \zeta) \hat{u}(\xi) \, d\xi$$
      where $d \xi = dk d \tau$ and the measure on $\mathbb{Z}^d$ is the discrete measure. The Fourier transform $\hat u$ is defined as
      $$ \hat u(\xi)= \int_{\mathbb{T}^d \times \mathbb{R}} e^{-i( \tau t +  k \cdot x)} u(t,x) dt dx.$$
     We introduce the following seminorms of symbols:
     \begin{align}
   \label{pseudo0}  &  |a|_{M,0}=   \sup_{| \alpha | \leq  M } \| \mathcal{F}_{x}(\partial_{x}^\alpha a) \|_{L^2(\mathbb{Z}^d, L^\infty_{\zeta})}, \\
   \label{pseudo1}  &  |a|_{M,1}=  \sup_{| \alpha | \leq  M } \|  \langle \zeta \rangle  \mathcal{F}_{x} (\partial_{x}^\alpha \nabla_{\xi} a)\|_{L^2(\mathbb{Z}^d, 
      L^\infty_{\zeta})}.
     \end{align}
     where
     $$\langle \zeta \rangle  = (\gamma^2 + \tau^2 + |k|^2)^{1 \over 2}.$$
      We shall say that $a\in S_{M,0}$ if $|a|_{M,0}<+\infty$ and $a \in S_{M,1}$ if  $ |a|_{M,1}<+\infty.$
       The use of these seminorms compared to some more classical  ones   will allow us to avoid to lose too many derivatives
       while keeping very simple proofs.
       
        Note that we can  easily relate  $ |a|_{M,0}$ to  more classical symbol  seminorms up to loosing more derivatives.
         For example, we have for every $M \geq 0$
             $$  \sup_{| \alpha | \leq  M } \sup_{x,  \,  \zeta} |\partial_x^\alpha a(x, \zeta)|
             \lesssim  | a|_{M+ s, 0}$$
              with $s>d/2$.
              The following results refine slightly in terms of the regularity of the symbols, the classical results of $L^2$
               continuity for symbols in $S^0_{0,0}$ that are compactly supported in $x$, see for example \cite{Taylor}. 
     \begin{proposition}
     \label{propcontinu}
      Assume that $M> d/2$ and  that $a\in S_{M, 0}$.
      Then, there exists $C>0$ such that for every $\gamma > 0$
      $$ \|Op_{a}^\gamma u\|_{L^2(\mathbb{R} \times \mathbb{T}^d)} \leq C |a|_{M,0} \|u\|_{L^2(\mathbb{R} \times \mathbb{T}^d)}.$$
     \end{proposition}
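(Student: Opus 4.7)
The plan is the standard reduction of $L^2$ boundedness of a pseudodifferential operator to a Schur-type estimate, exploiting the Fourier series expansion of the symbol in the $x$ variable. The only subtle point is to check that the seminorm $|a|_{M,0}$ is indeed the right quantity to appear, and that the argument is uniform in $\gamma > 0$.

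First, I would expand $a(x,\zeta)$ in Fourier series in $x$,
\[
a(x,\zeta) = (2\pi)^{-d} \sum_{l \in \mathbb{Z}^d} e^{i l \cdot x} (\mathcal{F}_x a)(l,\zeta),
\]
insert this in the definition of $Op_a^\gamma u$ and swap the sum with the integral in $\xi = (\tau,k)$. Performing the change of variable $k' = k+l$ and taking the Fourier transform in $(t,x)$, I expect to obtain the kernel identity
\[
\mathcal{F}(Op_a^\gamma u)(\tau, k') = (2\pi)^{-d} \sum_{l \in \mathbb{Z}^d} (\mathcal{F}_x a)(l,\gamma,\tau,k'-l)\, \hat{u}(\tau, k'-l).
\]

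Next, for each fixed $(\tau, k')$, I would apply a weighted Cauchy--Schwarz inequality with weight $(1+|l|)^{\pm M}$,
\[
\Bigl|\sum_{l} a_l \Bigr|^2 \leq \Bigl(\sum_{l} (1+|l|)^{-2M}\Bigr)\Bigl(\sum_{l} (1+|l|)^{2M} |a_l|^2\Bigr),
\]
where the first factor is finite precisely because $M>d/2$, yielding a constant $C_M$ that depends only on $M$. Then I would integrate in $k'$ (discrete sum over $\mathbb{Z}^d$), exchange the order of summation and, for each $l$, change variable $k = k'-l$, which gives
\[
\int_{\mathbb{Z}^d} |\mathcal{F}(Op_a^\gamma u)(\tau, k')|^2 dk' \leq C_M \sum_{l} (1+|l|)^{2M} \sup_\zeta |(\mathcal{F}_x a)(l,\zeta)|^2 \int_{\mathbb{Z}^d} |\hat{u}(\tau,k)|^2 dk,
\]
the supremum over $\zeta$ absorbing in particular the dependence on $\gamma$. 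Integrating in $\tau$ and invoking Parseval on both sides, together with the equivalence
\[
\sum_{l} (1+|l|)^{2M} \sup_\zeta |(\mathcal{F}_x a)(l,\zeta)|^2 \asymp |a|_{M,0}^2
\]
(which follows from $\mathcal{F}_x(\partial_x^\alpha a)(l,\zeta) = (il)^\alpha (\mathcal{F}_x a)(l,\zeta)$ and $\sum_{|\alpha|\leq M} l^{2\alpha}\asymp (1+|l|)^{2M}$), delivers the claim.

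The main ``obstacle'' is really just bookkeeping: tracking the $(2\pi)$-normalizations in the mixed $\mathbb{R}\times\mathbb{T}^d$ Fourier transform, and making sure that the $\sup_\zeta$ (taken over all $\gamma > 0$ as well) is enough to pull out the $l$-dependent piece before the convolution/change of variable step. Once this is in place, the entire argument is uniform in $\gamma$ since at no point do we use its size.
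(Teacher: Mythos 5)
Your argument is correct and essentially matches the paper's proof: both pass to the Fourier side, exploit the convolution structure in the $x$-frequency, and use a Schur-type estimate together with the weight $(1+|l|)^{\pm M}$ (with $M>d/2$) to control the resulting sum by the seminorm $|a|_{M,0}$. The only cosmetic difference is that you fold the weighted Cauchy--Schwarz directly into the convolution sum, whereas the paper first derives $\|Op_a^\gamma u\|_{L^2} \lesssim \|\mathcal{F}_x a\|_{L^1(\mathbb{Z}^d, L^\infty_\zeta)} \|u\|_{L^2}$ via Schur's test and then applies the same weighted Cauchy--Schwarz to bound $\|\mathcal{F}_x a\|_{L^1(\mathbb{Z}^d, L^\infty_\zeta)} \lesssim |a|_{M,0}$.
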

    \begin{proof}[Proof of Proposition~\ref{propcontinu}]
    In the following $\lesssim$ means $\leq C$ with $C$ that does not depend on $\gamma$.
     We can write
     \begin{align*} 
     Op_{a}^\gamma  u(t,x) & =   (2 \pi)^{- ( d+ 1)}\int_{\mathbb{Z}^d}  e^{i x \cdot k'} \Big(  \int_{\mathbb{Z}^d \times \mathbb{R}} e^{i (\tau t + k \cdot x)} \mathcal{F}_{x}a(k', \zeta) \hat{u}(\xi) \, d\xi \Big) d k'
     \\
      & =  (2 \pi)^{- ( d+ 1)}  \int_{\mathbb{Z}^d \times \mathbb{R}} e^{i (\tau t + l \cdot x)} \Big(
      \int_{\mathbb{Z}^d} \mathcal{F}_{x}a(k- l, \gamma, \tau, k) \hat{u}(\tau, k) dk  \Big) d\tau dl
      \end{align*}
      and hence we obtain from the Bessel identity that  
     $$ \|Op_{a}^\gamma u \|_{L^2(\mathbb{R} \times \mathbb{T}^d)} \lesssim 
     \left\|  \left\| \int_{\mathbb{Z}^d} \mathcal{F}_{x}a(k- \cdot , \gamma, \tau, k) \hat{u}(\tau, k) dk \right\|_{L^2(\mathbb{Z}^d)} \right\|_{L^2(\mathbb{R}_{\tau})}.$$
      By using Cauchy-Schwarz and Fubini, we get in a classical way that 
    \begin{align*}
    &   \left\| \int_{\mathbb{Z}^d} \mathcal{F}_{x}a(k- \cdot , \gamma, \tau, k) \hat{u}(\tau, k) dk \right\|_{L^2(\mathbb{Z}^d)}^2 
       \\
       & \lesssim   \| \sup_{k}| \mathcal{F}_{x}a(\cdot, \gamma, \tau, k)|\,\|_{L^1(\mathbb{Z}^d)}
        \int_{\mathbb{Z}^d \times \mathbb{Z}^d}  | \mathcal{F}_{x}a(k-l  , \gamma, \tau, k)|\,  |\hat{u}(\tau, k)|^2 dk dl \\
         & \lesssim    \|  \sup_{k} |\mathcal{F}_{x}a(\cdot, \gamma, \tau, k)|\,\|_{L^1(\mathbb{Z}^d)}
          \sup_{k} \|\mathcal{F}_{x}a(\cdot, \gamma, \tau, k)\|_{L^1(\mathbb{Z}^d)} \|\hat u(\tau, \cdot )\|_{L^2(\mathbb{Z}^d)}^2 \\
       & \lesssim  \| \sup_{k}| \mathcal{F}_{x}a(\cdot, \gamma, \tau, k)|\,\|_{L^1(\mathbb{Z}^d)}^2
        \|\hat u(\tau, \cdot) \|_{L^2(\mathbb{Z}^d)}^2.
        \end{align*}
        By integrating in time, we thus obtain that
      $$  \|Op_{a}^\gamma u \|_{L^2(\mathbb{R} \times \mathbb{T}^d)} \lesssim   \|\mathcal{F}_{x} a  \|_{L^1(\mathbb{Z}^d, L^\infty_{\zeta})}
       \|u\|_{L^2(\mathbb{R} \times \mathbb{T}^d)}.$$
        To conclude, it suffices to notice that
        $$  \|\mathcal{F}_{x} a  \|_{L^1(\mathbb{Z}^d, L^\infty_{\zeta})} \lesssim |a|_{M,0}$$
         for $M>d/2$.

    \end{proof}


    We shall now state a result of symbolic calculus.
  \begin{proposition}
  Assume that $ a \in S_{M,1}$ and that   $b\in S_{M+1, 0}$ with $M>d/2$. Then there exists $C>0$ such that for every $\gamma>0$, we have
  \label{propcalculus}
   $$ \|Op_{a}^\gamma Op_{b}^\gamma (u) - Op_{ab}^\gamma (u)\| \leq {C \over \gamma} |a|_{M, 1} |b|_{M+1, 0}\, \|u \|_{L^2(\mathbb{R} \times \mathbb{T}^d)}.$$
  \end{proposition}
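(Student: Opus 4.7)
The key observation is that since the symbols $a$ and $b$ do not depend on $t$, the dual variable $\tau$ is merely a parameter under composition. For each fixed $(\gamma,\tau)$, let $A_\tau, B_\tau, (AB)_\tau$ denote the pseudodifferential operators on $\mathbb{T}^d$ with symbols $a(x,\gamma,\tau,\cdot)$, $b(x,\gamma,\tau,\cdot)$, $(ab)(x,\gamma,\tau,\cdot)$ respectively; then $Op_a^\gamma Op_b^\gamma$ acts fibrewise as $A_\tau B_\tau$ on $\mathcal{F}_t u(\tau,\cdot)$. By Plancherel in $t$, it suffices to establish
$$
\sup_\tau \| A_\tau B_\tau - (AB)_\tau \|_{L^2(\mathbb{T}^d)\to L^2(\mathbb{T}^d)} \leq \frac{C}{\gamma}\,|a|_{M,1}\,|b|_{M+1,0},
$$
and integrate the resulting bound over $\tau$.

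Writing both compositions in Fourier series in $x$ on $\mathbb{T}^d$ and matching summation indices (via the change $l\mapsto k'-k$ in the expansion of $\widehat{ab}$), one finds that the kernel of $A_\tau B_\tau - (AB)_\tau$ in the Fourier basis is
$$
K(k'',k) = \sum_{k'}\mathcal{F}_x b(k'-k,\gamma,\tau,k)\,\bigl[\mathcal{F}_x a(k''-k',\gamma,\tau,k')-\mathcal{F}_x a(k''-k',\gamma,\tau,k)\bigr].
$$
A Taylor expansion of the bracket in the $k$-variable introduces a factor $(k'-k)\cdot\int_0^1 \mathcal{F}_x(\nabla_\xi a)(k''-k',\gamma,\tau,k+s(k'-k))\,ds$. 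The factor $(k'-k)$ combines with $\mathcal{F}_x b(k'-k,\gamma,\tau,k)$ to produce (up to a factor of $i$) $\mathcal{F}_x(\nabla_x b)(k'-k,\gamma,\tau,k)$, while the remaining $\nabla_\xi$ factor satisfies $|\mathcal{F}_x(\nabla_\xi a)(\cdot,\zeta)| \leq F(\cdot)/\langle\zeta\rangle \leq F(\cdot)/\gamma$ with $F(l):=\sup_\zeta |\langle\zeta\rangle\,\mathcal{F}_x(\nabla_\xi a)(l,\zeta)|$. Setting $G(m):=|m|\sup_\zeta|\mathcal{F}_x b(m,\zeta)|$, we arrive at
$$
|K(k'',k)| \leq \frac{1}{\gamma}\,(G\ast F)(k''-k)
$$
as a discrete convolution on $\mathbb{Z}^d$.

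The hypothesis $M>d/2$ is precisely what allows one to deduce $\|G\|_{\ell^1}\lesssim|b|_{M+1,0}$ and $\|F\|_{\ell^1}\lesssim|a|_{M,1}$: Cauchy--Schwarz against $\|(1+|l|)^{-M}\|_{\ell^2(\mathbb{Z}^d)}<\infty$ converts the $\ell^2((1+|l|)^{2M})$-bounds built into the seminorms (which come from the $\partial_x^\alpha$, $|\alpha|\le M$, in the definition) into $\ell^1$-bounds. Since $\|G\ast F\|_{\ell^1}\leq\|G\|_{\ell^1}\|F\|_{\ell^1}$, both Schur conditions $\sup_{k''}\sum_k|K|$ and $\sup_k\sum_{k''}|K|$ are controlled by $\gamma^{-1}\|G\|_{\ell^1}\|F\|_{\ell^1}$, and Schur's test yields the fibrewise $L^2(\mathbb{T}^d)$-bound uniformly in $\tau$. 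Integration in $\tau$ then completes the proof.

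The only (mild) obstacle is the bookkeeping involved in writing the kernel of $Op_a^\gamma Op_b^\gamma$ and the kernel of $Op_{ab}^\gamma$ in a common form: they arise naturally from different discrete convolutions, and one must perform a change of summation index to bring the difference into the above shape where the Taylor expansion in $k$ becomes transparent. Everything else is routine discrete harmonic analysis with the Sobolev embedding $\ell^2((1+|l|)^{2M})\hookrightarrow\ell^1(\mathbb{Z}^d)$ as the only analytic ingredient beyond Schur's lemma.
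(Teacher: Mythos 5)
Your proof is correct and is essentially the paper's argument, merely with the scaffolding collapsed: after the same Taylor expansion in $k$ and the same use of $\langle\zeta\rangle\ge\gamma$, you apply Schur's test directly to the kernel of the difference, whereas the paper first shows the error symbol $d=\gamma(c-ab)$ lies in $S_{M,0}$ with $|d|_{M,0}\lesssim|a|_{M,1}|b|_{M+1,0}$ and then invokes Proposition~\ref{propcontinu} (whose proof is itself a fibrewise-in-$\tau$ Schur estimate). In both, $M>d/2$ enters only through the Cauchy--Schwarz step $\ell^2((1+|l|)^{2M})\hookrightarrow\ell^1$.
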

  Note that the above estimate is especially useful for large $\gamma$ since  the right hand side can be made small
       by taking $\gamma$ sufficiently large.

  \begin{proof}[Proof of Proposition~\ref{propcalculus}]
   Note that  for $a \in S_{M, 0}$, $b\in S_{M,0}$ and $M>d/2$, we have  by elementary convolution estimates that 
   $$ | ab |_{M,0}
    \lesssim  |a|_{M, 0} \|\mathcal{F}_{x}b\|_{L^1(\mathbb{Z}^d, L^\infty_{\zeta})} + |b|_{M, 0} \|\mathcal{F}_{x}a\|_{L^1(\mathbb{Z}^d, L^\infty_{\zeta})} \lesssim |a|_{M, 0}|b|_{M, 0}$$
         and  thus  that $ ab \in S_{M,0}$. This yields that $Op^\gamma_{ab}$ is a well-defined continuous operator on $L^2$
      thanks to Proposition \ref{propcontinu}.
       Next, using the usual formulas for pseudodifferential operators, we find that
       $$ Op_{a}^\gamma Op_{b}^\gamma = Op_{c}^\gamma$$
        with $c$ given by
       $$ c(x, \zeta)= \int_{\mathbb{Z}^d}  e^{i k' \cdot x} a(x,\gamma,  \tau, k+ k') \mathcal{F}_{x}b(k', \zeta) \, dk', \quad \zeta= (\gamma, \tau, k).$$
        We thus get that
       $$ c(x, \zeta) - a(x, \zeta) b(x, \zeta)=  \int_{\mathbb{Z}^d}  e^{i k' \cdot x} \int_{0}^1 \nabla_{k} a(x, \gamma, \tau, k+ sk')\, ds\cdot k' \,   \mathcal{F}_{x}b(k', \zeta) \, dk': = {1 \over \gamma}d(x, \zeta).$$
        By using Proposition \ref{propcontinu}, we can just  prove that $ d \in S_{M,0}$ for $M>d/2$ and estimate its norm.
           By taking the Fourier transform in $x$, we obtain that
        $$ (\mathcal F_{x} \partial_{x}^\alpha d) (l, \gamma, \tau, k)
        =  \gamma \int_{0}^1 \int_{\mathbb{Z}^d} (il)^\alpha ( \mathcal{F}_{x} \nabla_{k} a)(l-k', \gamma, \tau, k+ sk') \cdot k' \mathcal{F}_{x} b(k', \zeta)\, dk' ds.$$
         This yields
\begin{multline*} \|(\mathcal F_{x} \partial_{x}^\alpha d) (l, \gamma,\cdot)\|_{L^\infty_{\zeta}} 
  \lesssim   \int_{\mathbb{Z}^d} |l-k'|^{|\alpha|}   \left\| \,|(\gamma, \cdot)| ( \mathcal{F}_{x} \nabla_{k} a)(l-k', \gamma, \cdot) \right\|_{L^\infty_{\zeta}} 
   |k'| \left\|  \mathcal{F}_{x} b(k', \gamma, \cdot)\right\|_{L^\infty_{\zeta}}  \, dk'
    \\+  \int_{\mathbb{Z}^d}    \left\|\, |(\gamma, \cdot)|( \mathcal{F}_{x} \nabla_{k} a)(l-k', \gamma, \cdot)\right \|_{L^\infty_{\zeta}} \,
   |k'|^{| \alpha |+ 1}\left\|  \mathcal{F}_{x} b(k', \gamma, \cdot)\right\|_{L^\infty_{\zeta}}  \, dk'.
   \end{multline*}
   From standard convolution estimates, we  obtain that
   $$ | d|_{M,0}
    \lesssim \big(  |  a |_{M, 1}\, \| |k|\mathcal{F}_{x} \nabla_{x} b \|_{L^1(\mathbb{Z}^d, L^\infty_{\zeta})}
     + | b|_{M+1, 0} \, \left\| \,|(\gamma, \cdot)| \mathcal{F}_{x} \nabla_{k} a\right\|_{L^1(\mathbb{Z}^d, L^\infty_{\zeta})}\big)$$
      and thus, for $M>d/2$, we finally get that
      $$   |d|_{M,0} \lesssim   | a|_{M, 1} |b|_{M+1, 0}.$$
        From 
       Proposition \ref{propcontinu}, we get that
       $$ \|Op^\gamma_{d} u \|_{L^2(\mathbb{R} \times \mathbb{T}^d)} \lesssim   |a|_{M, 1} |b|_{M+1, 0} \|u\|_{L^2(\mathbb{R} \times \mathbb{T}^d)}.$$
        Since by definition of $d$, we have $ Op^\gamma_{a} Op^{\gamma}_b - Op^\gamma_{ab}= { 1 \over \gamma} Op^\gamma_{d}$, the result follows.

  \end{proof}
  
  We shall finally define a semiclassical version of the above calculus.
     For any symbol $a(x, \zeta)$ as above,  we set for $\eps \in (0, 1]$,  $a^\eps(x, \zeta)= a(x, \eps \zeta)= a(x, \eps \gamma, \eps \tau, \eps k)$ and we define for $\gamma \geq 1$, 
 \begin{equation}
 \label{pseudosemi}
 (Op^{\eps, \gamma}_{a} u)(t,x)=
  (Op^{ \gamma}_{a^\eps} u)(t,x).
 \end{equation}   
 For this calculus, we have the following result:
 \begin{proposition}
 \label{propsemi}
  There exists $C>0$ such that for every $ \eps \in (0, 1]$ and for every $\gamma \geq 1$ we have 
  \begin{itemize}
  \item 
    for every  $a \in S_{M,0}$  with  $M>d/ 2$,
  $$ \|Op_{a}^{\eps, \gamma} u\|_{L^2(\mathbb{R} \times \mathbb{T}^d)} \leq C |a|_{M,0} \|u\|_{L^2(\mathbb{R} \times \mathbb{T}^d)}, $$
   \item for every $a \in  S_{M, 1}$ and for every $b \in S_{M+1, 0}$, with $M>d/2$,
  $$  \|Op_{a}^{\eps, \gamma} Op_{b}^{\eps, \gamma} (u) - Op_{ab}^{\eps, \gamma} (u)\| \leq {C \over \gamma} |a|_{M, 1} |b|_{M+1, 0}\, \|u \|_{L^2(\mathbb{R} \times \mathbb{T}^d)}.$$
  
  \end{itemize}
 
 \end{proposition}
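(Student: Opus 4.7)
The plan is to reduce both estimates to the already-established non-semiclassical statements (Proposition~\ref{propcontinu} and Proposition~\ref{propcalculus}) applied to the rescaled symbols $a^\eps(x,\zeta):=a(x,\eps\zeta)$ and $b^\eps(x,\zeta):=b(x,\eps\zeta)$. By the very definition \eqref{pseudosemi}, $Op^{\eps,\gamma}_a = Op^{\gamma}_{a^\eps}$, so if we show that the seminorms $|a^\eps|_{M,0}$ and $|a^\eps|_{M,1}$ are bounded uniformly in $\eps \in (0,1]$ by $|a|_{M,0}$ and $|a|_{M,1}$ respectively, both bullet points follow immediately.

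The key observation is the scaling identity $\langle \eps \zeta \rangle = \eps \langle \zeta \rangle$, valid on $(0,+\infty)\times\mathbb{R}\times\mathbb{R}^d\setminus\{0\}$ since $\langle \zeta\rangle$ is the Euclidean norm of $\zeta$. First, for $|\alpha|\leq M$, we have $\mathcal{F}_x(\partial_x^\alpha a^\eps)(l,\zeta)=\mathcal{F}_x(\partial_x^\alpha a)(l,\eps\zeta)$, so taking the $L^\infty_\zeta$ norm and using that $\zeta\mapsto \eps\zeta$ is a bijection of the parameter domain onto itself gives
\[
 \|\mathcal{F}_x(\partial_x^\alpha a^\eps)(l,\cdot)\|_{L^\infty_\zeta} \leq \|\mathcal{F}_x(\partial_x^\alpha a)(l,\cdot)\|_{L^\infty_\zeta},
\]
hence $|a^\eps|_{M,0}\leq |a|_{M,0}$. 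Second, $\partial_x^\alpha\nabla_\xi a^\eps(x,\zeta)=\eps(\partial_x^\alpha\nabla_\xi a)(x,\eps\zeta)$, and combining this with $\langle \zeta\rangle\cdot\eps=\langle\eps\zeta\rangle$ yields
\[
 \langle\zeta\rangle\,\mathcal{F}_x(\partial_x^\alpha\nabla_\xi a^\eps)(l,\zeta) = \langle\eps\zeta\rangle\,\mathcal{F}_x(\partial_x^\alpha\nabla_\xi a)(l,\eps\zeta),
\]
so the $L^\infty_\zeta$ norm on the left equals the $L^\infty_{\zeta'}$ norm of $\langle\zeta'\rangle\mathcal{F}_x(\partial_x^\alpha\nabla_\xi a)(l,\zeta')$ after the change of variable $\zeta'=\eps\zeta$. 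This gives $|a^\eps|_{M,1}\leq |a|_{M,1}$.

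Once these two uniform bounds are in hand, the first bullet is just Proposition~\ref{propcontinu} applied to $a^\eps$: $\|Op^{\eps,\gamma}_a u\|_{L^2}=\|Op^\gamma_{a^\eps}u\|_{L^2}\leq C|a^\eps|_{M,0}\|u\|_{L^2}\leq C|a|_{M,0}\|u\|_{L^2}$. For the second bullet, I note that $(ab)^\eps(x,\zeta)=a(x,\eps\zeta)b(x,\eps\zeta)=a^\eps(x,\zeta)b^\eps(x,\zeta)$, so $Op^{\eps,\gamma}_{ab}=Op^\gamma_{(ab)^\eps}=Op^\gamma_{a^\eps b^\eps}$. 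Proposition~\ref{propcalculus} applied to the pair $(a^\eps,b^\eps)$ (noting that $\gamma\geq 1$ preserves all hypotheses) then yields
\[
 \|Op^\gamma_{a^\eps}Op^\gamma_{b^\eps}u - Op^\gamma_{a^\eps b^\eps}u\|_{L^2} \leq \frac{C}{\gamma}|a^\eps|_{M,1}|b^\eps|_{M+1,0}\|u\|_{L^2},
\]
and the uniform control of the rescaled seminorms by $|a|_{M,1}|b|_{M+1,0}$ concludes the proof.

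There is really no substantive obstacle here: the whole point of choosing the seminorms \eqref{pseudo0}--\eqref{pseudo1} with the $\langle\zeta\rangle$-weight in $|\cdot|_{M,1}$ is precisely to make them invariant under the dilation $\zeta\mapsto\eps\zeta$, so the semiclassical parameter $\eps$ gets absorbed for free. The mild point to keep track of is that the seminorm bounds are uniform with constants independent of $\eps$, which follows from the invariance (not merely a harmless inequality) of $|\cdot|_{M,0}$ and $|\cdot|_{M,1}$ under this rescaling.
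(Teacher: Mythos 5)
Your proof is correct and follows the same route as the paper: reduce to Propositions~\ref{propcontinu} and~\ref{propcalculus} via $Op^{\eps,\gamma}_a=Op^\gamma_{a^\eps}$ and the scaling invariance $|a^\eps|_{M,0}=|a|_{M,0}$, $|a^\eps|_{M,1}=|a|_{M,1}$. The paper simply states the seminorm identities without spelling out the dilation invariance of the sup over $\zeta$ and the homogeneity $\langle\eps\zeta\rangle=\eps\langle\zeta\rangle$, which you correctly supply.
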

 
 \begin{proof}[Proof of Proposition~\ref{pseudosemi}]
  The proof is a direct consequence of Proposition \ref{propcontinu} and  Proposition \ref{propcalculus}
   since for any symbol $a$, we have by definition of $a^\eps$  that  
   $$ |a^\eps|_{M, 0}= |a|_{M,0}, \quad  |a^\eps |_{M, 1}= |a|_{M,1}.$$
 
 \end{proof}
 
 \bigskip
 {\bf Acknowledgments.} We thank David G\'erard-Varet for very fruitful discussions that initiated this work.
  This work was partially supported  by the ANR projects  BoND (ANR-13-BS01-0009-01) and DYFICOLTI  (ANR-13-BS01-0003-01).
  

  \end{document}